\documentclass[12pt,a4paper,reqno]{amsart}
\usepackage[utf8]{inputenc}
\usepackage[T1]{fontenc}
\usepackage{indentfirst}
\usepackage[english]{babel}
\usepackage{mathpazo}
\usepackage{enumitem}
\usepackage{csquotes}
\usepackage{amsmath,amsfonts,amssymb,amsopn,amscd,amsthm}
\usepackage[alphabetic]{amsrefs}
\usepackage{mathrsfs,shuffle,bbm}
\usepackage{stmaryrd}
\usepackage{graphicx}
\usepackage[dvipsnames]{xcolor}
\usepackage[colorlinks=true,citecolor=DarkOrchid,linkcolor=NavyBlue]{hyperref}
\usepackage{pgfplots}
\usepackage{tikz}
\usetikzlibrary{patterns,arrows.meta}
\pgfplotsset{compat=1.17}
\usepackage[a4paper,twoside,margin=0.8in]{geometry}
\synctex=1

    \newtheorem*{maintheorem}{Main Theorem}
    \newtheorem{definition}{Definition}
    \newtheorem{lemma}[definition]{Lemma}
    
    \newtheorem{theorem}[definition]{Theorem}
    \newtheorem{proposition}[definition]{Proposition}
    \newtheorem{corollary}[definition]{Corollary}
    \newtheorem*{problem}{Problem}
    \newtheorem*{conjecture}{Conjecture}
    \theoremstyle{remark}
    \newtheorem{example}[definition]{Example}
    \newtheorem{remark}[definition]{Remark}

\newcommand{\N}{\mathbb{N}}
\newcommand{\Z}{\mathbb{Z}}

\newcommand{\R}{\mathbb{R}}
\newcommand{\C}{\mathbb{C}}
\newcommand{\Disk}{\mathbb{D}}

\newcommand{\E}{\mathrm{e}}
\newcommand{\I}{\mathrm{i}}
\newcommand{\esper}{\mathbb{E}}
\newcommand{\var}{\mathrm{var}}
\newcommand{\cov}{\mathrm{cov}}
\newcommand{\proba}{\mathbb{P}}

\newcommand{\leb}{\mathscr{L}}
\newcommand{\card}{\mathrm{card}}

\newcommand{\eps}{\varepsilon}
\newcommand{\tr}{\mathrm{tr}}
\newcommand{\id}{\mathrm{id}}
\newcommand{\wt}{\mathrm{wt}}
\newcommand{\mwt}{\overline{\mathrm{wt}}}
\newcommand{\ST}{\mathrm{ST}}
\newcommand{\diag}{\mathrm{diag}}
\newcommand{\Gspa}{\mathscr{G}}
\newcommand{\Gset}{\mathbf{G}}

\newcommand{\Hset}{\mathbf{L}\mathbf{H}}
\newcommand{\obs}{\mathcal{O}_\Gset}
\newcommand{\lle}{\left[\!\left[} 
\newcommand{\rre}{\right]\!\right]}
\newcommand{\sym}{\mathfrak{S}}
\newcommand{\pym}{\mathfrak{P}}
\newcommand{\dir}{_{\mathrm{d}}}
\newcommand{\Spec}{\mathrm{Spec}}
\newcommand{\fredholm}{\mathrm{det}_2}
\newcommand{\ffredholm}{\mathrm{det}_3}
\newcommand{\edgegraph}{\begin{tikzpicture}[baseline=-1mm,scale=0.5]
\foreach \x in {(0,0),(1,0)}
\fill \x circle (4pt);
\draw (0,0) -- (1,0);
\end{tikzpicture}}
\newcommand{\DD}[1]{\,d\hspace{-0.3mm}{#1}}
\newcommand{\scal}[2]{\left\langle #1\vphantom{#2}\,\right |\left.#2 \vphantom{#1}\right\rangle}
\newcommand{\comment}[1]{}
\newcommand{\join}{\bowtie}
\renewcommand{\Re}{\mathrm{Re}}

\setlist[enumerate]{itemsep=10pt,topsep=10pt}
\setlist[itemize]{itemsep=5pt,topsep=5pt}

\title{A central limit theorem for singular graphons}
\author{Pierre-Loïc Méliot}
\date{\today}


\begin{document}

\begin{abstract}
We associate to a graphon $\gamma$ the sequence of $W$-random graphs $(G_n(\gamma))_{n \geq 1}$. We say that the graphon is \emph{singular} if, for any finite graph $F$, the homomorphism density $t(F,G_n(\gamma))$ has a variance of order $O(n^{-2})$. This behavior is singular because generically, the density of a fixed finite graph $F$ in a $W$-random graph has a variance of order $O(n^{-1})$. We conjecture that the only singular graphons are the constant graphons $\gamma_p$ with $p \in [0,1]$, corresponding to the Erd\H{o}s--Rényi random graphs $G(n,p)$. In this paper, we investigate the general properties of the singular graphons, and we show that they share many properties with the Erd\H{o}s--Rényi random graphs. In particular, if $\gamma$ is a singular graphon, then the scaled densities $n(t(F,G_n(\gamma))-\esper[t(F,G_n(\gamma))])$ converge in joint distribution. This generalises the central limit theorem satisfied by the Erd\H{o}s--Rényi random graphs $G(n,p)$; however, the limiting distribution might be non-Gaussian if the conjecture does not hold. We also establish an equation satisfied by the characteristic polynomial of the Laplacian of the graph $G_n(\gamma)$ associated to a singular graphon; this opens the way to a spectral approach of the conjecture.
\end{abstract}

\maketitle

\hrule
\tableofcontents
\hrule

\clearpage

\section{The Gaussian moduli space of graphons}
Throughout the paper, \emph{unless stated explicitly}, by \emph{graph} we mean a finite, simple and unoriented graph $G=(V_G,E_G)$, with $V_G$ finite set, and $E_G$ subset of the set of pairs $\{x,y\}$ with $x,y \in V$ and $x \neq y$.
The \emph{size} of a graph is its number of vertices $k=|V_G|$.

\subsection{Graph functions and graphons}
The space of graphons is a compact metric space which enables the parametrisation of sequences of graphs which are convergent with respect to the notion of left-convergence. We follow here \cites{LS06,BCLSV08} for the presentation of the main properties of this space, which can be defined as a quotient of the set of graph functions; see also the monography \cite{Lov12}. A \emph{graph function} is a Borel measurable function $g : [0,1]^2 \to [0,1]$, which we consider as an element of the Lebesgue space $\leb^\infty([0,1]^2,\DD{x}\DD{y})$ (hence, it is defined up to a set with zero Lebesgue measure), and which is symmetric:
$$g(x,y) = g(y,x) \quad \text{almost everywhere}.$$
We endow $\leb^\infty([0,1]^2)$ with the norm
$$\|w\|_\oblong = \sup_{S,T \subset [0,1]} \left|\int_{S \times T} w(x,y)\DD{x}\DD{y}\right|,$$
where the supremum runs over pairs of measurable subsets of $[0,1]$. On the other hand, we call \emph{Lebesgue isomorphism} a bijective measurable map $\sigma : [0,1] \to [0,1]$ which preserves the Lebesgue measure. The Lebesgue isomorphisms act on the right of $\leb^\infty([0,1]^2)$ by:
$$g^\sigma(x,y) = g(\sigma(x),\sigma(y)).$$
The \emph{cut-metric} between two graph functions $g_1$ and $g_2$ is
$$\delta_\oblong(g_1,g_2) = \inf_{\sigma} \|g_1^\sigma-g_2\|_\oblong,$$
where the infimum runs over Lebesgue isomorphisms $\sigma : [0,1] \to [0,1]$.
Two graph functions $g_1$ and $g_2$ are considered equivalent if $\delta_\oblong(g_1,g_2)=0$, and we call \emph{graphon} an equivalence class $\gamma = [g]$ of graph functions for this relation.  We denote $\Gspa$ the set of graphons; the cut-metric induces a distance on this space, which makes it a compact metric space \cite{LS07}*{Theorem 5.1}.\medskip

Given two graphs $F$ and $G$, we call morphism from $F$ to $G$ a map $\phi : V_F \to V_G$ such that if $\{x,y\} \in E_F$, then $\{\phi(x),\phi(y)\} \in E_G$. The \emph{homomorphism density} of $F$ in $G$ is defined by
$$t(F,G) = \frac{|\hom(F,G)|}{|V_G|^{|V_F|}},$$
where $\hom(F,G)$ denotes the set of morphisms from $F$ to $G$. On the other hand, consider a finite graph $F$ on $k$ vertices and a graphon $\gamma$ represented by a graph function $g$. We label the vertices of $F$ by the integers in $\lle1,k\rre$, and we define the \emph{density} of $F$ in $\gamma$ by
$$t(F,\gamma) = \int_{[0,1]^k} \left(\prod_{\{i,j\} \in E_F} g(x_i,x_j)\right)\DD{x_1}\cdots \DD{x_k};$$
this quantity does not depend on the choice of a representative $g$ of the graphon $\gamma$. Given a graph $G$ on $n \geq 1$ vertices, by drawing its adjacency matrix as a step function $g_G : [0,1]^2 \to \{0,1\}$ which is constant on each small square $[\frac{i-1}{n},\frac{i}{n}] \times [\frac{j-1}{n},\frac{j}{n}]$, one obtains a graph function $g_G$ and a graphon $\gamma_G = [g_G]$ (see Figure \ref{fig:graph_function}). Then, the two definitions above correspond: $t(F,\gamma_G) = t(F,G)$ for any graphs $F,G$.

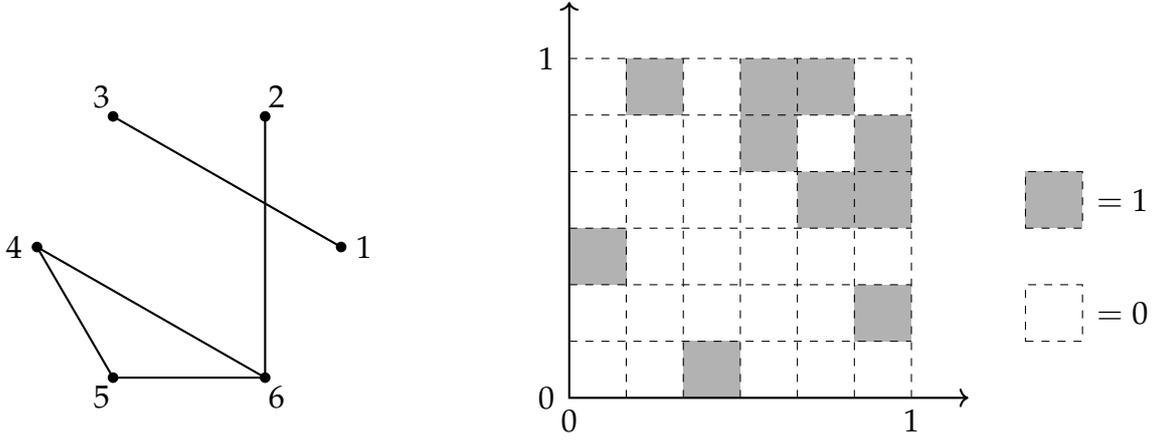
\begin{figure}[ht]
\begin{center}        
\begin{tikzpicture}[scale=1]
\foreach \x in {0,60,120,180,240,300}
\fill (\x:2) circle (2pt);
\draw [thick] (0:2) -- (120:2);
\draw (0:2.3) node {$1$};
\draw (60:2.3) node {$2$};
\draw (120:2.3) node {$3$};
\draw (180:2.3) node {$4$};
\draw (240:2.3) node {$5$};
\draw (300:2.3) node {$6$};
\draw [thick] (60:2) -- (300:2) -- (240:2) -- (180:2) -- (300:2);
\begin{scope}[shift={(5,-2)},scale=0.75]
\fill [white!70!black] (0,2) rectangle (1,3);
\fill [white!70!black] (2,0) rectangle (3,1);
\fill [white!70!black] (1,5) rectangle (2,6);
\fill [white!70!black] (5,1) rectangle (6,2);
\fill [white!70!black] (4,3) -- (6,3) -- (6,5) -- (5,5) -- (5,4) -- (4,4) -- (4,3);
\fill [white!70!black] (3,4) -- (4,4) -- (4,5) -- (5,5) -- (5,6) -- (3,6) -- (3,4); 
\foreach \x in {1,2,3,4,5,6}
{\draw [dashed] (\x,0) -- (\x,6); \draw [dashed] (0,\x) -- (6,\x);}
\draw (0,-0.4) node {$0$};
\draw (6,-0.4) node {$1$};
\draw (-0.4,0) node {$0$};
\draw (-0.4,6) node {$1$};
\fill [white!70!black] (8,3) rectangle (9,4);
\draw [dashed] (8,3) rectangle (9,4);
\draw (9.7,3.5) node {$=1$};
\draw [dashed] (8,1) rectangle (9,2);
\draw (9.7,1.5) node {$=0$};
\draw [thick,<->] (7,0) -- (0,0) -- (0,7);
\end{scope}
\end{tikzpicture}
\caption{The graph function $g_G$ associated to a graph $G$.}\label{fig:graph_function}
\end{center}
\end{figure}

\begin{example}
Suppose that $$F = \begin{tikzpicture}[scale=0.7,baseline=3mm]
\foreach \x in {(0,0),(0,1),(1,0),(1,1)}
\fill \x circle (3pt);
\draw (0,0) rectangle (1,1);
\draw (-0.3,-0.3) node {$1$};
\draw (-0.3,1.3) node {$4$};
\draw (1.3,1.3) node {$3$};
\draw (1.3,-0.3) node {$2$};
\end{tikzpicture}\,.$$
The corresponding density in graphons is $$\int_{[0,1]^4} g(x_1,x_2)g(x_2,x_3)g(x_3,x_4)g(x_4,x_1)\DD{x_1}\DD{x_2}\DD{x_3}\DD{x_4}.$$ Note that this quantity does not depend on the labelling of the vertices of $F$; more generally, the function $t(F,\cdot)$ only depends on the isomorphism type of the graph $F$, so for instance it makes sense to simply write $t(\begin{tikzpicture}[baseline=0.5mm,scale=0.3]
\foreach \x in {(0,0),(0,1),(1,0),(1,1)}
\fill \x circle (5pt);
\draw (0,0) rectangle (1,1);
\end{tikzpicture},\cdot)$. If $G$ is another finite graph on $n$ vertices, then $t(\begin{tikzpicture}[baseline=0.5mm,scale=0.3]
\foreach \x in {(0,0),(0,1),(1,0),(1,1)}
\fill \x circle (5pt);
\draw (0,0) rectangle (1,1);
\end{tikzpicture},G)$ is $\frac{1}{n^4}$ times the number of labelled squares that are subgraphs of $G$.
\end{example}

It can be showed that the topology induced by $\delta_\oblong$ on $\Gspa$ is equivalent to the topology of convergence of all the observables $t(F,\cdot)$: a sequence of graphons $(\gamma_n)_{n \in \N}$ converges with respect to $\delta_\oblong$ to a graphon $\gamma$ if and only if $t(F,\gamma_n)$ converges to $t(F,\gamma)$ for any finite graph $F$ \cite{BCLSV08}*{Theorem 3.8}. In terms of graphs, a sequence of graphs $(G_n)_{n\in \N}$ is said \emph{left-convergent} if $t(F,G_n)$ admits a limit for any finite graph $F$. This is equivalent to the existence of a graphon $\gamma$ such that $\gamma_{G_n} \to \gamma$ in $\Gspa$ \cite{LS06}*{Theorem 2.2}. Therefore, the space of graphons $\Gspa$ is the adequate space in order to understand the notion of left-convergence for sequences $(G_n)_{n \in \N}$ of dense graphs.
\medskip

\subsection{Random graphs associated to graphons}
We have explained above how to associate to any finite simple graph $G$ a graphon $\gamma_G$. An important tool in the theory of graphons is the following reverse construction: one can associate to any graphon $\gamma$ a sequence $(G_n(\gamma))_{n \geq 1}$ of random graphs with $|V_{G_n(\gamma)}|=n$ for any $n \geq 1$. Consider two families $(X_i)_{i \geq 1}$ and $(U_{ij})_{j>i\geq 1}$ of independent random variables uniformly distributed in $[0,1]$. We also fix a representative $g$ of the graphon $\gamma$. The \emph{$W$-random graph} $G_n(\gamma)$ is the graph with vertex set $\lle 1,n\rre$, and with an edge between $i$ and $j$ if and only if
$$U_{ij} \leq g(X_i,X_j).$$
Hence, conditionally to the random vector $(X_1,\ldots,X_n)$, the entries $1_{(i\sim j)}$ of the adjacency matrix of $G_n(\gamma)$ are independent Bernoulli variables with parameters $g(X_i,X_j)$. It is easily seen that the law of the random graph $G_n(\gamma)$ on the set of vertices $V=\lle 1,n\rre$ does not depend on the choice of a representative $g$ of the graphon $\gamma$. Besides, an immediate computation of the two first moments shows that for any graphon $\gamma$ and any finite graph $F$, $t(F,G_n(\gamma))$ converges in probability towards $t(F,\gamma)$ (see Section \ref{sec:moments}). Therefore, $(G_n(\gamma))_{n\geq 1}$ converges in probability to $\gamma$ in the space of graphons \cite{LS06}*{Corollary 2.6}; this shows in particular that the finite graphs form a dense subset of $\Gspa$.\medskip

In \cite{FMN20}, the fluctuations of the random variables $t(F,G_n(\gamma))$ have been studied for any graphon $\gamma$ and any fixed observable $t(F,\cdot)$. In order to state these results, it is convenient to introduce the combinatorial algebra $\obs$ of finite (unlabelled) graphs: it is the $\R$-algebra with a countable basis indexed by the set $\Gset$ of isomorphism types of finite graphs, and where the product $F_1F_2$ of two graphs $F_1$ and $F_2$ is their disjoint union $F_1 \sqcup F_2$. One evaluates an element of $\obs$ on a graphon or on a graph by means of the formula:
$$\left(\sum_{F} c_F\,F\right)(\gamma) = \sum_{F} c_F\,t(F,\gamma).$$
This rule yields a morphism of real algebras from $\obs$ to the space of continuous functions $\mathscr{C}(\Gspa,\R)$. The image of this morphism is dense by the Stone--Weierstrass theorem. Given two non-empty finite graphs $F_1$ and $F_2$, let us fix an arbitrary labelling of their vertices by the sets of integers $\lle 1,k_1\rre$ and $\lle 1',k_2'\rre$. For $i_1 \in \lle 1,k_1\rre$ and $i_2 \in \lle 1,k_2\rre$, we denote $(F_1\join F_2)(i_1,i_2)$ the (isomorphism type of) graph obtained by identifying the vertex $i_1$ in $F_1$ with the vertex $i_2'$ in $F_2$. Thus, the graph $(F_1\join F_2)(i_1,i_2)$ has $k_1+k_2-1$ vertices, and its number of edges is $|E_{F_1}|+|E_{F_2}|$. We then set:
$$\kappa_2(F_1,F_2) = \sum_{\substack{1\leq i_1\leq k_1\\1\leq i_2 \leq k_2}} \big((F_1\join F_2)(i_1,i_2) - F_1\,F_2\big),$$
which is an element of $\obs$. 

\begin{example}
 If $F = F_1 = F_2 = \begin{tikzpicture}[baseline=-1mm,scale=0.5]
\foreach \x in {(0,0),(1,0),(2,0)}
\fill \x circle (4pt);
\draw (0,0) -- (2,0);
\end{tikzpicture}$\,, then
$$\kappa_2(F,F) = 4 \,\,\begin{tikzpicture}[baseline=-1mm,scale=0.5]
\foreach \x in {(0,0),(1,0),(2,0),(3,0),(4,0)}
\fill \x circle (4pt);
\draw (0,0) -- (4,0);
\end{tikzpicture} \,+\, 4\,\,\begin{tikzpicture}[baseline=-1mm,scale=0.5]
\foreach \x in {(0,0),(1,0),(2,0),(2,1),(2,-1)}
\fill \x circle (4pt);
\draw (0,0) -- (2,0) -- (2,1);
\draw (2,0) -- (2,-1);
\end{tikzpicture} \,+\, \begin{tikzpicture}[baseline=-1mm,scale=0.5]
\foreach \x in {(0,0),(1,0),(2,0),(1,1),(1,-1)}
\fill \x circle (4pt);
\draw (0,0) -- (2,0);
\draw (1,1) -- (1,-1);
\end{tikzpicture}\,- \,9\,\, \begin{tikzpicture}[baseline=1.5mm,scale=0.5]
\foreach \x in {(0,0),(1,0),(2,0),(0,1),(1,1),(2,1)}
\fill \x circle (4pt);
\draw (0,0) -- (2,0);
\draw (0,1) -- (2,1);
\end{tikzpicture}\,.$$
\end{example} 
\medskip

\noindent One of the main results from \cite{FMN20}*{Theorem 8} on the fluctuations of graphon models is:
\begin{theorem}\label{thm:fmn}
For any finite graphs $F_1$ and $F_2$ with sizes $k_1,k_2\geq 1$, and any graphon $\gamma \in \Gspa$,
$$\lim_{n \to \infty} n\,\cov(t(F_1,G_n(\gamma)),t(F_2,G_n(\gamma))) = \kappa_2(F_1,F_2)(\gamma).$$
The random variable
$$Y_n(F,\gamma) = \sqrt{n}\big(t(F,G_n(\gamma))-\esper[t(F,G_n(\gamma))]\big)$$
converges in distribution to the normal law $\mathcal{N}(0,\kappa_2(F,F)(\gamma))$.
\end{theorem}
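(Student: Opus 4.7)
The starting point is the decomposition of $t(F,G_n(\gamma))$ into a conditional expectation and a conditional fluctuation:
$$A_n(F)=\esper[t(F,G_n(\gamma))\,|\,X_1,\ldots,X_n],\qquad B_n(F)=t(F,G_n(\gamma))-A_n(F).$$
Conditionally on $(X_1,\ldots,X_n)$, the edge indicators $1_{i\sim j}$ are independent Bernoullis with parameter $g(X_i,X_j)$, so
$$A_n(F)=\frac{1}{n^k}\sum_{\phi : \lle 1,k\rre \to \lle 1,n\rre} f_F(X_{\phi(1)},\ldots,X_{\phi(k)}),\qquad f_F(x_1,\ldots,x_k)=\prod_{\{a,b\}\in E_F}g(x_a,x_b).$$
A direct count of the pairs of homomorphisms whose edge-images overlap shows that $\esper[\var(t(F,G_n(\gamma))\,|\,X)]=O(n^{-2})$: such pairs must share at least two vertices, leaving only $O(n^{2k-2})$ configurations out of $n^{2k}$. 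The conditional fluctuation $B_n(F)$ is therefore negligible at the scale $\sqrt{n}$, and one may focus on $A_n(F)$.

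I would then view $A_n(F)$ as an (asymmetric, repeated-index) U-statistic in the i.i.d.\ sample $(X_1,\ldots,X_n)$ and perform its Hoeffding decomposition. The rank-$1$ projection gives
$$A_n(F)-\esper[A_n(F)]=\frac{1}{n}\sum_{i=1}^n H_F(X_i)+R_n(F),\qquad H_F(x)=\sum_{v\in V_F}\bigl(\hat f_F^{(v)}(x)-t(F,\gamma)\bigr),$$
where $\hat f_F^{(v)}(x)$ is the integral of $f_F(x_1,\ldots,x_k)$ over $[0,1]^{k-1}$ with the $v$-th variable held equal to $x$. Standard orthogonality estimates show that each rank-$m$ Hoeffding term has variance $O(n^{-m})$, and the contributions of non-injective tuples $\phi$ are of still smaller order, so $\esper[R_n(F)^2]=O(n^{-2})$ and $\sqrt{n}\,R_n(F)\to 0$ in $L^2$. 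The classical multivariate central limit theorem applied to the bounded i.i.d.\ vectors $(H_{F_1}(X_i),H_{F_2}(X_i),\ldots)$ then yields a joint Gaussian limit for $\sqrt{n}(A_n(F)-\esper[A_n(F)])$, with covariances $\cov(H_{F_1}(X_1),H_{F_2}(X_1))$.

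It remains to recognise this limit covariance as $\kappa_2(F_1,F_2)(\gamma)$. Expanding,
$$\cov(H_{F_1}(X),H_{F_2}(X))=\sum_{v_1\in V_{F_1}}\sum_{v_2\in V_{F_2}}\Bigl(\esper\bigl[\hat f_{F_1}^{(v_1)}(X)\,\hat f_{F_2}^{(v_2)}(X)\bigr]-t(F_1,\gamma)\,t(F_2,\gamma)\Bigr),$$
and the product $\hat f_{F_1}^{(v_1)}(x)\,\hat f_{F_2}^{(v_2)}(x)$ is exactly the partial integrand (at the common pinned vertex $x$) of the graph $(F_1\join F_2)(v_1,v_2)$ obtained by gluing $F_1$ to $F_2$ along the pair $(v_1,v_2)$. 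Integration in $x$ produces $t((F_1\join F_2)(v_1,v_2),\gamma)$, and the double sum matches the defining formula of $\kappa_2(F_1,F_2)(\gamma)$. The main technical hurdle in this scheme is the careful bookkeeping for the remainder $R_n(F)$, since one must simultaneously control the higher-rank Hoeffding terms and the corrections due to non-injective tuples $\phi$, each at the level $O(n^{-1})$ in $L^2$; this is routine but tedious. One could alternatively invoke Stein's method for sums of weakly dependent variables, but the Hoeffding path makes the combinatorial identification of $\kappa_2$ most transparent.
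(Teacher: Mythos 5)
Your argument is correct, but it is not the route taken here: Theorem \ref{thm:fmn} is quoted from \cite{FMN20}, and the only part rederived in this paper is the covariance expansion, obtained in Section \ref{sec:moments} by expanding the joint moments $\esper[S_n(F_1)\,S_n(F_2)]$ over set partitions and loopless contractions $F\downarrow\pi$ (Proposition \ref{prop:moment}); the asymptotic normality itself rests on the dependency-graph bound on cumulants \eqref{eq:bound_cumulant}. What you propose is the classical projection method: split off the conditional fluctuation given $(X_1,\ldots,X_n)$, whose variance is $O(n^{-2})$ and hence invisible at scale $\sqrt{n}$, perform the Hoeffding decomposition of the remaining $U$-statistic, and apply the i.i.d.\ central limit theorem to the H\'ajek projection $\frac{1}{n}\sum_{i=1}^n H_F(X_i)$; your identification $\esper[\hat f_{F_1}^{(v_1)}(X)\,\hat f_{F_2}^{(v_2)}(X)]=t((F_1\join F_2)(v_1,v_2),\gamma)$ is exactly the marked-density computation reused in Section \ref{sec:spectral}. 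Two points deserve tightening: (i) your formula for $A_n(F)$ is exact only on injective $\phi$ (a non-injective $\phi$ may collapse two edges of $F$ onto one edge or onto a loop, so the conditional expectation is governed by $F\downarrow\pi(\phi)$ rather than by $f_F$ evaluated at repeated arguments), though these $O(n^{k-1})$ terms are indeed of lower order after centering; and (ii) the first assertion concerns convergence of $n\,\cov(\cdot,\cdot)$ itself, which follows from $\esper[A_n(F_1)\,B_n(F_2)]=0$ together with Hoeffding orthogonality and is worth stating explicitly. The trade-off between the two approaches is this: the projection method yields a short, self-contained central limit theorem, but it does not produce the uniform control on joint cumulants of all orders encoded in \eqref{eq:bound_cumulant}, and that control --- not the CLT alone --- is the input needed later for the hidden equations of Theorem \ref{thm:additional_equations} and the singular upper bound of Theorem \ref{thm:upper_bound}.
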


The results from \cite{FMN20} are in fact much more precise: they belong to the framework of \emph{mod-Gaussian convergence} developed in \cites{FMN16,FMN19}, and therefore, the central limit theorem stated above is refined by a Berry--Esseen upper bound on the Kolmogorov distance between $Y_n(F,\gamma)$ and $\mathcal{N}(0,1)$; a moderate deviation estimate; a local limit theorem; and a concentration inequality. We refer in particular to \cite{FMN20}*{Theorems 8, 9 and 21}. In \cite{FMN20}*{Section 6}, the abstract notion of \emph{mod-Gaussian moduli space} was introduced in order to describe these results. We have a pair $(\Gspa,\obs)$, where:
\begin{itemize}
    \item $\Gspa$ is a (infinite-dimensional) compact metric space, and for every element $\gamma \in \Gspa$, we have a sequence of random elements $G_n(\gamma)$ with $G_n(\gamma) \to_{\proba} \gamma$.
    \item $\obs$ is a real algebra of observables of the elements of $\Gspa$, such that the topology of $\Gspa$ is the weak topology induced by $\obs$.
    \item there exists a combinatorial (countable) basis $\Gset$ of the algebra $\obs$, and a map $\kappa_2 : \Gset \times \Gset \to \obs$, such that for any elements $f_1,f_2 \in \Gset$, and any parameter $\gamma \in \Gspa$,
    $$\cov(f_1(G_n(\gamma)),f_2(G_n(\gamma))) = \frac{\kappa_2(f_1,f_2)(\gamma)}{n} + o\!\left(\frac{1}{n}\right).$$
    Moreover, if $\kappa_2(f,f)(\gamma) \neq 0$, then the random variables
    $$Y_n(f,\gamma)=\sqrt{n}\big(f(G_n(\gamma))-\esper[f(G_n(\gamma))]\big)$$ 
    are asymptotically normal, and we even have the estimates from the theory of mod-Gaussian sequences.
\end{itemize}
A similar framework can be constructed for models of random permutations or random integer partitions, see again \cite{FMN20}. More recently, an analogous construction has been performed for random measure metric spaces; see \cite{DCM21}.\bigskip

In this framework of mod-Gaussian moduli spaces, a natural question is: what are the fluctuations of the random observables $f(G_n(\gamma))$ in the singular case where $\kappa_2(f,f)(\gamma)=0$? For the $W$-random graphs attached to graphons, this means that we consider a graphon $\gamma$ and a finite graph $F$ such that
\begin{equation}
\kappa_2(F,F)(\gamma)=0,\tag{S1}\label{eq:singular_1}
\end{equation} 
or, equivalently, such that
\begin{equation}
\var(t(F,G_n(\gamma))) = O(n^{-2})\quad \text{(instead of } O(n^{-1})). \tag{S2}\label{eq:singular_2}
\end{equation}
Indeed, we shall recall in Section \ref{sec:moments} that the moments of the random densities $t(F,G_n(\gamma))$ can always be expanded in negative powers of $n$; therefore, if $\kappa_2(F,F)(\gamma)$ vanishes, then the leading term in the expansion of the variance is a $O(n^{-2})$. 
\medskip

\subsection{Singular graphons and Erd\H{o}s--Rényi random graphs}
Given a graphon $\gamma$, the vanishing of the asymptotic covariance coefficient $\kappa_2(F_1,F_2)(\gamma)$ is equivalent to the following equation:
\begin{equation}
t(F_1\,F_2,\gamma) = \frac{1}{k_1k_2}\,\sum_{\substack{1\leq i_1 \leq k_1\\ 1\leq i_2 \leq k_2}} t((F_1\join F_2)(i_1,i_2),\gamma)\tag{S3}\label{eq:singular_3}
\end{equation}
if $F_1$ and $F_2$ have respectively $k_1$ and $k_2$ vertices. As Equation \eqref{eq:singular_3} imposes a condition of finite-dimensional nature on elements of the infinite-dimensional space $\Gspa$, it is reasonable to believe that when $F_1$ and $F_2$ are fixed, there are many graphons $\gamma$ for which the identity holds. A more interesting question is whether Equation \eqref{eq:singular_3} can hold \emph{simultaneously for every finite graphs $F_1$ and $F_2$}. This question leads to the following definition:

\begin{definition}\label{def:singular}
We call a graphon $\gamma$ \emph{globally singular}, or in short \emph{singular} if, for any finite graphs $F_1$ and $F_2$ on $k_1$ and $k_2$ vertices,
Equation \eqref{eq:singular_3} holds.
\end{definition}
\noindent Equivalently, a graphon $\gamma$ is singular when Equation \eqref{eq:singular_1} or \eqref{eq:singular_2} holds simultaneously for every finite graph $F$. This means that the fluctuations of the random densities $t(F,G_n(\gamma))$ are all of order $O(n^{-1})$, instead of $O(n^{-1/2})$ (which is the generic case). 
\medskip

\begin{example}\label{ex:erdos_renyi}
For $p \in [0,1]$, denote $\gamma_p$ the graphon of the graph function $[0,1]^2 \to [0,1]$ which is constant equal to $p$. The $W$-random graph $G_n(\gamma_p)$ is then the Erd\H{o}s--Rényi random graph $G(n,p)$ \cite{ER60}: all the edges $\{i,j\}$ of $G(n,p)$ are independent, and each edge $\{i,j\}$ with $1 \leq i < j \leq n$ appears with probability $p$. The densities of the graphon $\gamma_p$ are given by:
$$t(F,\gamma_p) = p^{|E_F|}$$
for any finite graph $F$. Then, Equation \eqref{eq:singular_3} trivially holds, since $|E_{(F_1\join F_2)(i_1,i_2)}| = |E_{F_1}| + |E_{F_2}|$ for any join of the two graphs $F_1$ and $F_2$. Therefore, the constant graphons $\gamma_p$ are singular. In this case, one can show that 
$$\var(t(F,G(n,p))) = \frac{2\,|E_F|^2\,p^{2|E_F|-1}\,(1-p)}{n^2} + o\!\left(\frac{1}{n^2}\right),$$
and that the random vector 
\begin{equation}
\big(n(t(F,G(n,p)) - \esper[t(F,G(n,p))])\big)_{F \in \Gset} \label{eq:fluctuations}\tag{F}
\end{equation} 
converges towards a Gaussian distribution, although the fluctuations of the random variables $t(F,G(n,p))$ are not of the same size as in the generic case (we have rescaled the fluctuations by a factor $n$ instead of $\sqrt{n}$). We refer to \cites{Jan88,Now89} for the first proofs of these results; see also \cite{FMN16}*{Section 10}, where the mod-Gaussian convergence of the subgraph counts of the Erd\H{o}s--Rényi random graphs has been established. Let us notice however that in the framework of mod-Gaussian moduli spaces, it is not always the case that after a different renormalisation, a singular parameter provides observables whose fluctuations are asymptotically normal. In the setting of random metric spaces, an explicit counter-example has been described in \cite{DCM21}*{Sections 5 and 6}.
\end{example}
\medskip

After the writing of \cite{FMN20}, its authors tried without success to find other examples of singular graphons. So, they conjectured the following:

\begin{conjecture}
If $\gamma$ is a singular graphon, then there exists a parameter $p \in [0,1]$ such that $\gamma=\gamma_p$. This parameter $p$ is the edge density
$$p = t(\edgegraph,\,\gamma).$$
\end{conjecture}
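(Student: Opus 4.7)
The plan is to reformulate the singularity hypothesis in $L^2$ terms, extract a first constraint from the simplest pair of graphs, and then pass to the spectral side via the integral operator $T_g$ on $L^2([0,1])$ with kernel $g$. For a graph $F$ with vertices labelled by $\lle 1,k\rre$, I denote by $t_i(F,\gamma)(v)$ the \emph{rooted density} obtained by fixing $x_i=v$ and integrating $\prod_{\{j,l\}\in E_F}g(x_j,x_l)$ over the remaining $k-1$ variables, and I set $S_F(v)=\sum_{i=1}^{k}t_i(F,\gamma)(v)$. A direct application of Fubini rewrites $t((F_1\join F_2)(i_1,i_2),\gamma)=\int_0^1 t_{i_1}(F_1,\gamma)(v)\,t_{i_2}(F_2,\gamma)(v)\DD{v}$, so that \eqref{eq:singular_3} becomes the $L^2$ identity $\int_0^1 S_{F_1}S_{F_2}\DD{v}=(\int_0^1 S_{F_1}\DD{v})(\int_0^1 S_{F_2}\DD{v})$. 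Specialising $F_1=F_2=F$ yields the clean criterion that $\gamma$ is singular if and only if $S_F(v)$ is a.e.\ constant for every finite graph $F$.

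Applied to $F=\edgegraph$, this gives $S_{\edgegraph}(v)=2\,d(v)$ constant, so the degree function $d(v)=\int_0^1 g(v,y)\DD{y}$ is a.e.\ equal to $p$. Spectrally, $\mathbf{1}$ is then an eigenvector of the compact self-adjoint operator $T_g$ with eigenvalue $\lambda_1=p$, and in the decomposition $T_g=\sum_i\lambda_i\,\phi_i\otimes\phi_i$ the conjecture is equivalent to $\lambda_i=0$ for every $i\geq 2$; via the trace identity $t(C_4,\gamma)=\tr(T_g^4)=\sum_i\lambda_i^4\geq\lambda_1^4=p^4$, it reduces further to the single equality $t(C_4,\gamma)=p^4$. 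I would then try to upgrade the pointwise information from (S3) for vertex-transitive $F$ --- namely that $(T_g^k)(v,v)=\sum_i\lambda_i^k\,\phi_i(v)^2$ is a.e.\ constant in $v$ for every $k$ --- into this global equality, using \eqref{eq:singular_3} applied also to non-vertex-transitive pairs to add new constraints.

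The main obstacle is precisely this upgrade. The pointwise constancy of $(T_g^k)(v,v)$ is already satisfied as soon as the non-trivial eigenfunctions obey $\phi_i(v)^2\equiv 1$, the paradigmatic example being the two-block stochastic block model $g(x,y)=p+\mu\,\chi(x)\chi(y)$ with $\chi=\mathbf{1}_{[0,1/2]}-\mathbf{1}_{[1/2,1]}$. For this $g$ the Lebesgue involution $x\mapsto 1-x$ preserves $g$ and flips $\chi$, so by a straightforward change of variables every rooted density $t_i(F,\gamma)(v)$ --- not merely the symmetric sum $S_F$ --- turns out to be constant, and \eqref{eq:singular_3} appears to hold for every pair $(F_1,F_2)$; yet $t(K_3,\gamma)=p^3+\mu^3\neq p^3$ for $\mu\neq 0$, so $\gamma$ is not equivalent to any $\gamma_p$. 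Either this model must be ruled out by a subtler consequence of (S3) --- which would require isolating the individual rootings $t_i(F,\gamma)(v)$ rather than only their symmetric sums --- or one must turn to the characteristic polynomial equation of the Laplacian of $G_n(\gamma)$ announced in the abstract, which should carry spectral information beyond the homomorphism family $\{t(F,\gamma)\}_F$. I suspect the latter is the decisive tool, and it is even conceivable that the statement needs to be strengthened (so as to exclude such symmetric block structures) before a proof can be expected.
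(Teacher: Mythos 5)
You should first be clear that this statement is the paper's open \emph{Conjecture}: the paper contains no proof of it, only partial evidence in its favour (Corollary \ref{cor:tree_densities}, Theorem \ref{thm:simple_equations}, Theorem \ref{thm:additional_equations}, Proposition \ref{prop:gaussian_edge_density}), so there is no argument of the author's to compare yours against. Your reformulation is correct and is essentially the paper's own marked-density computation from Subsection \ref{sub:factorisation}, extended from transitive motives to arbitrary ones by symmetrising over the root: writing $S_F(v)=\sum_{i\in V_F}t(F^{\bullet i},g,v)$, Equation \eqref{eq:singular_3} for the pair $(F_1,F_2)$ is exactly $\cov(S_{F_1}(V),S_{F_2}(V))=0$ with $V$ uniform on $[0,1]$, so $\gamma$ is singular if and only if every $S_F$ is almost everywhere constant.

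The real problem is that you stop one step short of the conclusion your own equivalence forces. Definition \ref{def:singular} is \emph{exactly} ``\eqref{eq:singular_3} for all pairs'', so there is no ``subtler consequence of (S3)'' left to rule your block model out: once every rooted density of $g(x,y)=p+\mu\,\chi(x)\chi(y)$ is constant, the graphon \emph{is} singular, full stop. And the constancy is genuine: expanding each edge factor as $p+\mu\chi(x)\chi(y)$ and integrating out the non-root variables, a subset $S\subset E_F$ survives only if every non-root vertex has even $S$-degree, and the handshake lemma then forces the root's $S$-degree to be even as well, so $t(F^{\bullet i},g,v)=\sum_{S\ \mathrm{even}}p^{|E_F|-|S|}\mu^{|S|}$, independently of $v$ and of $i$. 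Hence $\gamma=[p+\mu\chi\otimes\chi]$ is singular, yet $t(K_3,\gamma)=p^3+\mu^3$ and $t(C_4,\gamma)=p^4+\mu^4$, so it is not weakly isomorphic to any constant graphon: unless I am misreading Definition \ref{def:singular}, this is a counterexample and the conjecture as stated is false. Nothing in the paper excludes it: all its tree densities equal $p^{|E_T|}$ (only $S=\emptyset$ is even in a forest), the hidden equations of Theorem \ref{thm:additional_equations} hold automatically (they are consequences of singularity, not extra hypotheses), and the limiting third and fourth cumulants of the edge density computed before Proposition \ref{prop:gaussian_edge_density}, namely $8\mu^3$ and $48\mu^4$, are precisely the cumulants of the law $\mathcal{N}(0,2p(1-p)-2\mu^2)+\mu(Z^2-1)$ predicted by the Hoeffding decomposition of the edge count --- non-Gaussian, exactly as Proposition \ref{prop:gaussian_edge_density} demands of a non-constant singular graphon. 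So you should commit to the counterexample rather than pivot to the spectral approach; what remains open after it is the classification of all singular graphons, for which your observation that constancy of every rooted density suffices is the right starting point.
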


\noindent Let us explain briefly why this conjecture is really difficult to prove. The origin of Formula \eqref{eq:singular_3} will be given in Section \ref{sec:moments}. Let us consider a singular graphon $\gamma$ with edge density $p$, and see what is implied readily by this identity. Since
$$\kappa_2(\edgegraph\,,\,\edgegraph) = 4\left(\begin{tikzpicture}[baseline=-1mm,scale=0.5]
\foreach \x in {(0,0),(1,0),(2,0)}
\fill \x circle (4pt);
\draw (0,0) -- (1,0) -- (2,0);
\end{tikzpicture} \,-\,\begin{tikzpicture}[baseline=2mm,scale=0.5]
\foreach \x in {(0,0),(1,0),(0,1),(1,1)}
\fill \x circle (4pt);
\draw (0,0) -- (1,0);
\draw (0,1) -- (1,1);
\end{tikzpicture}\,\right),$$
we obtain immediately $t(\begin{tikzpicture}[baseline=-1mm,scale=0.5]
\foreach \x in {(0,0),(1,0),(2,0)}
\fill \x circle (4pt);
\draw (0,0) -- (1,0) -- (2,0);
\end{tikzpicture},\,\gamma)=(t(\edgegraph),\gamma)^2 = p^2$. We can then try to prove that $t(F,\gamma)=p^{|E_F|}$ for any (connected) finite graph $F$; indeed, these identities characterise the constant graphon $\gamma_p$. For the trees with $4$ vertices, the only equation that seems to come from \eqref{eq:singular_3} is 
$$ 0 = \kappa_2(\edgegraph\,,\,\begin{tikzpicture}[baseline=-1mm,scale=0.5]
\foreach \x in {(0,0),(1,0),(2,0)}
\fill \x circle (4pt);
\draw (0,0) -- (2,0);
\end{tikzpicture})(\gamma) = 4\,t(\begin{tikzpicture}[baseline=-1mm,scale=0.5]
\foreach \x in {(0,0),(1,0),(2,0),(3,0)}
\fill \x circle (4pt);
\draw (0,0) -- (3,0);
\end{tikzpicture},\,\gamma) + 2\, t\!\left(\begin{tikzpicture}[baseline=2mm,scale=0.5]
\foreach \x in {(0,0),(1,0),(2,0),(1,1)}
\fill \x circle (4pt);
\draw (0,0) -- (2,0);
\draw (1,0) -- (1,1);
\end{tikzpicture},\,\gamma\right)- 6p^3.$$
This does not seem to determine the value of the density in $\gamma$ of the two trees on $4$ vertices drawn above; however, we shall provide later a direct argument which proves that $t(T,\gamma)=p^{|E_T|}$ for any tree $T$ and any singular graphon $\gamma$ with edge density $p$ (see Corollary \ref{cor:tree_densities}). Unfortunately, we then have no information on the density of the triangle or of the square in $\gamma$, because these graphs with cycles cannot be obtained from simpler graphs by means of the join operation $\join$\,.
A famous result due to Chung, Graham and Wilson shows that a graphon $\gamma$ is a constant graphon $\gamma_p$ if and only if 
$$t(\begin{tikzpicture}[baseline=0.5mm,scale=0.3]
\foreach \x in {(0,0),(0,1),(1,0),(1,1)}
\fill \x circle (5pt);
\draw (0,0) rectangle (1,1);
\end{tikzpicture},\,\gamma) = (t(\edgegraph,\,\gamma))^4;$$ 
see \cite{CGW89} and \cite{Lov12}*{Section 11.8.1}. Unfortunately, our Conditions \eqref{eq:singular_1}-\eqref{eq:singular_3} only relate the values of observables $t(F,\gamma)$ for finite graphs $F$ which can be obtained from one another by using the join operation $\join$; and the square $\begin{tikzpicture}[baseline=0.5mm,scale=0.3]
\foreach \x in {(0,0),(0,1),(1,0),(1,1)}
\fill \x circle (5pt);
\draw (0,0) rectangle (1,1);
\end{tikzpicture}$ cannot be expressed as a join of simpler graphs.\medskip

A question which is related to our Conjecture is whether the vector of correctly rescaled fluctuations \eqref{eq:fluctuations} with $G_n(\gamma)$ instead of $G(n,p)$ has a limiting distribution for any singular graphon $\gamma$. The answer to this question is positive, and this is the main result of this article:

\begin{maintheorem}
Let $\gamma$ be a singular graphon. Then, the random vector
$$\big(n(t(F,G_n(\gamma)) - \esper[t(F,G_n(\gamma))])\big)_{F \in \Gset}$$
converges in joint finite-dimensional distributions. The limiting distribution is determined by its joint moments.
\end{maintheorem} 

\noindent Obviously, this is a very partial result: we do not know precisely what is the limiting distribution if $\gamma$ is not a constant graphon, and we do not know if $\gamma$ can be something else than a constant graphon. Actually, we shall see that given a singular graphon $\gamma$, if $n(t(\edgegraph,G_n(\gamma))-\esper[t(\edgegraph,G_n(\gamma))])$ (scaled random edge density) is asymptotically normal, then $\gamma$ is a constant graphon $\gamma_p$.  With the techniques presented in this article, we are unfortunately not able to prove that the scaled random edge density is always asymptotically normal for a singular graphon.

\begin{remark}
Throughout the article, many Propositions and Theorems will start with an assumption on the edge density $p \in [0,1]$ of a (singular) graphon. We shall always assume (and omit to recall) that $p>0$: indeed, the case $p=0$ is trivial, because the only graphon with vanishing edge density is the constant graphon $\gamma_0$.
\end{remark}
\medskip

\subsection{Techniques of proof and outline of the paper}\label{sub:outline} 
Before describing the proof of our Main Theorem, we should explain that the models of $W$-random graphs belong to the class of random models \emph{with a fast decay of cumulants}. Let us fix an arbitrary graphon $\gamma \in \Gspa$, and consider the sequence of centered random densities $(Z_n)_{n \in \N}$ with $Z_n = t(F,G_n(\gamma))-\esper[t(F,G_n(\gamma))]$, $F$ being a fixed motive. Theorem \ref{thm:fmn} implies in particular that the second cumulant of $Z_n$ satisfies:
$$\kappa^{(2)}(Z_n) = \var(Z_n) = O(n^{-1}).$$
Consider more generally the $r$-th cumulant of $Z_n$, which is up to a factor $\frac{1}{r!}$ the coefficient of $t^r$ in the log-Laplace transform $\log (\esper[\E^{tZ_n}])$. Then, as we shall recall in Section \ref{sec:moments}, the higher order cumulants with $r \geq 3$ are all $O(n^{-2})$ when $n$ goes to infinity (this is why we have a central limit theorem for $\sqrt{n}\,Z_n$ for any graphon $\gamma \in \Gspa$). Moreover, their decay to $0$ is faster when $r$ is large: thus,
\begin{equation}
|\kappa^{(r)}(Z_n)| = O\!\left(n^{1-r}\right), \tag{FD}\label{eq:FD}
\end{equation}
with a constant in the $O(\cdot)$ which depends on $r$ and on the fixed graph $F$. This faster decay of the higher order cumulants is not at all required in order to ensure the asymptotic normality, but it occurs quite frequently. For instance, in \cite{Jan88}, \cite{Col03} and \cite{Sni06}, the fast decay of cumulants has been established for observables of models respectively of random graphs, of random matrices and of random characters of symmetric groups. Regarding the random characters of symmetric groups, the observables are related to the Plancherel and Schur--Weyl measures on integer partitions, and the fast decay of cumulants has been generalised to other similar models in \cites{FM12,Mel12,DS19,MS20}. The probabilistic consequences of these estimates on cumulants have been explored in \cites{FMN16,FMN19}, in the framework of mod-$\phi$ and mod-Gaussian convergence. Now, a general question which can be asked in this setting of fast decay of cumulants is the following. Given random models which yield centered random variables $(Z_n)_{n \in \N}$ which satisfy the inequality \eqref{eq:FD}, let us consider one specific singular model for which the variance is smaller than usual:
$$\kappa^{(2)}(Z_n) = O\!\left(n^{-2}\right)\quad \text{(instead of }O(n^{-1})).$$
\begin{problem}
Given a singular model in a class of model with fast decay of cumulants, do we also have smaller higher order cumulants, with $$|\kappa^{(r)}(Z_n)| = O(n^{-r})\quad \text{(instead of } O(n^{1-r}))$$ for any $r \geq 3$?
\end{problem}
\noindent This is an important question, because if the answer is positive, then one might be able to prove a convergence in distribution
$$n\,Z_n \quad \big(\text{instead of }\sqrt{n}\,Z_n\big)\rightharpoonup_{n \to \infty} \text{some limiting law}.$$ 
This technique has already been used in the setting of random metric spaces, see \cite{DCM21}*{Theorem 5.7}.
\medskip

Our main result is obtained by proving that the singular graphons indeed provide random densities with smaller higher order cumulants; and by estimating precisely these cumulants of higher order. In Section \ref{sec:moments}, we start by developing a general framework for the computation of the moments and cumulants of the homomorphism densities $t(F,G_n(\gamma))$ of the $W$-random graphs. These computations lead in particular to Equation \eqref{eq:singular_3}, and to a crude upper bound $|\kappa^{(r)}(Z_n)| = O(n^{-r})$ for the singular graphons. This crude upper bound (without an explicit constant in the $O(\cdot)$) is not sufficient in order to prove the central limit theorem, but it entails \emph{hidden equations} satisfied by the observables of singular graphons (Theorem \ref{thm:additional_equations}). Consider for instance the following graphs:
\begin{align*}
F_1 &=  \begin{tikzpicture}[baseline=-1mm,scale=0.7]
\foreach \x in {(0,0),(1,0),(0.5,0.5),(0.5,-0.5)}
\fill \x circle (3pt);
\draw (0,0) -- (0.5,0.5) -- (1,0) -- (0.5,-0.5) -- (0,0);
\draw (0,0) -- (1,0);
\end{tikzpicture}\quad;\quad 
F_{2a} = \begin{tikzpicture}[baseline=-1mm,scale=0.7]
\foreach \x in {(0,0),(1,0),(0.5,0.5),(0.5,-0.5),(1.5,0.5),(1.5,-0.5),(2,0)}
\fill \x circle (3pt);
\draw (0,0) -- (0.5,0.5) -- (1,0) -- (0.5,-0.5) -- (0,0);
\draw (1,0) -- (1.5,0.5) -- (2,0) -- (1.5,-0.5) -- (1,0);
\draw (0,0) -- (2,0);
\end{tikzpicture}\quad;\quad
F_{2b} = \begin{tikzpicture}[baseline=-1mm,scale=0.7]
\foreach \x in {(0,0),(1,0),(0.5,0.5),(0.5,-0.5),(1.5,0.5),(1.5,-0.5),(2,0)}
\fill \x circle (3pt);
\draw (0,0) -- (0.5,0.5) -- (1,0) -- (0.5,-0.5) -- (0,0);
\draw (1,0) -- (1.5,0.5) -- (2,0) -- (1.5,-0.5) -- (1,0);
\draw (0,0) -- (1,0);
\draw (1.5,0.5) -- (1.5,-0.5);
\end{tikzpicture}\quad;\quad
F_{2c} = \begin{tikzpicture}[baseline=-1mm,scale=0.7]
\foreach \x in {(0,0),(1,0),(0.5,0.5),(0.5,-0.5),(1.5,0.5),(1.5,-0.5),(2,0)}
\fill \x circle (3pt);
\draw (0,0) -- (0.5,0.5) -- (1,0) -- (0.5,-0.5) -- (0,0);
\draw (1,0) -- (1.5,0.5) -- (2,0) -- (1.5,-0.5) -- (1,0);
\draw (0.5,0.5) -- (0.5,-0.5);
\draw (1.5,0.5) -- (1.5,-0.5);
\end{tikzpicture}\,.\\
\end{align*}
The graphs $F_{2a}$, $F_{2b}$ and $F_{2c}$ are involved in the computation of $\kappa_2(F_1,F_1)$:
$$\frac{1}{4}\,\kappa_2(F_1,F_1) = F_{2a}+2\,F_{2b}+F_{2c}-4\,(F_1)^2.$$
If $\gamma$ is a singular graphon, then this observable in $\obs$ vanishes on $\gamma$. Let us now see which equations one can write for the joins of three copies of $F_1$. We set:
\begin{align*}
F_{3a} &= \begin{tikzpicture}[baseline=-1mm,scale=0.7]
\foreach \x in {(0,0),(1,0),(0.5,0.5),(0.5,-0.5),(1.5,0.5),(1.5,-0.5),(2,0),(2.5,0.5),(2.5,-0.5),(3,0)}
\fill \x circle (3pt);
\draw (0,0) -- (0.5,0.5) -- (1,0) -- (0.5,-0.5) -- (0,0);
\draw (1,0) -- (1.5,0.5) -- (2,0) -- (1.5,-0.5) -- (1,0);
\draw (2,0) -- (2.5,0.5) -- (3,0) -- (2.5,-0.5) -- (2,0);
\draw (0,0) -- (3,0);
\end{tikzpicture}\quad;\quad 
F_{3b} = \begin{tikzpicture}[baseline=-1mm,scale=0.7]
\foreach \x in {(0,0),(1,0),(0.5,0.5),(0.5,-0.5),(1.5,0.5),(1.5,-0.5),(2,0),(2.5,0.5),(2.5,-0.5),(3,0)}
\fill \x circle (3pt);
\draw (0,0) -- (0.5,0.5) -- (1,0) -- (0.5,-0.5) -- (0,0);
\draw (1,0) -- (1.5,0.5) -- (2,0) -- (1.5,-0.5) -- (1,0);
\draw (2,0) -- (2.5,0.5) -- (3,0) -- (2.5,-0.5) -- (2,0);
\draw (0,0) -- (2,0);
\draw (2.5,0.5) -- (2.5,-0.5);
\end{tikzpicture}\quad;\quad 
F_{3c} = \begin{tikzpicture}[baseline=-1mm,scale=0.7]
\foreach \x in {(0,0),(1,0),(0.5,0.5),(0.5,-0.5),(1.5,0.5),(1.5,-0.5),(2,0),(2.5,0.5),(2.5,-0.5),(3,0)}
\fill \x circle (3pt);
\draw (0,0) -- (0.5,0.5) -- (1,0) -- (0.5,-0.5) -- (0,0);
\draw (1,0) -- (1.5,0.5) -- (2,0) -- (1.5,-0.5) -- (1,0);
\draw (2,0) -- (2.5,0.5) -- (3,0) -- (2.5,-0.5) -- (2,0);
\draw (0,0) -- (1,0);
\draw (1.5,0.5) -- (1.5,-0.5);
\draw (3,0) -- (2,0);
\end{tikzpicture}\\
F_{3d} &= \begin{tikzpicture}[baseline=-1mm,scale=0.7]
\foreach \x in {(0,0),(1,0),(0.5,0.5),(0.5,-0.5),(1.5,0.5),(1.5,-0.5),(2,0),(2.5,0.5),(2.5,-0.5),(3,0)}
\fill \x circle (3pt);
\draw (0,0) -- (0.5,0.5) -- (1,0) -- (0.5,-0.5) -- (0,0);
\draw (1,0) -- (1.5,0.5) -- (2,0) -- (1.5,-0.5) -- (1,0);
\draw (2,0) -- (2.5,0.5) -- (3,0) -- (2.5,-0.5) -- (2,0);
\draw (2,0) -- (1,0);
\draw (0.5,0.5) -- (0.5,-0.5);
\draw (2.5,0.5) -- (2.5,-0.5);
\end{tikzpicture} \quad;\quad 
F_{3e} = \begin{tikzpicture}[baseline=-1mm,scale=0.7]
\foreach \x in {(0,0),(1,0),(0.5,0.5),(0.5,-0.5),(1.5,0.5),(1.5,-0.5),(2,0),(2.5,0.5),(2.5,-0.5),(3,0)}
\fill \x circle (3pt);
\draw (0,0) -- (0.5,0.5) -- (1,0) -- (0.5,-0.5) -- (0,0);
\draw (1,0) -- (1.5,0.5) -- (2,0) -- (1.5,-0.5) -- (1,0);
\draw (2,0) -- (2.5,0.5) -- (3,0) -- (2.5,-0.5) -- (2,0);
\draw (2,0) -- (3,0);
\draw (1.5,0.5) -- (1.5,-0.5);
\draw (0.5,0.5) -- (0.5,-0.5);
\end{tikzpicture}\quad;\quad 
F_{3f} = \begin{tikzpicture}[baseline=-1mm,scale=0.7]
\foreach \x in {(0,0),(1,0),(0.5,0.5),(0.5,-0.5),(1.5,0.5),(1.5,-0.5),(2,0),(2.5,0.5),(2.5,-0.5),(3,0)}
\fill \x circle (3pt);
\draw (0,0) -- (0.5,0.5) -- (1,0) -- (0.5,-0.5) -- (0,0);
\draw (1,0) -- (1.5,0.5) -- (2,0) -- (1.5,-0.5) -- (1,0);
\draw (2,0) -- (2.5,0.5) -- (3,0) -- (2.5,-0.5) -- (2,0);
\draw (1.5,0.5) -- (1.5,-0.5);
\draw (0.5,0.5) -- (0.5,-0.5);
\draw (2.5,0.5) -- (2.5,-0.5);
\end{tikzpicture}\\
F_{3g} &=\,\, \begin{tikzpicture}[baseline=-1mm,scale=0.7]
\foreach \x in {(0,0),(0.7,0),(0.7,0.7),(0,0.7),(0.7,-0.7),(1.4,-0.7),(1.4,0),(2.1,0),(1.4,0.7),(2.1,0.7)}
\fill \x circle (3pt);
\draw (0,0) rectangle (0.7,0.7);
\draw (0.7,-0.7) rectangle (1.4,0);
\draw (1.4,0) rectangle (2.1,0.7);
\draw (0,0.7) -- (1.4,-0.7);
\draw (1.4,0.7) -- (2.1,0); 
\end{tikzpicture}\quad;\quad
F_{3h} =\,\, \begin{tikzpicture}[baseline=-1mm,scale=0.7]
\foreach \x in {(0,0),(0.7,0),(0.7,0.7),(0,0.7),(0.7,-0.7),(1.4,-0.7),(1.4,0),(2.1,0),(1.4,0.7),(2.1,0.7)}
\fill \x circle (3pt);
\draw (0,0) rectangle (0.7,0.7);
\draw (0.7,-0.7) rectangle (1.4,0);
\draw (1.4,0) rectangle (2.1,0.7);
\draw (0,0.7) -- (1.4,-0.7);
\draw (1.4,0) -- (2.1,0.7); 
\end{tikzpicture}\quad;\quad
F_{3i} =\,\, \begin{tikzpicture}[baseline=-1mm,scale=0.7]
\foreach \x in {(0,0),(0.7,0),(0.7,0.7),(0,0.7),(0.7,-0.7),(1.4,-0.7),(1.4,0),(2.1,0),(1.4,0.7),(2.1,0.7)}
\fill \x circle (3pt);
\draw (0,0) rectangle (0.7,0.7);
\draw (0.7,-0.7) rectangle (1.4,0);
\draw (1.4,0) rectangle (2.1,0.7);
\draw (0,0) -- (0.7,0.7);
\draw (0.7,0) -- (1.4,-0.7);
\draw (1.4,0) -- (2.1,0.7); 
\end{tikzpicture}\quad;\quad
F_{3j} =\,\, \begin{tikzpicture}[baseline=-1mm,scale=0.7]
\foreach \x in {(0,0),(0.7,0),(0.7,0.7),(0,0.7),(0.7,-0.7),(1.4,-0.7),(1.4,0),(2.1,0),(1.4,0.7),(2.1,0.7)}
\fill \x circle (3pt);
\draw (0,0) rectangle (0.7,0.7);
\draw (0.7,-0.7) rectangle (1.4,0);
\draw (1.4,0) rectangle (2.1,0.7);
\draw (0,0) -- (0.7,0.7);
\draw (0.7,0) -- (1.4,-0.7);
\draw (1.4,0.7) -- (2.1,0); 
\end{tikzpicture}\\
F_{3k} &=\!\!\begin{tikzpicture}[baseline=1mm,scale=0.7]
\foreach \x in {(0,0),(-1,0),(1,0),(-0.5,-0.7),(0.5,-0.7),(-1.5,-0.7),(1.5,-0.7),(0.5,0.7),(-0.5,0.7),(0,1.4)}
\fill \x circle (3pt);
\draw (0,1.4) -- (0.5,0.7) -- (-0.5,-0.7) -- (-1.5,-0.7) -- (-1,0) -- (1,0) -- (1.5,-0.7) -- (0.5,-0.7) -- (-0.5,0.7) -- (0,1.4);
\draw (-1.5,-0.7) -- (0,0) -- (0,1.4);
\draw (1.5,-0.7) -- (0,0);
\end{tikzpicture}\quad;\quad
F_{3l} =\!\!\begin{tikzpicture}[baseline=1mm,scale=0.7]
\foreach \x in {(0,0),(-1,0),(1,0),(-0.5,-0.7),(0.5,-0.7),(-1.5,-0.7),(1.5,-0.7),(0.5,0.7),(-0.5,0.7),(0,1.4)}
\fill \x circle (3pt);
\draw (0,1.4) -- (0.5,0.7) -- (-0.5,-0.7) -- (-1.5,-0.7) -- (-1,0) -- (1,0) -- (1.5,-0.7) -- (0.5,-0.7) -- (-0.5,0.7) -- (0,1.4);
\draw (-1.5,-0.7) -- (0,0) -- (0,1.4);
\draw (1,0) -- (0.5,-0.7);
\end{tikzpicture}\quad;\quad
F_{3m} =\!\!\begin{tikzpicture}[baseline=1mm,scale=0.7]
\foreach \x in {(0,0),(-1,0),(1,0),(-0.5,-0.7),(0.5,-0.7),(-1.5,-0.7),(1.5,-0.7),(0.5,0.7),(-0.5,0.7),(0,1.4)}
\fill \x circle (3pt);
\draw (0,1.4) -- (0.5,0.7) -- (-0.5,-0.7) -- (-1.5,-0.7) -- (-1,0) -- (1,0) -- (1.5,-0.7) -- (0.5,-0.7) -- (-0.5,0.7) -- (0,1.4);
\draw (0,0) -- (0,1.4);
\draw (-1,0) -- (-0.5,-0.7);
\draw (1,0) -- (0.5,-0.7);
\end{tikzpicture}\quad;\quad
F_{3n} =\!\!\begin{tikzpicture}[baseline=1mm,scale=0.7]
\foreach \x in {(0,0),(-1,0),(1,0),(-0.5,-0.7),(0.5,-0.7),(-1.5,-0.7),(1.5,-0.7),(0.5,0.7),(-0.5,0.7),(0,1.4)}
\fill \x circle (3pt);
\draw (0,1.4) -- (0.5,0.7) -- (-0.5,-0.7) -- (-1.5,-0.7) -- (-1,0) -- (1,0) -- (1.5,-0.7) -- (0.5,-0.7) -- (-0.5,0.7) -- (0,1.4);
\draw (-0.5,0.7) -- (0.5,0.7);
\draw (-1,0) -- (-0.5,-0.7);
\draw (1,0) -- (0.5,-0.7);
\end{tikzpicture}
\end{align*}\medskip

\noindent Equation \eqref{eq:singular_1} shows that on any singular graphon $\gamma$, the three following observables vanish:
\begin{align*}
\kappa_2(F_1,F_{2a}) &= 4\,F_{3a} + 4\,F_{3b} + 8\,F_{3g} + 8\,F_{3h} + 2\,F_{3k} + 2\,F_{3l}- 28\,F_1\,F_{2a};\\
\kappa_2(F_1,F_{2b}) &= 2\,F_{3b} + 2\,F_{3c} + 2\,F_{3d}+2\,F_{3e} + 4\,F_{3h}+ 8\,F_{3i}+ 4\,F_{3j}+2\,F_{3l}+2\,F_{3m} - 28\,F_1\,F_{2b};\\
\kappa_2(F_1,F_{2c}) &= 4\,F_{3e}+4\,F_{3f} + 8\,F_{3g} +8\,F_{3j} + 2\,F_{3m}+2\,F_{3n}- 28\,F_1\,F_{2c}.
\end{align*}
It turns out that there is an additional equation satisfied by the observables of those graphs which can be obtained by joining three diamond-shaped graphs $F_1$. Thus, if $\gamma$ is a singular graphon, then 
\begin{align*}
\frac{\kappa_3(F_1,F_1,F_1)}{8}&=3\,(F_{3a} + 2\,F_{3b}+ F_{3c}+F_{3d} + 2\,F_{3e}+F_{3f}) +12\,(F_{3g}+F_{3h}+F_{3i}+F_{3j})  \\
&\quad+ (F_{3k} + 3\,F_{3l}+3\,F_{3m}+F_{3n}) -20\,F_1\,(F_{2a}+2 \,F_{2b}+ F_{2c})\end{align*}
vanishes on $\gamma$. This equation is independent from the the vanishing of the three covariance observables $\kappa_2(F_1,F_{2a})$, $\kappa_2(F_1,F_{2b})$ and $\kappa_2(F_1,F_{2c})$. Indeed, up to a scalar multiple, the only linear combination of these three observables which yields a term proportional to $F_1\,(F_{2a}+2 \,F_{2b}+ F_{2c})$ is
\begin{align*}
&\frac{\kappa_2(F_1,F_{2a}+2\,F_{2b}+F_{2c})}{2} \\
&= 2\,(F_{3a} + 2\,F_{3b}+ F_{3c}+F_{3d} + 2\,F_{3e}+F_{3f}) +8\,(F_{3g}+F_{3h}+F_{3i}+F_{3j}) \\
&\quad + (F_{3k} + 3\,F_{3l}+3\,F_{3m}+F_{3n}) -14\,F_1\,(F_{2a}+2 \,F_{2b}+ F_{2c}),
\end{align*}
which is not proportional to the observable written above. These hidden equations for the graph densities of singular graphons will help us to prove our central limit theorem. We believe that they might be of independent interest in order to solve the Conjecture, and to study the singular points of other classes of random models; see the next paragraph.\medskip

The reader might wonder why we considered above all the possible joins of two or three diamond-shaped graphs $F_1$; obviously, there are simpler graphs which could have been used as examples (\emph{e.g.}, the square or the triangle). The reason is the following: independently from the equations coming from the vanishing of the leading terms of the variance and of the higher cumulants of the graph densities of the singular $W$-random graph models, one can write much simpler equations for the graph densities $t(F,\gamma)$ of a singular graphon $\gamma$ when $F$ is a \emph{join-transitive motive} (and this is not the case of $F_1$). These simpler equations are detailed in Section \ref{sec:spectral} (see Theorem \ref{thm:simple_equations}), and they imply in particular the following: if $\gamma$ is a singular graphon with edge density $p$, then for any tree $T$, we have $t(T,\gamma)=p^{|E_T|}$ (Corollary \ref{cor:tree_densities}). It should be noticed that the tree densities of the graphons are not known to determine the graphons themselves. However, they are related to the spectral properties of the integral operator 
\begin{align*}
T_g : \leb^2([0,1]) &\to \leb^2([0,1])\\
f &\mapsto \left(T_g f(\cdot) = \int_0^1 g(x,y)\,f(y)\DD{y} \right)
\end{align*}
associated to a graph function $g$ representative of $\gamma$. The constant graphon $\gamma_p$ yields the operator $T_g = p\,\pi_{\mathrm{constant}}$, where $\pi_\mathrm{constant}$ is the orthogonal projection on the one-dimensional space of constant functions. As a consequence, in order to prove that a graphon is constant, it suffices to estimate the (expectation of the) characteristic polynomial 
$$\det\!\left(I_n + z\,\frac{A_n(\gamma)}{n}\right),$$
where $A_n(\gamma)$ is the \emph{adjacency} matrix of the random graph $G_n(\gamma)$: see Theorem \ref{thm:spectral_constant_graphon}. On the other hand, by combining a form of the matrix-tree theorem with the result on tree densities, we can compute for any singular graphon $\gamma$ with edge density $p$ the expectation of the generalised characteristic polynomial
$$\det\!\left((1+pz)I_n + zE_n - z\,\frac{L_n(\gamma)}{n}\right),$$ 
where $L_n(\gamma)$ is the \emph{Laplacian} matrix of $G_n(\gamma)$, and $E_n = \diag(\eps_1,\ldots,\eps_n)$ is an arbitrary diagonal matrix (Proposition \ref{prop:expectation_laplacian}). Notice that if $E_n = \diag(\eps_1(\gamma),\ldots,\eps_n(\gamma))$ with
$$\eps_i(\gamma) = \big(\text{rescaled and recentered degree of $i$ in $G_n(\gamma)$}\big)=\frac{\deg(i,G_n(\gamma))}{n}-p, $$
then $(1+pz)I_n + zE_n - z\,\frac{L_n(\gamma)}{n} = I_n + z\,\frac{A_n(\gamma)}{n}$. Denote $D\!L_n(z,\gamma;\eps_1,\ldots,\eps_n)$ the determinant of the random matrix with diagonal component $E_n=\diag(\eps_1,\ldots,\eps_n)$, and $F_n(z,\gamma;\eps_1,\ldots,\eps_n) = \esper[D\!L_n(z,\gamma;\eps_1,\ldots,\eps_n)]$.  If $\gamma$ is a singular graphon with edge density $p$, then:
\begin{itemize}
    \item One has a simple formula for $F_n(z,\gamma;\eps_1,\ldots,\eps_n)$.
    \item In order to prove that $\gamma=\gamma_p$, it suffices to estimate $\esper[D\!L_n(z,\gamma;\eps_1(\gamma),\ldots,\eps_n(\gamma))]$.
\end{itemize}
The problem which arises is that the random function $D\!L_n(\gamma)$ is correlated with the random variables $\eps_1(\gamma),\ldots,\eps_n(\gamma)$, so in particular we do not expect that
$$\esper[D\!L_n(z,\gamma;\eps_1(\gamma),\ldots,\eps_n(\gamma))] $$
and
$$\esper[F_n(z,\gamma;\eps_1(\gamma),\ldots,\eps_n(\gamma))]$$
are the same. Nonetheless, our computations show that there is a small hope of proving the Conjecture with a spectral approach if one is able to understand the correlations between the spectrum of the matrices $A_n(\gamma)$ and $L_n(\gamma)$ on the one side, and the (rescaled) random degrees $\eps_1(\gamma),\ldots,\eps_n(\gamma)$ on the other side.\medskip

In Section \ref{sec:convergence}, we establish our Main Theorem by exhibiting an explicit constant $C_r$ in the estimate $|\kappa^{(r)}(Z_n)| = O(n^{-r})$ satisfied by the singular graphons. The proof of this upper bound (Theorem \ref{thm:upper_bound}) relies on arguments similar to those of \cite{DCM21}*{Theorem 5.6}, and it is an adaptation of combinatorial techniques developed in \cite{FMN16}*{Section 9}. It also involves another generalisation of the matrix-tree theorem; this generalisation and the one used in order to compute the expected determinants related to the Laplacian matrices $L_n(\gamma)$  are recalled in an Appendix at the end of the article. The constants $C_r$ which we obtain are good enough so that we can resum the cumulant generating series, thereby proving the convergence on a disk of the Laplace transforms of the variables $nZ_n$. We also compute the limit of $n^r\,\kappa^{(r)}(Z_n)$ when the motive chosen is the graph $\edgegraph$, and when $r \in \{2,3,4\}$. These computations rely in particular on the equations from Section \ref{sec:spectral} satisfied by the graph densities of the join-transitive motives in a singular graphon. They enable us to prove that the only singular graphons with asymptotically normal random edge densities are the constant graphons (Proposition \ref{prop:gaussian_edge_density}).
\medskip

\subsection{Identification of the singular points of the mod-Gaussian moduli spaces} The theoretical results and methods of this article might prove useful in order to study other similar classes of random models. Indeed, the articles \cite{FMN16}*{Section 9} and \cite{FMN20} provide us with general techniques that enable one to prove that a class of random models has observables which are asymptotically normal with a fast decay of cumulants, and with an explicit constant in the $O(\cdot)$ in Equation \eqref{eq:FD}. 
Therefore, Theorems \ref{thm:additional_equations} and \ref{thm:upper_bound} adapt \emph{mutatis mutandi} to all the models considered in \cite{FMN20}, and also to the models of random metric spaces considered in \cite{DCM21}. In particular, this might allow us to identify the singular points of the mod-Gaussian moduli spaces considered in these two papers. More precisely:
\begin{itemize}
    \item The singular points of the space of graphons are studied in the present paper.
    \item The singular points of the space of permutons are unknown, and it might be that there are no singular points. The analogue for permutons of Theorem \ref{thm:additional_equations} might be the key in order to prove this result.
    \item The Thoma simplex is a bi-infinite dimensional simplex which parametrises the central measures on integer partitions and the extremal characters of the infinite symmetric group; the observables of these random models are random character values of the symmetric groups $\sym(n)$, $n \geq 1$. It is not very difficult to prove that the singular points of the Thoma simplex, which correspond to renormalised random character values with variance $O(n^{-2})$, are parametrised by $\Z$: the integer $0$ corresponds to the Plancherel measures, the positive integer $n \geq 1$ corresponds to the Schur--Weyl measures associated to the Thoma parameter $((\frac{1}{n},\ldots,\frac{1}{n},0,0\ldots),(0,0,\ldots))$, and the negative integer $-n \leq -1$ corresponds to the dual central measure associated to the Thoma parameter $((0,0,\ldots),(\frac{1}{n},\ldots,\frac{1}{n},0,0\ldots))$. This is a relatively easy computation by using the Kerov--Olshanski algebra of polynomial observables, see \cite{KO94} and \cite{Mel17}*{Chapter 7}.
    \item The Gromov--Hausdorff--Prohorov space of metric measure spaces parametrises the complete metric spaces endowed with a probability measure $(X,d,\mu)$, and these spaces can be approximated by random discrete spaces. We conjecture that the only singular points of the GHP space are the compact homogeneous spaces $X=G/K$ with $G$ compact group and $K$ closed subgroup. An important step towards this conjecture has been proved in \cite{DCM21}: if one replaces the equivalent of Equation \eqref{eq:singular_3} in this setting by a slightly stronger condition, then the solutions are indeed the compact homogeneous spaces. The replacement of the equation of singularity by a stronger condition could be justified by using the hidden equations for the observables, and an analogue in the setting of random metric spaces of our equations for join-transitive motives. In the opposite direction, \cite{DCM21} contains the equivalent in the setting of random metric spaces of what is really missing in order to solve our Conjecture: namely, the implication that if $t(T,\gamma)=p^{|E_T|}$ for any tree, then $t(G,\gamma)=p^{|E_G|}$ for any graph.
\end{itemize}
We plan to address the conjectures evoked in the second and fourth item above in forthcoming works.

\begin{remark}
The project planned above enables us to clarify a bit the status of our Main Theorem with respect to the Conjecture:
\begin{itemize}
    \item Our main result and the computations which lead to it show that the singular gra\-phons share many properties with the constant (Erd\H{o}s--Rényi) graphons. Thus, it stands clearly in favor of the Conjecture.
    \item However, if the Conjecture holds, then our Main Theorem is contained in the previously known results from \cites{Jan88,Now89,FMN16}. So in a sense, our Main Theorem is stronger \emph{if the Conjecture does not hold}: we would have then showed that the singular graphons which are not constant still satisfy many results known for the constant graphons.
    \item Regardless of the Conjecture, the techniques developed throughout the article hold in the more general setting of mod-Gaussian moduli spaces, and this might be the main interest of our paper.
\end{itemize}
\end{remark}
\medskip

\noindent \textbf{Acknowledgements.} The author would like to address his thanks to his colleagues of the probability team of the Laboratoire de Mathématiques d'Orsay for several enlightening discussions around the graphon models and the matrix-tree theorem. He is also thankful to the departments of mathematics of the University College Dublin and of the École Polytechnique, where he was invited to present preliminary versions of this work; and to his colleagues Valentin Féray and Ashkan Nikeghbali, with whom he discussed about several aspects of the conjecture.
\bigskip

\section{Moments and cumulants of the subgraph counts}\label{sec:moments}
In this section, we fix a graphon $\gamma \in \Gspa$, and given a finite simple graph $F$ on $k$ vertices, we denote $S_n(F) = n^k\,t(F,G_n(\gamma)) = |\hom(F,G_n(\gamma))|$. We are going to explain how to compute the moments and the cumulants of these random variables. These computations are similar to those performed in \cite{FMN20}*{Section 5.1}, but with a different focus: we want to explain why the condition of being singular (Equations \eqref{eq:singular_1} or \eqref{eq:singular_2} or \eqref{eq:singular_3}) implies additional equations for the graph observables $t(F,\gamma)$ (see Theorem \ref{thm:additional_equations}). These hidden equations will be used in Section \ref{sec:convergence} in order to prove the central limit theorem for singular graphons. It turns out that they also imply that $t(T,\gamma)=p^{|E_T|}$ for any singular graphon $\gamma$ with edge density $p$ and for any tree $T$ with size less than $11$; see the discussion at the end of Example \ref{ex:cutting_trees}. As explained in the introduction, similar hidden equations hold for any singular point of a mod-Gaussian moduli space in the sense of \cite{FMN20}*{Section 6}.
\medskip

\subsection{Moments of the graph densities}\label{sub:moments_graph_densities}
The computation of the moments of the random variables $S_n(F)$ is related to the operation of contraction of a graph along a set partition of its vertices. Given a set $V$, we denote $\pym(V)$ the set of set partitions of $V$, and we set $\pym(k) = \pym(\lle 1,k\rre)$. For instance, the $5$ elements of $\pym(3)$ are $\{1,2,3\}$, $\{1,2\}\sqcup \{3\}$, $\{1,3\}\sqcup \{2\}$, $\{2,3\}\sqcup \{1\}$ and $\{1\} \sqcup \{2\}\sqcup \{3\}$. Consider a graph $F$ on the set of integers $\lle 1,k\rre$, and a set partition $\pi=\pi_1 \sqcup \pi_2 \sqcup \cdots \sqcup \pi_\ell$ in $\pym(k)$. The \emph{contracted graph} $F \downarrow \pi$ is the simple graph on the set of vertices $\{1,2,\ldots,\ell\}$ with one edge between $i$ and $j$ if there exists $a \in \pi_i$ and $b \in \pi_j$ such that $\{a,b\} \in E_F$. Here, we also allow \emph{loops}, so we create a loop $(i,i)$ in $F \downarrow \pi$ if there are two elements $a \neq b$ in $\pi_i$ such that $\{a,b\} \in E_F$. The calculation of the first moment of $S_n(F)$ involves the set partitions $\pi$ which do not create loops:

\begin{proposition}\label{prop:moment}
For any graph $F$ on $k$ vertices, we have 
$$\esper[S_n(F)] = \sum_{\substack{\pi\in \pym(k) \\ F \downarrow \pi \text{ is loopless}}} n^{\downarrow \ell(\pi)}\,t(F\downarrow \pi,\gamma),$$
where $\ell(\pi)$ denotes the number of parts of a set partition $\pi$, and $n^{\downarrow l}=n(n-1)(n-2)\cdots (n-l+1)$.
\end{proposition}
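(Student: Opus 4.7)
The plan is to start from the tautological expansion
\[
S_n(F) = |\hom(F,G_n(\gamma))| = \sum_{\phi : \lle 1,k\rre \to \lle 1,n\rre} \mathbbm{1}(\phi \in \hom(F,G_n(\gamma)))
\]
and then partition the sum according to the fiber structure of $\phi$. Concretely, every map $\phi$ induces a set partition $\pi(\phi) \in \pym(k)$ whose blocks are the level sets $\phi^{-1}(v)$; conversely, a map $\phi$ with $\pi(\phi)=\pi=\pi_1\sqcup\cdots\sqcup\pi_\ell$ is the same data as an injection from the $\ell$ blocks into $\lle 1,n\rre$, and there are exactly $n^{\downarrow \ell}$ such injections.

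Next I would argue that only partitions $\pi$ for which $F\downarrow\pi$ is loopless contribute. Indeed, if there exist $a\neq b$ in the same block of $\pi$ with $\{a,b\}\in E_F$, then a morphism $\phi$ with $\pi(\phi)=\pi$ would require the loop $\{\phi(a),\phi(a)\}$ to belong to $E_{G_n(\gamma)}$, which is impossible since $G_n(\gamma)$ is a simple graph. Hence such $\pi$ contribute zero to the sum.

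For a loopless $\pi$ with blocks labeled $\pi_1,\ldots,\pi_\ell$ and an injection $\iota:\lle 1,\ell\rre \to \lle 1,n\rre$, the associated map $\phi$ is a morphism if and only if, for every edge $\{i,j\}$ of the contracted graph $F\downarrow\pi$, we have $\{\iota(i),\iota(j)\} \in E_{G_n(\gamma)}$. Recall from the definition of the $W$-random graph that, conditionally on $(X_1,\ldots,X_n)$, the indicators $\mathbbm{1}(\{a,b\} \in E_{G_n(\gamma)})$ are independent Bernoulli variables of parameter $g(X_a,X_b)$. Using the independence of the $U_{ab}$ and the independence of the $X_a$, this yields
\[
\proba\big(\phi \in \hom(F,G_n(\gamma))\big) = \esper\!\left[\prod_{\{i,j\} \in E_{F\downarrow\pi}} g(X_{\iota(i)},X_{\iota(j)})\right] = \int_{[0,1]^{\ell}} \!\prod_{\{i,j\} \in E_{F\downarrow \pi}} g(x_i,x_j) \DD{x_1}\cdots\DD{x_\ell},
\]
which is precisely $t(F\downarrow\pi,\gamma)$; note that this quantity depends only on the isomorphism type of $F\downarrow\pi$, so the arbitrary ordering of the blocks of $\pi$ is immaterial. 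Summing over all loopless $\pi$ and multiplying by the number $n^{\downarrow \ell(\pi)}$ of injections $\iota$ gives the claimed identity.

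There is no real obstacle here: the only subtlety is the bookkeeping step showing that a function $\phi$ is equivalent to the pair $(\pi(\phi),\iota)$, together with the observation that simplicity of $G_n(\gamma)$ enforces the loopless condition on $\pi(\phi)$. Everything else is an application of the conditional independence structure of the graphon model.
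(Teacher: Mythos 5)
Your proof is correct and follows essentially the same route as the paper: decompose the sum over maps $\phi$ according to the induced set partition and the injection on blocks, discard partitions creating loops because $G_n(\gamma)$ is simple, and evaluate the probability of each contracted pattern via the conditional independence of the edge indicators given $(X_1,\ldots,X_n)$. The paper phrases the last step through the coherence property of the $W$-random graphs (the restriction to $\ell$ vertices has the law of $G_\ell(\gamma)$), but this is the same computation you carry out directly.
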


\begin{proof}
It is convenient to consider $S_n(F)$ as a sum of $n^k$ dependent Bernoulli random variables $A_{\phi}(F)$ labelled by the maps $\phi : \lle 1,k \rre \to \lle 1,n\rre$:
$$A_{\phi}(F) = \begin{cases}
    1 &\text{if }\phi \text{ is a morphism from $F$ to }G_n(\gamma),\\
    0 & \text{otherwise}.
\end{cases}$$
By definition of a graph morphism, we have 
$$A_\phi(F) = \prod_{\{a,b\} \in E_F} 1_{\big(\{\phi(a),\phi(b)\} \in E_{G_n(\gamma)}\big)}.$$
We denote $\pi=\pi_1\sqcup \cdots \sqcup \pi_l$ the set partition of size $k$ associated to the equivalence relation $a \sim_\pi b \iff \phi(a)=\phi(b)$, and $\psi : \lle 1,l\rre \to \lle 1,n\rre$ the injective map defined by 
$\psi(i) = \phi(a)$ for any $a \in \pi_i$. If $a \in \pi_i$ and $b \in \pi_j$, then 
\begin{align*}
\big(\{\phi(a),\phi(b)\} \in E_{G_n(\gamma)} \big) &\iff \big(\{\psi(i),\psi(j)\} \in E_{G_n(\gamma)}).
\end{align*}
Therefore, $\phi$ is a morphism from $F$ to $G_n(\gamma)$ if and only if $\psi$ is a morphism from $F \downarrow \pi$ to $G_n(\gamma)$. Moreover, given a set partition $\pi \in \pym(k)$ of length $l$ and an injective map $\psi : \lle 1,l\rre \to \lle 1,n\rre$, we can reconstruct the map $\phi$. So,
$$\sum_{\phi : \lle 1,k\rre \to \lle 1,n\rre} A_\phi(F) = \sum_{\substack{\pi \in \pym(k) \\ \psi : \lle 1,\ell(\pi)\rre \to \lle 1,n\rre \\ \psi \text{ injective map}}} A_{\psi}(F \downarrow \pi).$$
Now, the $W$-random graphs have a property of coherence: the law of the restriction of the graph $G_n(\gamma)$ to any subset $\{\psi(1),\psi(2),\ldots,\psi(l)\}$ of $\lle 1,n\rre$ is the law of $G_l(\gamma)$. This is trivial from the definition, and it is a part of a characterisation of the graphon models $(G_n(\gamma))_{n \geq 1}$, see \cite{LS06}*{Theorem 2.7}. Therefore, for any injective map $\psi : \lle 1,l\rre \to \lle 1,n\rre$, we have
\begin{align*}
\esper[A_{\psi}(F \downarrow \pi)] &= \proba[F \downarrow \pi \text{ is a subgraph of }G_l(\gamma)]\\
&=\begin{cases}
    \esper\!\left[\prod_{\{i,j\}\in E_{F\downarrow \pi}}1_{\big(U_{ij} \leq g(X_i,X_j)\big)}\right] &\text{if $F \downarrow \pi$ is loopless},\\
    0 &\text{if $F\downarrow \pi$ contains a loop}. 
\end{cases}
\end{align*}
The vanishing of the expectation in the case of loops comes from the fact that by definition, $G_n(\gamma)$ is loopless. On the other hand, the expectation in the loopless case is $$\int_{[0,1]^l} \left(\prod_{\{i,j\}\in E_{F\downarrow \pi}} g(x_i,x_j)\right)\DD{x_1}\cdots \DD{x_l} = t(F\downarrow \pi,\gamma).$$ 
Finally, there are $n^{\downarrow l}$ injective maps from $\lle 1,l\rre$ to $\lle 1,n\rre$, whence the identity.
\end{proof}

\begin{example}\label{ex:expectations_bi_edge}
We have \begin{align*}
\esper[S_n(\edgegraph)] &= n(n-1)\,t(\edgegraph,\,\gamma);\\
\esper[S_n(\begin{tikzpicture}[baseline=-1mm,scale=0.5]
\foreach \x in {(0,0),(1,0),(2,0)}
\fill \x circle (4pt);
\draw (0,0) -- (1,0) -- (2,0);
\end{tikzpicture})] &= n(n-1)(n-2)\,t(\begin{tikzpicture}[baseline=-1mm,scale=0.5]
\foreach \x in {(0,0),(1,0),(2,0)}
\fill \x circle (4pt);
\draw (0,0) -- (1,0) -- (2,0);
\end{tikzpicture},\,\gamma) + n(n-1)\,t(\edgegraph,\,\gamma).
\end{align*}
 Indeed, in the first case, the only loopless contraction of the graph $F = \edgegraph$
is $F$ itself, and in the second case where $F= \begin{tikzpicture}[baseline=-1mm,scale=0.5]
\foreach \x in {(0,0),(1,0),(2,0)}
\fill \x circle (4pt);
\draw (0,0) -- (1,0) -- (2,0);
\end{tikzpicture}$, in addition to $F$ itself, we can contract $F$ without creating loops by identifying the two ends of the line.
\end{example}
\medskip
 
Since $\prod_{r=1}^s\hom(F_r,G) = \hom\left(\bigsqcup_{r=1}^s F_r,G\right)$, Proposition \ref{prop:moment} also yields the joint moment of any set of random variables $S_n(F_1),S_n(F_2),\ldots,S_n(F_s)$:
$$\esper\!\left[\prod_{r=1}^s S_n(F_r)\right] = \sum_{\pi} n^{\downarrow \ell(\pi)}\,1_{\left(\bigsqcup_{r=1}^s F_r\right)\downarrow \pi \text{ is loopless}}\,\,t\!\left(\left(\bigsqcup_{r=1}^s F_r\right)\downarrow \pi,\gamma\right)$$
where the sum runs over set partitions $\pi$ of the set of vertices of $\bigsqcup_{r=1}^s F_r$. As an application of these formulas, let us see how to compute the leading terms in the two first moments of a random density $t(F,G_n(\gamma))$. For the first moment, notice that in the expansion of $\esper[S_n(F)]$, the only term of degree $k=|V_F|$ in $n$ is the one where $\pi=\{1\}\sqcup \{2\} \sqcup \cdots \sqcup \{k\}$ and $F \downarrow \pi = F$; all the other terms yield polynomials of degree smaller than $k-1$. So,
$$\esper[t(F,G_n(\gamma))] = \frac{\esper[S_n(F)]}{n^k}  = \frac{n^{\downarrow k}}{n^k}\,t(F,\gamma) + O\!\left(\frac{1}{n}\right) = t(F,\gamma) + O\!\left(\frac{1}{n}\right).$$
For the second moment, the interesting quantity to look at is $\cov(t(F_1,\gamma),t(F_2,\gamma))$, where $F_1$ and $F_2$ are two finite graphs on $k_1$ and $k_2$ vertices. We shall denote $V_{F_1}=\lle 1,k_1\rre$ and $V_{F_2}=\lle 1',k_2'\rre$. Notice that $n^{\downarrow l} = n^l - \frac{l(l-1)}{2}\, n^{l-1} + O(n^{l-2})$. Therefore, the leading terms of the joint moment $\esper[S_n(F_1)\,S_n(F_2)]$ are:
\begin{align*}
& n^{\downarrow k_1+k_2}\,t(F_1\,F_2,\gamma) + \sum_{\pi \in \pym_{\mathrm{pair}}(V_{F_1}\sqcup V_{F_2})} n^{\downarrow k_1+k_2-1}\,t((F_1\,F_2) \downarrow \pi,\gamma) + O(n^{k_1+k_2-2}) \\
&= n^{k_1+k_2}\,t(F_1\,F_2,\gamma) + \sum_{\pi \in \pym_{\mathrm{pair}}(V_{F_1}\sqcup V_{F_2})} n^{k_1+k_2-1}\,(t((F_1\,F_2) \downarrow \pi,\gamma)-t(F_1\,F_2,\gamma)) + O(n^{k_1+k_2-2}),
\end{align*}
where $\pym_{\text{pair}}(V)$ denotes the subset of $\pym(V)$ whose elements are the set partitions which consists in one pair $\{a,b\}$ and singletons. Similarly, the leading terms in $\esper[S_n(F_1)]\,\esper[S_n(F_2)]$ of expectations are:
\begin{align*}
&n^{\downarrow k_1}n^{\downarrow k_2}\,t(F_1\,F_2,\gamma) + \sum_{\pi \in \pym_{\mathrm{pair}}(V_{F_1})} n^{\downarrow k_1-1}n^{\downarrow k_2}\,t((F_1 \downarrow \pi)\,F_2,\gamma) \\
&+ \sum_{\pi \in \pym_{\mathrm{pair}}(V_{F_2})} n^{\downarrow k_1}n^{\downarrow k_2-1}\,t(F_1\,(F_2 \downarrow \pi),\gamma) + O(n^{k_1+k_2-2})\\
&=n^{k_1+k_2}\,t(F_1\,F_2,\gamma) + \sum_{\pi \in \pym_{\mathrm{pair}}(V_{F_1})} n^{ k_1+k_2-1}\,(t((F_1 \downarrow \pi)\,F_2,\gamma) - t(F_1\,F_2,\gamma)) \\
&\quad + \sum_{\pi \in \pym_{\mathrm{pair}}(V_{F_2})} n^{k_1+k_2-1}\,(t(F_1\,(F_2 \downarrow \pi),\gamma) -t(F_1\,F_2,\gamma))+ O(n^{k_1+k_2-2}).
\end{align*}
When subtracting these two expressions, the terms of degree $k_1+k_2$ disappear, and in degree $k_1+k_2-1$, the set partitions that remain are those that consist in singletons and one pair $\{a,b\}$ with $a \in \lle 1,k_1\rre$ and $b \in \lle 1',k_2'\rre$. Then, $(F_1\,F_2)\downarrow \pi = (F_1 \join F_2)(a,b)$, so we recover the expression of $\kappa_2(F_1,F_2)$:
\begin{align*}
\cov(t(F_1,G_n(\gamma)),t(F_2,G_n(\gamma))) & = \frac{\cov(S_n(F_1),S_n(F_2))}{n^{k_1+k_2}} \\
&= \frac{1}{n} \sum_{\substack{1\leq i_1\leq k_1 \\ 1 \leq i_2 \leq k_2}} \big(t((F_1\join F_2)(i_1,i_2),\gamma) - t(F_1\,F_2,\gamma)\big) + O\!\left(\frac{1}{n^2}\right) \\
&= \frac{\kappa_2(F_1,F_2)(\gamma)}{n} + O\!\left(\frac{1}{n^2}\right).
\end{align*}
This explains the discussion of the introduction and our Equation \eqref{eq:singular_3} characterising the singular graphons with small fluctuations of observables.
\medskip

\subsection{Joint cumulants of the graph densities}\label{sub:joint_cumulants}
An important idea introduced in \cite{Jan88} for the study of the fluctuations of the models of random graphs is to look more generally at the asymptotic properties of the \emph{joint cumulants} of the random densities $t(F_1,G_n(\gamma)),\ldots$, $t(F_s,G_n(\gamma))$. Recall that if $X_1,\ldots,X_s$ are bounded random variables, then their joint cumulant is given by the formula:
$$\kappa(X_1,X_2,\ldots,X_s) = \sum_{\pi \in \pym(s)} \mu(\pi)\,\prod_{i=1}^{\ell(\pi)} \esper\!\left[\prod_{j \in \pi_i} X_j\right],$$
where $\mu(\pi) = (-1)^{\ell(\pi)-1}\,(\ell(\pi)-1)!$ is the Möbius function of the lattice of set partitions \cite{LS59}. For instance, the joint cumulant of two random variables is their covariance. As a consequence of the fact that the random variables $S_n(F_i)$ are sums of dependent random variables with a sparse dependency graph, \cite{FMN16}*{Theorem 9.2.2} and \cite{FMN20}*{Lemma 19} proved the following upper bound on cumulants. Given graphs $F_1,\ldots,F_s$ on $k_1,\ldots,k_s$ vertices, assuming without loss of generality that $k_1\leq k_2 \leq \cdots \leq k_s$, we have for any graphon $\gamma$:
\begin{equation}
\left|\kappa(S_n(F_1),\ldots,S_n(F_s))\right|\leq 2^{s-1}\,s^{s-2}\,(k_1k_2\cdots k_{s-1})^2\,n^{k_1+\cdots+k_s-s+1}.\tag{BC}\label{eq:bound_cumulant}
\end{equation}
In particular, the joint cumulant of the random variables $S_n(F_1),\ldots,S_n(F_s)$, which is a combination of the joint moments of these variables and therefore should be a polynomial in $n$ of degree smaller than $k_1+k_2+\cdots+k_s$, is in fact a polynomial in $n$ of degree at most $k_1+k_2+\cdots+k_s-(s-1)$. The goal of this paragraph is to give a general expression of the leading term of this polynomial; see Proposition \ref{prop:expansion_cumulant}. This is important because, as we shall explain at the end of this section, if a graphon $\gamma$ is singular, then this leading term vanishes. An improvement of the upper bound \eqref{eq:bound_cumulant} for singular graphons will be the main argument of the proof of our Main Theorem; see Theorem \ref{thm:upper_bound}.\medskip

The computation of the leading term in $\kappa(S_n(F_1),\ldots,S_n(F_s))$ involves hypergraphs and hypertrees, and these combinatorial objects shall play an essential role until the end of this article. Given a finite set $V$, a \emph{hypergraph} $H$ with vertex set $V$ is a finite collection $E$ of hyperedges which are multisets $e=\{v_1,v_2,\ldots,v_d\}$ of elements of $V$, with $d \geq 2$. Beware that we allow multiple occurrences of a vertex $v$ in a hyperedge $e$ of a hypergraph (we then speak of a \emph{hyperloop}), and also that the collection $E$ of hyperedges can contain multiple occurrences of a hyperedge. The degree of a hyperedge $e$ is defined by $\deg e = d-1$, where $d$ is the number of elements in $e$ (counting multiplicities). Thus, a usual edge of a graph is a hyperedge of degree $1$. A hypergraph $H=(V,E)$ is said \emph{connected} if, given two vertices $v\neq w$ in $V$, there is a sequence of hyperedges $e_1,e_2,\ldots,e_r$ in $E$ such that $v \in e_1$, $w \in e_r$ and the intersections $e_i \cap e_{i+1}$ are not empty. By induction on $|V|$, it is easily seen that one has the following generalisation of the inequality $|E|\geq |V|-1$ for connected graphs:
$$\big(H=(V,E) \text{ is a connected hypergraph}\big) \Rightarrow \left(\sum_{e \in E} \deg e \geq |V|-1\right).$$
A \emph{hypertree} is a connected hypergraph $H=(V,E)$ such that $\sum_{e \in E} \deg e = |V|-1$. This implies in particular that $G$ does not contain a hyperloop: every hyperedge $e \in E$ is a set of elements without multiplicities. We denote $\Hset(s)$ the set of labelled hypertrees on $s$ vertices in $\lle 1,s\rre$; the cardinality of $\Hset(s)$ is computed in \cite{Kal99} by using generating series.
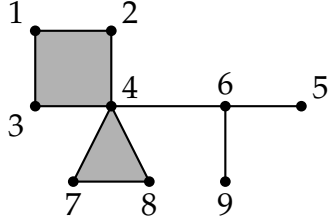
\begin{figure}[ht]
\begin{center}        
\begin{tikzpicture}[scale=1]
\fill [white!70!black] (-1,0) rectangle (0,1);
\fill [white!70!black] (-0.5,-1) -- (0.5,-1) -- (0,0);
\foreach \x in {(0,0),(0,1),(-1,0),(-1,1),(1.5,0),(2.5,0),(1.5,-1),(-0.5,-1),(0.5,-1)}
\fill \x circle (2pt);
\draw [thick] (0,0) -- (-0.5,-1) -- (0.5,-1) -- (0,0) -- (-1,0) -- (-1,1) -- (0,1) -- (0,0) -- (1.5,0) -- (2.5,0);
\draw [thick] (1.5,0) -- (1.5,-1);
\draw (-1.25,1.25) node {$1$};
\draw (0.25,1.25) node {$2$};
\draw (0.25,0.25) node {$4$};
\draw (-1.25,-0.25) node {$3$};
\draw (-0.5,-1.3) node {$7$};
\draw (0.5,-1.3) node {$8$};
\draw (1.5,-1.3) node {$9$};
\draw (1.5,0.3) node {$6$};
\draw (2.75,0.25) node {$5$};
\end{tikzpicture}
\caption{A hypertree on $9$ vertices, with $3$ hyperedges of degree $1$, one hyperedge of degree $2$ and one hyperedge of degree $3$.\label{fig:hypertree}}
\end{center}
\end{figure}

In the following, given graphs $F_1,\ldots,F_s$ on $k_1,\ldots,k_s$ vertices, we shall always label the union of their vertex sets as follows:
$$V = \bigsqcup_{r=1}^s V_{F_r} = \bigsqcup_{r=1}^s \{(r,1),(r,2),\ldots,(r,k_r)\},$$
and we shall also always use the letter $V$ to denote this disjoint union.
A set partition $\pi \in \pym(V)$ gives rise to a hypergraph $H_\pi$ on the set of vertices $\lle 1,s\rre$: to any part $\pi_i=\{(r_{i,1},a_{i,1}),\ldots,(r_{i,m_i},a_{i,m_i})\}$ of the set partition $\pi$ with size $m_i \geq 2$, we associate the hyperedge $e_i = \{r_{i,1},r_{i,2},\ldots,r_{i,m_i}\}$, and we set $E_{H_\pi} = \{e_1,e_2,\ldots,e_{\ell_{\geq 2}(\pi)}\}$, where $\ell_{\geq 2}(\pi)$ is the number of parts of $\pi$ with size larger than $2$. If we draw each part of the set partition $\pi$ as a hyperedge connecting the elements of this part and denote $G_\pi$ the resulting hypergraph, then $H_\pi$ is obtained from $G_\pi$ by taking the image hypergraph by the map $(r,a)\mapsto r$. 
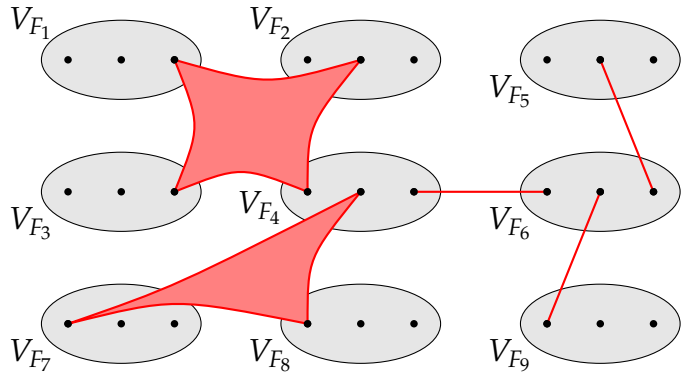
\begin{figure}[ht]
\begin{center}
\begin{tikzpicture}[scale=0.7]

\filldraw [fill=white!90!black] (2,0) ellipse (1.5cm and 0.75cm);
\foreach \x in {1,2,3}
\fill (\x,0) circle (2pt);
\draw (0.3, 0.7) node {$V_{F_1}$};

\begin{scope}[shift={(4.5,0)}]
\filldraw [fill=white!90!black] (2,0) ellipse (1.5cm and 0.75cm);
\foreach \x in {1,2,3}
\fill (\x,0) circle (2pt);
\draw (0.3, 0.7) node {$V_{F_2}$};
\end{scope}

\begin{scope}[shift={(0,-2.5)}]
\filldraw [fill=white!90!black] (2,0) ellipse (1.5cm and 0.75cm);
\foreach \x in {1,2,3}
\fill (\x,0) circle (2pt);
\draw (0.3, -0.6) node {$V_{F_3}$};
\end{scope}

\begin{scope}[shift={(4.5,-2.5)}]
\filldraw [fill=white!90!black] (2,0) ellipse (1.5cm and 0.75cm);
\foreach \x in {1,2,3}
\fill (\x,0) circle (2pt);
\draw (0.1, -0.3) node {$V_{F_4}$};
\end{scope}

\begin{scope}[shift={(9,0)}]
\filldraw [fill=white!90!black] (2,0) ellipse (1.5cm and 0.75cm);
\foreach \x in {1,2,3}
\fill (\x,0) circle (2pt);
\draw (0.3, -0.6) node {$V_{F_5}$};
\end{scope}

\begin{scope}[shift={(9,-2.5)}]
\filldraw [fill=white!90!black] (2,0) ellipse (1.5cm and 0.75cm);
\foreach \x in {1,2,3}
\fill (\x,0) circle (2pt);
\draw (0.3, -0.6) node {$V_{F_6}$};
\end{scope}

\begin{scope}[shift={(0,-5)}]
\filldraw [fill=white!90!black] (2,0) ellipse (1.5cm and 0.75cm);
\foreach \x in {1,2,3}
\fill (\x,0) circle (2pt);
\draw (0.3, -0.6) node {$V_{F_7}$};
\end{scope}

\begin{scope}[shift={(4.5,-5)}]
\filldraw [fill=white!90!black] (2,0) ellipse (1.5cm and 0.75cm);
\foreach \x in {1,2,3}
\fill (\x,0) circle (2pt);
\draw (0.3, -0.6) node {$V_{F_8}$};
\end{scope}

\begin{scope}[shift={(9,-5)}]
\filldraw [fill=white!90!black] (2,0) ellipse (1.5cm and 0.75cm);
\foreach \x in {1,2,3}
\fill (\x,0) circle (2pt);
\draw (0.3, -0.6) node {$V_{F_9}$};
\end{scope}

\filldraw [red,fill=red!50!white,thick] (3,0) .. controls (4.75,-0.5) .. (6.5,0) .. controls (5.5,-1.25) .. (5.5,-2.5) .. controls (4.25,-2) .. (3,-2.5) .. controls (3.5,-1.25) .. (3,0);

\filldraw [red,fill=red!50!white,thick] (6.5,-2.5) .. controls (5.5,-3.75) .. (5.5,-5) .. controls (3,-4.5) .. (1,-5) .. controls (3,-4.3) .. (6.5,-2.5);

\draw [red,thick] (7.5,-2.5) -- (10,-2.5);
\draw [red,thick] (11,0) -- (12,-2.5);
\draw [red,thick] (10,-5) -- (11,-2.5);

\foreach \z in {(3,0), (6.5,0), (5.5,-2.5) , (3,-2.5), (6.5,-2.5), (5.5,-5) , (1,-5), (7.5,-2.5) , (10,-2.5), (11,0) , (12,-2.5), (10,-5), (11,-2.5)}
\fill \z circle (2pt);
\end{tikzpicture}
\caption{A set partition $\pi$ of a set of $27=9*3$ vertices whose corresponding hypergraph $H_\pi$ is the hypertree from Figure \ref{fig:hypertree}; the parts of $\pi$ which are not singletons are drawn in red.\label{fig:set_partition_hypertree}}
\end{center}
\end{figure}
\medskip

\noindent For instance, with $s=9$ and $k_1=k_2=\cdots=k_s=3$, a set partition $\pi$ which gives the hypertree $H_\pi$ from Figure \ref{fig:hypertree} is drawn in Figure \ref{fig:set_partition_hypertree}. The hypergraphs $H_\pi$ are allowed to have multiple hyperedges and to have hyperedges with multiple occurrences of a vertex; however, this does not happen if $H_\pi$ is a hypertree. \medskip

We now consider the joint cumulant 
$\kappa(S_n(F_1),\ldots,S_n(F_s))$, which we expand by multilinearity:
\begin{equation}
\kappa(S_n(F_1),\ldots,S_n(F_s)) = \sum_{\substack{\phi_1 : \lle 1,k_1 \rre \to \lle 1,n\rre \\ \vdots \qquad\vdots \vspace*{1mm}\\ \phi_s : \lle 1,k_s\rre \to \lle 1,n\rre}} \kappa(A_{\phi_1}(F_1),\ldots,A_{\phi_s}(F_s)).\tag{M}\label{eq:multilinearity}
\end{equation}
A collection of maps $(\phi_r)_{1 \leq r \leq s}$ can be seen as a single map $\phi : V \to \lle 1,n\rre$, by setting $\phi(r,a) = \phi_r(a)$. Then, as in the proof of Proposition \ref{prop:moment}, we can associate to the map $\phi$ a set partition $\pi=\pi(\phi) \in \pym(V)$ whose parts correspond to elements with the same image by $\phi$.

\begin{lemma}\label{lem:connected_hypergraph}
With the notations introduced above, if $H_{\pi(\phi)}$ is not a connected hypergraph, then the elementary joint cumulant $\kappa(A_{\phi_1}(F_1),\ldots,A_{\phi_s}(F_s))$ vanishes.
\end{lemma}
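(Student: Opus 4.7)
The plan is to exploit the classical vanishing property of joint cumulants: if a family of bounded random variables $(X_1,\ldots,X_s)$ can be partitioned into two non-empty sub-families that are independent of each other, then $\kappa(X_1,\ldots,X_s)=0$. This is a standard consequence of the multiplicativity of the moment generating functional and Möbius inversion on set partitions (it can also be read directly from the definition of $\kappa$ recalled in the paper). So the whole task is to show that under the disconnectedness assumption, the $s$-tuple $(A_{\phi_1}(F_1),\ldots,A_{\phi_s}(F_s))$ splits into two independent groups.

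First, since $H_{\pi(\phi)}$ is not connected, I can choose a partition $\lle 1,s\rre = A \sqcup B$ with $A,B \neq \emptyset$ such that no hyperedge of $H_{\pi(\phi)}$ meets both $A$ and $B$. I translate this purely combinatorial statement into a disjointness property for the images of the $\phi_r$'s: let $I_A = \bigcup_{r \in A} \phi_r(\lle 1,k_r\rre)$ and $I_B = \bigcup_{r \in B} \phi_r(\lle 1,k_r\rre)$. I claim $I_A \cap I_B = \emptyset$. Indeed, if some $i \in \lle 1,n\rre$ belonged to both, then there would exist $r_1 \in A$, $r_2 \in B$ and $a\in V_{F_{r_1}}$, $b \in V_{F_{r_2}}$ with $\phi_{r_1}(a)=\phi_{r_2}(b)=i$; by definition of $\pi(\phi)$, the elements $(r_1,a)$ and $(r_2,b)$ would lie in the same block of $\pi(\phi)$, and this block, having size at least $2$, would contribute a hyperedge in $H_{\pi(\phi)}$ simultaneously touching $A$ and $B$, contradicting the choice of the partition.

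Next, I use the concrete construction of the $W$-random graph $G_n(\gamma)$ from the independent families $(X_i)_{1\leq i \leq n}$ and $(U_{ij})_{1 \leq i < j \leq n}$. For each $r$, the Bernoulli variable $A_{\phi_r}(F_r) = \prod_{\{a,b\} \in E_{F_r}} 1_{(U_{\phi_r(a)\phi_r(b)} \leq g(X_{\phi_r(a)},X_{\phi_r(b)}))}$ is measurable with respect to the $\sigma$-algebra $\mathcal{F}_r$ generated by the $X_i$ for $i \in \phi_r(\lle 1,k_r\rre)$ and by the $U_{ij}$ for $i,j \in \phi_r(\lle 1,k_r\rre)$. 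Hence $(A_{\phi_r}(F_r))_{r \in A}$ is measurable with respect to the $\sigma$-algebra $\mathcal{F}_A$ generated by the $X_i$ with $i \in I_A$ and the $U_{ij}$ with $i,j \in I_A$, and similarly for $\mathcal{F}_B$. Since $I_A \cap I_B = \emptyset$, no common variable $X_i$ or $U_{ij}$ appears in both $\mathcal{F}_A$ and $\mathcal{F}_B$; the mutual independence of the defining variables then forces $\mathcal{F}_A$ and $\mathcal{F}_B$ to be independent.

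Therefore the families $(A_{\phi_r}(F_r))_{r \in A}$ and $(A_{\phi_r}(F_r))_{r \in B}$ are independent, and the vanishing property for joint cumulants yields $\kappa(A_{\phi_1}(F_1),\ldots,A_{\phi_s}(F_s)) = 0$. The only delicate point in the argument is the passage from the purely combinatorial disconnection of $H_{\pi(\phi)}$ to the probabilistic independence of the two blocks of indicators, handled above via the disjointness of the index sets $I_A$ and $I_B$; everything else is standard.
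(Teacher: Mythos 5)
Your proof is correct and follows the same route as the paper: splitting the vertex set of the disconnected hypergraph into two components, deducing that the corresponding unions of images of the $\phi_r$'s are disjoint, concluding that the two blocks of indicator variables are measurable with respect to independent $\sigma$-fields generated by disjoint sets of $X_i$'s and $U_{ij}$'s, and invoking the standard vanishing of joint cumulants on independent blocks. No gaps.
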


\begin{proof}
Suppose that we can split $H_{\pi(\phi)}$ in two components which are not connected; up to permutation of the graphs $F_r$ (this does not change the value of the joint cumulant), we can assume that these components are $\lle 1,t\rre$ and $\lle t+1,s\rre$. The unions of images
$$P = \bigcup_{r=1}^t \phi_r(\lle 1,k_r\rre) \quad \text{and}\quad Q = \bigcup_{r=t+1}^s \phi_r(\lle 1,k_r\rre)$$
are then disjoint, because otherwise we would have a (hyper)edge between some $r \in \lle 1,t\rre$ and some $r' \in \lle t+1,s\rre$. However, the variables $A_{\phi_1}(F_1),\ldots,A_{\phi_t}(F_t)$ are measurable with respect to the $\sigma$-field spanned by the random variables $X_i$ and $U_{ij}$ with indices $i$ and $j$ in $P$; whereas the variables $A_{\phi_{t+1}}(F_{t+1}),\ldots,A_{\phi_s}(F_s)$ are measurable with respect to the $\sigma$-field spanned by the random variables $X_i$ and $U_{ij}$ with indices $i$ and $j$ in $Q$. Therefore, these two sets of variables are independent:
$$(A_{\phi_1}(F_1),\ldots,A_{\phi_t}(F_t)) \text{ and } (A_{\phi_{t+1}}(F_{t+1}),\ldots,A_{\phi_s}(F_s)) \text{ are independent}.$$
It is then a well-known property of the joint cumulants that they vanish when evaluated on random variables that can be split in independent blocks; see \cite{LS59}.
\end{proof}
\medskip

In the expansion by multilinearity \eqref{eq:multilinearity}, we can now gather the maps $\phi : V \to \lle 1,n\rre$ according to their hypergraphs $H_{\pi(\phi)}$. By the previous lemma, if a hypergraph $H$ yields a non-zero contribution, then it is connected. Moreover, if $H$ is a fixed connected hypergraph, then the number of choices for a map $\phi$ with $H_{\pi(\phi)}=H=(V,E)$ is a $O(n^{k_1+\cdots+k_s - \sum_{e \in E} \deg e})$. Let us be a bit more precise on this enumeration. In order to reconstruct a map $\phi$, we first have to choose for each hyperedge $e_i = \{r_{i,1},r_{i,2},\ldots,r_{i,m_i}\}$ a collection of elements $\{a_{i,1},\ldots,a_{i,m_i}\}$ with each $a_{i,j} \in [\![]] 1,k_{r_{i,j}}]\!]$. We ask that $a_{i,j} \neq a_{i',j'}$ if $r_{i,j}=r_{i',j'}$ but $(i,j) \neq (i',j')$. We have then reconstructed a partial set partition $\pi$ of the set $V$, whose parts are the $\{(r_{i,1},a_{i,1}),\ldots,(r_{i,m_i},a_{i,m_i})\}$. We complete $\pi$ in a full set partition of $V$ by adding singletons. Then, to reconstruct $\phi$ such that $\pi=\pi(\phi)$ and $H= H_{\pi(\phi)}$, we have $n^{\downarrow \ell(\pi)}$ possibilities. The length of the set partition $\pi$ is $k_1+k_2+\cdots+k_s-\sum_{e \in E}\deg e$, and on the other hand, the number of choices for $\pi$ is finite and independent from $n$: it only depends on the hypergraph $H$. This explains the estimate claimed above. Now, if we want to maximise the power of $n$, then we need $H$ to be connected but with $\sum_{e \in E} \deg e$ as small as possible; so, $H$ has to be a hypertree. We have therefore proved:

\begin{lemma}
For any finite graphs $F_1,\ldots,F_s$ on $k_1,\ldots,k_s \geq 1$ vertices,
$$\kappa(S_n(F_1),\ldots,S_n(F_s)) =  \sum_{\substack{ \phi : V \to \lle 1,n\rre \text{ such that} \\ H_{\pi(\phi)} \text{ is a hypertree}}} \kappa(A_{\phi_1}(F_1),\ldots,A_{\phi_s}(F_s)) + O(n^{k_1+\cdots+k_s-s}).$$
\end{lemma}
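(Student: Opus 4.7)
The plan is to start from the multilinearity expansion \eqref{eq:multilinearity} and then partition the maps $\phi : V \to \lle 1,n\rre$ according to the hypergraph $H_{\pi(\phi)}$ they induce on $\lle 1,s\rre$. By Lemma \ref{lem:connected_hypergraph}, only those $\phi$ for which $H_{\pi(\phi)}$ is connected contribute a nonzero elementary cumulant, so I can restrict the sum to these. The task is then to separate the hypertree contributions from the rest and bound the latter by $O(n^{k_1+\cdots+k_s-s})$.

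For a fixed connected hypergraph $H=(\lle 1,s\rre,E)$, I count the number of $\phi$ such that $H_{\pi(\phi)}=H$ as follows: first choose, for each hyperedge $e_i=\{r_{i,1},\ldots,r_{i,m_i}\}$, a tuple $(a_{i,1},\ldots,a_{i,m_i})$ with $a_{i,j}\in \lle 1,k_{r_{i,j}}\rre$ indicating which vertex of each $F_{r_{i,j}}$ is merged. The number of such choices depends only on the $k_r$'s and on $H$, not on $n$, and is bounded by $\prod_{i}\prod_{j}k_{r_{i,j}}$. This datum determines the non-singleton parts of the set partition $\pi$; completing it into a set partition of $V$ with only singletons added, its length is $\ell(\pi)=k_1+\cdots+k_s-\sum_{e\in E}\deg e$. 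Finally, a map $\phi$ with $\pi(\phi)=\pi$ corresponds to an injection from the $\ell(\pi)$ parts to $\lle 1,n\rre$, and there are $n^{\downarrow\ell(\pi)}$ such injections. Since the elementary cumulants $\kappa(A_{\phi_1}(F_1),\ldots,A_{\phi_s}(F_s))$ are uniformly bounded (the $A_{\phi_r}(F_r)$ take values in $\{0,1\}$), the total contribution from hypergraphs with a fixed shape $H$ is $O(n^{k_1+\cdots+k_s-\sum_{e\in E}\deg e})$.

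For a connected hypergraph on $s$ vertices we have $\sum_{e\in E}\deg e \geq s-1$, with equality exactly when $H$ is a hypertree. Separating the $\phi$'s with $H_{\pi(\phi)}$ a hypertree from those with $\sum_{e\in E}\deg e \geq s$, and noting that the number of isomorphism types of connected hypergraphs on $\lle 1,s\rre$ with $\sum_{e\in E}\deg e$ bounded is finite (it suffices to bound it in terms of $s$ and $\max_r k_r$, both fixed), the non-hypertree contribution is absorbed in a remainder of order $O(n^{k_1+\cdots+k_s-s})$. What remains is precisely the sum over $\phi$ such that $H_{\pi(\phi)}$ is a hypertree, which yields the claimed identity.

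The only delicate point is checking that the number of hyperedge configurations $(a_{i,j})$ compatible with a given hypertree skeleton $H$ is uniformly bounded independently of $n$; this follows because each such configuration is determined by at most $\sum_i m_i = \sum_{e\in E}(\deg e + 1) \leq 2(s-1)$ choices in the finite sets $\lle 1,k_r\rre$. Everything else is bookkeeping and a direct consequence of the falling-factorial count $n^{\downarrow \ell(\pi)}=n^{\ell(\pi)}+O(n^{\ell(\pi)-1})$, which also ensures that the lower-order terms from the hypertree $\phi$'s (coming from the difference between $n^{\downarrow \ell(\pi)}$ and $n^{\ell(\pi)}$) fit into the $O(n^{k_1+\cdots+k_s-s})$ remainder.
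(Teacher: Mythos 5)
Your proposal is correct and follows essentially the same route as the paper: expand by multilinearity, discard disconnected hypergraphs via Lemma \ref{lem:connected_hypergraph}, count the maps $\phi$ with a fixed connected hypergraph skeleton as (finitely many hyperedge labellings, independent of $n$) times $n^{\downarrow \ell(\pi)}$ with $\ell(\pi)=k_1+\cdots+k_s-\sum_{e\in E}\deg e$, and use $\sum_{e\in E}\deg e\geq s-1$ with equality exactly for hypertrees to absorb everything else into the $O(n^{k_1+\cdots+k_s-s})$ remainder. The closing remark about replacing $n^{\downarrow\ell(\pi)}$ by $n^{\ell(\pi)}$ is not needed for this statement (the hypertree sum is kept exactly), but it is harmless.
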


Now that we have isolated the leading term, starting from a hypertree $H$ on $\lle 1,s\rre$, the reconstruction of the corresponding set partitions $\pi$ and maps $\phi$ is easier to describe. The set partitions $\pi$ such that $H_\pi=H$ are provided by the following procedure:
\begin{enumerate}[label=(SP\arabic*)]
    \item\label{enum:reconstruction_1} For any vertex $r \in \lle 1,s\rre$ connected to hyperedges $e_{i_1},e_{i_2},\ldots,e_{i_c}$ of $H$, choose \emph{distinct} elements $a_{r,i_1}\neq a_{r,i_2} \neq \cdots \neq a_{r,i_c} \in \lle 1,k_r\rre$. 
    \item\label{enum:reconstruction_2} The set partition $\pi$ of $V$ consists then in the parts $\{(r,a_{r,i}),\,\,r \in e_i\}$ for $e_i \in E_{H}$, plus singletons.
\end{enumerate}
On the other hand, given $\pi \in \pym(V)$ a set partition such that $H_\pi$ is a hypertree, we claim that the $n^{\downarrow k_1+\cdots + k_s-(s-1)}$ maps $\phi=(\phi_1,\ldots,\phi_s)$ such that $\pi=\pi(\phi)$ all yield the same elementary joint cumulant
$$\kappa_\pi(F_1,\ldots,F_s)(\gamma)=\kappa(A_{\phi_1}(F_1),\ldots,A_{\phi_s}(F_s)),$$
and that this value is the evaluation of an observable in $\obs$ on the graphon $\gamma$. To prove this claim, note first that if $H_{\pi(\phi)}$ is a hypertree, then since it does not have loops, the maps $\phi_1,\ldots,\phi_r$ are all injective. Then,
$$
 \kappa(A_{\phi_1}(F_1),\ldots,A_{\phi_s}(F_s)) = \sum_{\theta \in \pym(s)} \mu(\theta)\,\prod_{j=1}^{\ell(\theta)} \esper\!\left[\prod_{r \in \theta_j}A_{\phi_r}(F_r)\right].
$$
Given a set partition $\theta \in \pym(s)$, we lift it to a set partition $\Theta \in \pym(V)$ with the same length, by taking the inverse images of the parts of $\theta$ by the surjective map $S : (r,a) \in V \mapsto r \in \lle 1,s\rre$. We claim that each joint moment $\esper[\prod_{r \in \theta_j}A_{\phi_r}(F_r)]$ is an observable $t(F,\gamma)$ for some graph $F$ which depends only on $\pi$ and on the graphs $F_r$, $r \in \theta_j$. In order to construct this graph $F$, we start from the disjoint union $\bigsqcup_{r \in \theta_j} F_r$, and for each part $\pi_i$ of the set partition $\pi$, we look at the intersection $\Theta_j \cap \pi_i$. If this intersection contains more than one element, say $(r_1,a_1),\ldots,(r_c,a_c)$, then we replace the disjoint graphs $F_{r_1},\ldots,F_{r_c}$ by their join
$$(F_{r_1}\join F_{r_2}\join \cdots \join F_{r_c})(a_1,a_2,\ldots,a_c),$$
in which the vertices $(r_1,a_1),\ldots,(r_c,a_c)$ have been identified. Notice that we may have to do several joins involving the same graph $F_r$, for instance if two parts $\pi_i$ and $\pi_{i'}$ contain elements $a_{r,i}$ and $a_{r,i'}$ of $V_{F_r}$ (by \ref{enum:reconstruction_1}, these elements are then distinct). The graph that we obtain at the end is 
$$F = \left(\bigsqcup_{r \in \theta_j} F_r\right) \downarrow (\Theta_j \wedge \pi),$$
where $\Theta_j \wedge \pi$ is the set partition of $\Theta_j$ induced by the parts of $\pi$. The same arguments as in the proof of Proposition \ref{prop:moment} show then that:
$$\kappa(A_{\phi_1}(F_1),\ldots,A_{\phi_s}(F_s)) = \sum_{\theta \in \pym(s)} \mu(\theta)\,\prod_{j=1}^{\ell(\theta)} \,t\!\left(\left(\bigsqcup_{r \in \theta_j} F_r\right) \downarrow (\Theta_j \wedge \pi),\gamma\right), $$
and the right-hand side is indeed the evaluation on $\gamma$ of an observable $\kappa_\pi(F_1,\ldots,F_s)$ which depends only on $\pi$ and the graphs $F_1,\ldots,F_s$. So, to summarise:

\begin{proposition}\label{prop:expansion_cumulant}
Let $F_1,\ldots,F_s$ be finite graphs with sizes $k_1,\ldots,k_{s\geq 2}$. We have
$$\frac{\kappa(S_n(F_1),S_n(F_2),\ldots,S_n(F_s))}{n^{k_1+\cdots+k_s-s+1}} = \kappa_s(F_1,F_2,\ldots,F_s)(\gamma) + O\!\left(\frac{1}{n}\right)$$
where
\begin{align*}
\kappa_s(F_1,F_2,\ldots,F_s) &=  \sum_{H \in \Hset(s)} \left(\sum_{\pi \in \pym(V)\,|\,H_\pi=H} \kappa_\pi(F_1,F_2,\ldots,F_s)\right);\\
\kappa_\pi(F_1,F_2,\ldots,F_s) &= \sum_{\theta \in \pym(s)} \mu(\theta)\,\prod_{j=1}^{\ell(\theta)} \,t\!\left(\left(\bigsqcup_{r \in \theta_j} F_r\right) \downarrow (\Theta_j \wedge \pi),\gamma\right),
\end{align*}
In the formula for $\kappa_\pi(F_1,F_2,\ldots,F_s)$, given a set partition $\theta \in \pym(s)$, $\Theta$ denotes the induced set partition of the set of all vertices $V=\bigsqcup_{r=1}^s V_{F_r}$.
The set partitions $\pi$ such that $H_\pi=H$ is a fixed hypertree can all be obtained thanks to the reconstruction procedure \ref{enum:reconstruction_1} + \ref{enum:reconstruction_2}.
\end{proposition}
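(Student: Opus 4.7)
The plan is to organize the preceding observations into three steps: first, a multilinearity-plus-vanishing argument isolating the leading power of $n$; second, an evaluation of the elementary joint cumulant attached to a set partition $\pi$ with $H_\pi$ a hypertree; third, a counting step that assembles the dominant contributions.

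For the first step, I would start from the multilinearity expansion \eqref{eq:multilinearity} and apply Lemma \ref{lem:connected_hypergraph}, keeping only those $\phi$ whose hypergraph $H_{\pi(\phi)}$ is connected. Gathering maps by hypergraph $H=(\lle 1,s\rre,E)$ and by set partition $\pi\in\pym(V)$ with $H_\pi=H$, the number of $\phi$ compatible with a fixed $\pi$ equals the number of injective maps $\lle 1,\ell(\pi)\rre\to\lle 1,n\rre$, that is, $n^{\downarrow\ell(\pi)}$. Since $\ell(\pi)=k_1+\cdots+k_s-\sum_{e\in E}\deg e$, and since $\sum_{e\in E}\deg e\geq s-1$ for any connected hypergraph on $s$ vertices with equality precisely for hypertrees, every connected hypergraph $H$ which is not a hypertree contributes at most $O(n^{k_1+\cdots+k_s-s})$ in total, hence $O(n^{-1})$ after dividing by the normalising factor $n^{k_1+\cdots+k_s-s+1}$. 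It remains to handle the case when $H$ is a hypertree.

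For the second step, I fix a hypertree $H\in\Hset(s)$ and a set partition $\pi$ with $H_\pi=H$. Hypertrees have no hyperloops, so no part of $\pi$ contains two elements of the same $V_{F_r}$, and every component $\phi_r$ of a compatible $\phi$ is injective. Expanding the joint cumulant by the Möbius inversion formula and lifting each $\theta\in\pym(s)$ to the partition $\Theta\in\pym(V)$ along the surjection $(r,a)\mapsto r$, I would repeat the argument of Proposition \ref{prop:moment} block by block: for each $\theta_j$, the product $\prod_{r\in\theta_j}A_{\phi_r}(F_r)$ is the indicator that the amalgamated graph obtained from $\bigsqcup_{r\in\theta_j}F_r$ by merging vertices lying in the same block of $\Theta_j\wedge\pi$ is a subgraph of $G_n(\gamma)$. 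By the coherence of the $W$-random graph model and the absence of hyperloops in $H$ (which prevents the creation of graph loops in the contraction), the corresponding expectation equals
$$t\!\left(\left(\bigsqcup_{r\in\theta_j}F_r\right)\downarrow(\Theta_j\wedge\pi),\,\gamma\right).$$
Summing over $\theta$ with the Möbius weights $\mu(\theta)$ shows that the elementary joint cumulant depends only on $\pi$ and equals $\kappa_\pi(F_1,\ldots,F_s)(\gamma)$, as claimed.

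For the third step, when $H_\pi$ is a hypertree we have $\ell(\pi)=k_1+\cdots+k_s-(s-1)$, so $n^{\downarrow\ell(\pi)}=n^{k_1+\cdots+k_s-s+1}(1+O(n^{-1}))$, and after the normalisation each such $\pi$ contributes $\kappa_\pi(F_1,\ldots,F_s)(\gamma)+O(n^{-1})$. Summing first over the (finitely many) $\pi$ compatible with a fixed $H$ via the reconstruction procedure \ref{enum:reconstruction_1}+\ref{enum:reconstruction_2}, and then over $H\in\Hset(s)$, yields the announced expression for $\kappa_s(F_1,\ldots,F_s)(\gamma)$. The main obstacle is the bookkeeping carried out in the second step: one must check that the index structure $\Theta_j\wedge\pi$ really does identify the elementary graph-morphism events coherently across all the blocks of $\theta$, and that the resulting observable is independent of the specific $\phi$ chosen above $\pi$ — the hypertree property, by forbidding hyperloops, is precisely what makes this reduction legitimate.
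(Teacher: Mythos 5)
Your proposal is correct and follows essentially the same route as the paper: multilinearity plus Lemma \ref{lem:connected_hypergraph}, the degree count $\ell(\pi)=k_1+\cdots+k_s-\sum_{e\in E}\deg e$ showing that only hypertrees reach the leading order, and the Möbius expansion with the lift $\theta\mapsto\Theta$ identifying each elementary cumulant above a hypertree partition $\pi$ with the observable $\kappa_\pi(F_1,\ldots,F_s)$ evaluated at $\gamma$. Your remark that the absence of hyperloops both forces the $\phi_r$ to be injective and prevents loops in the contracted graphs $(\bigsqcup_{r\in\theta_j}F_r)\downarrow(\Theta_j\wedge\pi)$ is exactly the point the paper makes to justify the reduction.
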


\begin{example}
Suppose $s=2$. There is a unique hypertree on two vertices:
\vspace{3mm}

\begin{center}
\begin{tikzpicture}[scale=1]
\foreach \x in {(0,0),(2,0)}
\fill \x circle (2pt);
\draw [thick] (0,0) -- (2,0);
\draw (0,-0.3) node {$1$};
\draw (2,-0.3) node {$2$};
\end{tikzpicture}
\end{center}    
A set partition $\pi$ corresponding to this hypertree $H$ is obtained by choosing two indices $i_1 \in \lle 1,k_1\rre$ and $i_2 \in \lle 1,k_2\rre$. Then,
$$\kappa_\pi(F_1,F_2) = (F_1\join F_2)(i_1,i_2) - F_1 \, F_2.$$
We thus recover the formula for the asymptotics of the covariance.
\end{example}

\begin{example} Suppose $s=3$. There are four hypertrees on three vertices:
\vspace{3mm}

\begin{center}
\begin{tikzpicture}[scale=1]
\fill [white!70!black] (0,0) -- (2,0) -- (1,-1.3);
\foreach \x in {(0,0),(2,0),(1,-1.3)}
\fill \x circle (2pt);
\draw [thick] (0,0) -- (2,0) -- (1,-1.3) -- (0,0);
\draw (0,0.3) node {$1$};
\draw (1,-1.6) node {$3$};
\draw (2,0.3) node {$2$};
\draw (1,-2.2) node {$h_0$};
\begin{scope}[shift={(8,0)}]
\foreach \x in {(0,0),(2,0),(1,-1.3)}
\fill \x circle (2pt);
\draw [thick] (0,0) -- (2,0) -- (1,-1.3);
\draw (0,0.3) node {$1$};
\draw (1,-1.6) node {$3$};
\draw (2,0.3) node {$2$};
\draw (1,-2.2) node {$h_2$};
\end{scope}
\begin{scope}[shift={(12,0)}]
\foreach \x in {(0,0),(2,0),(1,-1.3)}
\fill \x circle (2pt);
\draw [thick] (2,0) -- (1,-1.3) -- (0,0);
\draw (0,0.3) node {$1$};
\draw (1,-1.6) node {$3$};
\draw (2,0.3) node {$2$};
\draw (1,-2.2) node {$h_3$};
\end{scope}
\begin{scope}[shift={(4,0)}]
\foreach \x in {(0,0),(2,0),(1,-1.3)}
\fill \x circle (2pt);
\draw [thick] (1,-1.3) -- (0,0) -- (2,0);
\draw (0,0.3) node {$1$};
\draw (1,-1.6) node {$3$};
\draw (2,0.3) node {$2$};
\draw (1,-2.2) node {$h_1$};
\end{scope}
\end{tikzpicture}
\end{center}    

\noindent For the first hypertree $h_0$, a corresponding set partition is obtained by choosing three indices $i_1 \in \lle 1,k_1\rre$, $i_2 \in \lle 1,k_2\rre$, $i_3 \in \lle 1,k_3\rre$. The corresponding term in $\kappa_3(F_1,F_2,F_3)$ is 
\begin{align*}
\kappa_\pi(F_1,F_2,F_3) &= (F_1\join F_2 \join F_3)(i_1,i_2,i_3) - (F_1\join F_2)(i_1,i_2) \, F_3 \\
&\quad- (F_2\join F_3)(i_2,i_3) \, F_1- (F_3\join F_1)(i_3,i_1) \, F_2 + 2\,F_1\, F_2 \, F_3.
\end{align*}
For the second hypertree $h_1$, 
a corresponding set partition is obtained by choosing four indices $i_1\neq j_1 \in \lle 1,k_1\rre$, $i_2 \in \lle 1,k_2\rre$, $i_3 \in \lle 1,k_3\rre$. The corresponding term is 
\begin{align*}
\kappa_\pi(F_1,F_2,F_3) &= (F_1\join F_2 \join F_3)(i_1,i_2;j_1,i_3)- (F_1\join F_2)(i_1,i_2) \, F_3  \\
&\quad - (F_1\join F_3)(j_1,i_3) \, F_1 + F_1\, F_2 \, F_3,
\end{align*}
where in the first term we identify the vertex $i_1$ of $F_1$ with the vertex $i_2$ of $F_2$, and the vertex $j_1$ of $F_1$ with the vertex $i_3$ of $F_3$.
The two remaining hypertrees give similar expressions, with the roles of $F_1$, $F_2$ and $F_3$ permuted cyclically; see the formula at the end of the proof of \cite{FMN20}*{Proposition 20}. These formulas were used in the introduction in order to compute $\kappa_3(F_1,F_1,F_1)$ when $F_1=\begin{tikzpicture}[baseline=-1mm,scale=0.5]
\foreach \x in {(0,0),(1,0),(0.5,0.5),(0.5,-0.5)}
\fill \x circle (3pt);
\draw (0,0) -- (0.5,0.5) -- (1,0) -- (0.5,-0.5) -- (0,0);
\draw (0,0) -- (1,0);
\end{tikzpicture}$.
\end{example}
\medskip

\subsection{Hidden equations satisfied by the singular graphons}
The maps $\kappa_s $ and $\kappa_\pi: \Gset^s \to \obs$ have a complicated combinatorial description, but they enable us to get more information on the singular graphons:

\begin{theorem}\label{thm:additional_equations}
Let $\gamma$ be a singular graphon. For any $s \geq 2$ and any finite graphs $F_1,\ldots,F_s$,
$$\kappa_s(F_1,\ldots,F_s)(\gamma)=0,$$
where $\kappa_s(F_1,\ldots,F_s)$ is the observable in $\obs$ defined by Proposition \ref{prop:expansion_cumulant}.
\end{theorem}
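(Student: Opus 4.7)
My approach is to reduce the vanishing of $\kappa_s(F_1, \ldots, F_s)(\gamma)$ to an improved bound on the joint cumulants of the subgraph counts $S_n(F_r)$. By Proposition \ref{prop:expansion_cumulant}, $\kappa_s(F_1, \ldots, F_s)(\gamma)$ is the leading coefficient (at order $n^{k_1 + \cdots + k_s - s + 1}$) of $\kappa(S_n(F_1), \ldots, S_n(F_s))$, viewed as a polynomial in $n$. Hence the theorem is equivalent to the improved crude bound
\[
|\kappa(S_n(F_1), \ldots, S_n(F_s))| = O\bigl(n^{k_1 + \cdots + k_s - s}\bigr),
\]
which gains one power of $n$ over the generic estimate \eqref{eq:bound_cumulant}.

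The plan to establish this bound unfolds in four steps. First, I expand the cumulant using the multilinearity identity \eqref{eq:multilinearity} and, via Lemma \ref{lem:connected_hypergraph}, restrict attention to tuples $(\phi_1, \ldots, \phi_s)$ whose associated hypergraph $H_{\pi(\phi)}$ is a spanning hypertree of $\lle 1, s \rre$. Second, I use that for such a hypertree-compatible tuple, any two maps $\phi_r, \phi_{r'}$ share at most one $X$-variable, hence no common $U_{ij}$; consequently the Bernoullis $A_{\phi_r}(F_r)$ become conditionally independent given all the $X_i$'s. This rewrites the elementary joint cumulant $\kappa(A_{\phi_1}(F_1), \ldots, A_{\phi_s}(F_s))$ as a joint cumulant $\kappa(\tilde Y_1, \ldots, \tilde Y_s)$ of marginal densities of $g$, where $\tilde Y_r$ depends only on the shared uniform variables $x_e$ attached to hyperedges $e \ni r$. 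Third, summing over the set partitions $\pi$ with fixed hypertree structure $H$ --- which, since the distinguished vertices $(a_{r,e})_{e \ni r}$ of each $F_r$ can be chosen independently across different $r$'s, decouples by multilinearity of joint cumulants --- yields $\kappa(Y_1^H, \ldots, Y_s^H)$, where each $Y_r^H$ is a symmetrized marginal of $g^{F_r}$. Finally, I sum over all hypertree topologies $H$ and show that, under the singular condition $\kappa_2(F, F')(\gamma) = 0$, the resulting combination of joint cumulants vanishes at leading order in $n$.

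The main obstacle is precisely this last step. Indeed, an individual cumulant $\kappa(Y_1^H, \ldots, Y_s^H)$ does not in general vanish for singular $\gamma$: for instance, when $H$ consists of a single hyperedge of degree $s - 1$, the $Y_r^H$'s are all functions of the same uniform variable, and their pairwise covariances vanish under singularity but higher joint cumulants need not. The cancellation must therefore occur globally through the sum over hypertree topologies, matching the Möbius inversion from the definition of $\kappa_\pi$. A useful reduction is the observation that any hypertree on $s \geq 2$ vertices has at least one vertex of degree $1$: if every vertex had degree $\geq 2$, then $2s \leq \sum_v \deg(v) = \sum_e |e| = (s - 1) + |E|$, so $|E| \geq s + 1$; but $|E| \leq \sum_e \deg(e) = s - 1$, a contradiction. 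This permits an inductive leaf-peeling argument on the hypertree, reducing the problem to hypertrees on $s-1$ vertices, with the vanishing of $\kappa_2(F_\ell, \cdot)(\gamma)$ at the leaf $\ell$ responsible for the gain of one power of $n$; the bookkeeping for hyperedges of degree $\geq 2$ (several $\tilde Y_r$'s depending jointly on a single $x_e$) is the delicate point that closes the argument.
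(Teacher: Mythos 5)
Your reduction of the theorem to the improved bound $|\kappa(S_n(F_1),\ldots,S_n(F_s))|=O(n^{k_1+\cdots+k_s-s})$ is legitimate (the cumulant is a polynomial in $n$ whose coefficient of $n^{k_1+\cdots+k_s-s+1}$ is $\kappa_s(F_1,\ldots,F_s)(\gamma)$ by Proposition \ref{prop:expansion_cumulant}), and your steps 1--3 essentially re-derive that proposition. But your step 4 \emph{is} the theorem, and you do not prove it. You correctly observe that for a single hyperedge of degree $s-1$ the higher joint cumulants of the marked densities need not vanish individually, so the cancellation must be global across hypertree topologies; you then invoke a ``leaf-peeling'' induction whose key difficulty (hyperedges of degree $\geq 2$, and more importantly the constraints $a_{r,i_1}\neq a_{r,i_2}$ from the reconstruction procedure \ref{enum:reconstruction_1}, which prevent the sums over marked points from factorising into products of the random variables $U_F(X)=\sum_i t(F^{\bullet i},g,X)$) you explicitly leave unresolved. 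No mechanism for the cancellation is exhibited, so the proposal has a genuine gap at its only nontrivial step.

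The paper's proof avoids this combinatorics entirely and is worth contrasting with your plan. It proceeds by induction on $s$: the case $s=2$ is the definition of singularity, and it implies via Bienaym\'e--Chebyshev that $X_n(F)=n^{-(k-1+1/s)}(S_n(F)-\esper[S_n(F)])\to_\proba 0$ for the \emph{intermediate} normalisation $\eps=1/s$. The induction hypothesis controls the joint cumulants of order $2\leq r\leq s-1$ (they are $O(n^{-2/s})$ after this rescaling), while the generic bound \eqref{eq:bound_cumulant} controls all orders $r\geq s$ by $C^r\,r!\,n^{1-r/s}$. Together these make the log-Laplace transform $L_n(z_1,\ldots,z_s)$ converge uniformly on a polydisk to a limit whose only surviving terms are the degree-$s$ coefficients $\kappa_s(F_1^{\otimes r_1},\ldots,F_s^{\otimes r_s})(\gamma)$. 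Since the $X_n(F_i)$ also converge in probability to $0$ and converge in joint moments, the limit must be the zero vector, forcing $L\equiv 0$ and hence $\kappa_s(F_1,\ldots,F_s)(\gamma)=0$. If you want to salvage a direct combinatorial route, a natural starting point is the identity $\var(U_F(X))=\kappa_2(F,F)(\gamma)=0$, which shows each $U_F$ is a.e.\ constant and does kill several hypertree contributions term by term; but the inequality constraints among marked points when a vertex of the hypertree has degree $\geq 2$ obstruct a clean factorisation, and this is exactly where your sketch stops.
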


\begin{proof}
The case $s=2$ is the definition of singular graphons. This implies that the variance of a random variable $S_n(F)$ is a $O(n^{2k-2})$ if $F$ has $k$ vertices. In particular, by the Bienaymé--Chebyshev inequality, for any $\eps>0$, 
$$X_n(F)=\frac{S_n(F)-\esper[S_n(F)]}{n^{k-1+\eps}}$$
converges in probability to $0$. Let us then prove the result of the proposition by induction on $s\geq 2$.
Suppose the result true up to order $s-1\geq 2$, and let us fix some finite graphs $F_1,\ldots,F_s$. We consider the joint generating series
\begin{align*}
E_n(z_1,\ldots,z_s)&=\esper\!\left[\exp\!\left(\frac{z_1(S_n(F_1)-\esper[S_n(F_1)])+\cdots+z_s(S_n(F_s)-\esper[S_n(F_s)])}{n^{k_1+\cdots+k_s-(s-1)}}\right)\right]\\
&=\esper[\exp(z_1X_n(F_1)+\cdots + z_sX_n(F_s))],
\end{align*}
the parameter $\eps$ being here chosen equal to $\frac{1}{s}$. The function above is an entire function on $\C^s$, since the $S_n(F)$'s are bounded random variables. The logarithm of this generating series involves the joint cumulants:
\begin{align*}
L_n(z_1,\ldots,z_s) &= \log E_n(z_1,\ldots,z_s) \\
&=\sum_{r=2}^\infty \sum_{\substack{r_1,r_2,\ldots,r_s \geq 0 \\ r_1+r_2+\cdots+r_s=r}} \frac{(z_1)^{r_1}(z_2)^{r_2}\cdots (z_s)^{r_s}}{r_1!\,r_2!\cdots r_s!}\,\,\kappa\!\left((X_n(F_1))^{\otimes r_1},\ldots,(X_n(F_s))^{\otimes r_s}\right).
\end{align*}
Here by $(X_n(F_i))^{\otimes r_i}$ we mean that the variable $X_n(F_i)$ appears $r_i$ times in the joint cumulant. The sum starts at $r=2$ because the random variables $X_n(F_i)$ are centered, and it is well-defined on a polydisk of $\C^s$ which we are now going to describe. For $2\leq r\leq s-1$, by the induction hypothesis,
\begin{align*}
\kappa\!\left((X_n(F_1))^{\otimes r_1},\ldots,(X_n(F_s))^{\otimes r_s}\right) &= \frac{\kappa((S_n(F_1))^{\otimes r_1},\ldots,(S_n(F_s))^{\otimes r_s})}{n^{\frac{r}{s} + r_1(k_1-1)+\cdots + r_s(k_s-1)}} = O\!\left(n^{-\frac{r}{s}}\right)=O\!\left(n^{-\frac{2}{s}}\right),
\end{align*}
with a constant in the final $O(\cdot)$ that depends only on the graphs $F_1,\ldots,F_s$; we can take the same constant for any sequence $(r_1,\ldots,r_s)$ with sum smaller than $s-1$, since there is a finite set of such sequences. On the other hand, if $r \geq s$, then the upper bound \eqref{eq:bound_cumulant} ensures that
 $$\left|\kappa\!\left((X_n(F_1))^{\otimes r_1},\ldots,(X_n(F_s))^{\otimes r_s}\right)\right|\leq C^r\,r!\,n^{1 - \frac{r}{s}}$$
 for some constant $C$ with depends only on $k_1,\ldots,k_s$. Therefore, the logarithm $L_n(z_1,\ldots,z_s)$ is convergent on the polydisk $\Delta = \{(z_1,\ldots,z_s) \in \C^s\,|\,\forall i \in \lle 1,s\rre,\,\,|z_i|<\frac{1}{2Cs} \}$.
 Indeed, if we remove from the series the finite number of terms corresponding to sequences $(r_1,\ldots,r_s)$ whose sum $r$ is smaller than $s-1$, then what remains is a series which is absolutely bounded by
 \begin{align*}
 &\sum_{r=s}^\infty \sum_{r_1+\cdots + r_s =r} \frac{r!}{r_1!\cdots r_s!}\,(C|z_1|)^{r_1} \cdots (C|z_s|)^{r_s}\\
  &= \sum_{r=s}^\infty (C(|z_1|+\cdots+|z_s|))^r = \frac{(C(|z_1|+\cdots+|z_s|))^s}{1-C(|z_1|+\cdots+|z_s|)} \leq \frac{1}{2^{s+1}}.
 \end{align*}
 Proposition \ref{prop:expansion_cumulant} and the upper bounds on cumulants that we have written above also prove that the sequence of series $(L_n(z_1,\ldots,z_s))_{n \in \N}$ converges uniformly on the polydisk $\Delta$ towards the function
 $$L(z_1,\ldots,z_s)=\sum_{\substack{r_1,r_2,\cdots,r_s \geq 0 \\ r_1+r_2+\cdots+r_s=s}} \frac{(z_1)^{r_1}\cdots (z_s)^{r_s}}{r_1!\cdots r_s!}\,\kappa_s(F_1^{\otimes r_1},\ldots,F_s^{\otimes r_s})(\gamma).$$
 The same limiting result holds for $E_n(z_1,\ldots,z_s)$ and the exponential $E(z_1,\ldots,z_s)$ of the series $L(z_1,\ldots,z_s)$ written above. Hence, the random variables $X_n(F_1),\ldots,X_n(F_s)$ converge in joint moments. However, we also know that they converge in probability to $0$, so we conclude that $E(z_1,\ldots,z_s)=1$ and that $L(z_1,\ldots,z_s)=0$. In particular, the term corresponding to the sequence $r_1=r_2=\cdots=r_s=1$ in $L(z_1,\ldots,z_s)$ yields $\kappa_s(F_1,\ldots,F_s)(\gamma)=0$, whence the result for the order $s$.
\end{proof}
\medskip

Let us insist on the fact that without the strong upper bound \eqref{eq:bound_cumulant} stemming from the theory of dependency graphs developed in the framework of mod-Gaussian convergence, we would not have been able to prove the convergence in joint moments used in the proof of Theorem \ref{thm:additional_equations}. Thus, the upper bounds on cumulant established in \cite{FMN16}*{Section 9} are a crucial argument in order to obtain the hidden equations satisfied by the singular models.

\begin{remark}
In Section \ref{sec:convergence}, we shall establish a much better upper bound than \eqref{eq:bound_cumulant} on the joint cumulants of the random variables $S_n(F)$ when $\gamma$ is singular (Theorem \ref{thm:upper_bound}). This result will rely crucially on Theorem \ref{thm:additional_equations}, and therefore on Equation \eqref{eq:bound_cumulant}. Hence, the \emph{generic} upper bound on cumulants, which is valid for any graphon $\gamma$, is needed in order to prove the \emph{singular} upper bound on cumulants, which is only valid for a singular graphon.
\end{remark}

\begin{example}\label{ex:cutting_trees}
Just after the statement of our Conjecture, we explained that given a singular graphon with edge density $p$, the only equation for trees with size $4$ stemming directly from Formula \eqref{eq:singular_3} was
$$p^3 = \frac{2}{3}\,t(\begin{tikzpicture}[baseline=-1mm,scale=0.5]
\foreach \x in {(0,0),(1,0),(2,0),(3,0)}
\fill \x circle (4pt);
\draw (0,0) -- (3,0);
\end{tikzpicture},\,\gamma) + \frac{1}{3}\, t\!\left(\begin{tikzpicture}[baseline=2mm,scale=0.5]
\foreach \x in {(0,0),(1,0),(2,0),(1,1)}
\fill \x circle (4pt);
\draw (0,0) -- (2,0);
\draw (1,0) -- (1,1);
\end{tikzpicture},\,\gamma\right),$$
because of the vanishing of $\kappa_2(\edgegraph\,,\,\begin{tikzpicture}[baseline=-1mm,scale=0.5]
\foreach \x in {(0,0),(1,0),(2,0)}
\fill \x circle (4pt);
\draw (0,0) -- (2,0);
\end{tikzpicture})(\gamma)$. However, by Theorem \ref{thm:additional_equations}, we now also have $\kappa_3(\edgegraph\,,\edgegraph\,,\edgegraph)(\gamma)=0$, and the observable on the left-hand side of this equation is equal to
$$  24\,\,\begin{tikzpicture}[baseline=-1mm,scale=0.5]
\foreach \x in {(0,0),(1,0),(2,0),(3,0)}
\fill \x circle (4pt);
\draw (0,0) -- (3,0);
\end{tikzpicture}\,+\,8\,\begin{tikzpicture}[baseline=2mm,scale=0.5]
\foreach \x in {(0,0),(1,0),(2,0),(1,1)}
\fill \x circle (4pt);
\draw (0,0) -- (2,0);
\draw (1,0) -- (1,1);
\end{tikzpicture} \,- 72\, \begin{tikzpicture}[baseline=2mm,scale=0.5]
\foreach \x in {(0,0),(1,0),(2,0),(0.5,1),(1.5,1)}
\fill \x circle (4pt);
\draw (0,0) -- (2,0);
\draw (0.5,1) -- (1.5,1);
\end{tikzpicture} + 40\, (\edgegraph)^3 .$$
We already know that when evaluated on a singular graphon with edge density $p$, the two last terms yield a contribution $-72p^3+40p^3=-32p^3$, so we obtain
$$p^3 = \frac{3}{4} \,t(\begin{tikzpicture}[scale=1,baseline=-1mm]
\foreach \x in {(0,0),(0.5,0),(1,0),(1.5,0)}
\fill \x circle (2pt);
\draw (0,0) -- (1.5,0);
\end{tikzpicture},\gamma)+\frac{1}{4}\,t\!\left(\begin{tikzpicture}[scale=1,baseline=2mm]
\foreach \x in {(0,0),(0.5,0),(0.5,0.5),(1,0)}
\fill \x circle (2pt);
\draw (0,0) -- (1,0);
\draw (0.5,0) -- (0.5,0.5);
\end{tikzpicture},\,\gamma\right).$$
This equation is independent from the previous one, so the two trees with $3$ edges both have density $p^3$ in the singular graphon $\gamma$. This argument makes one wonder if it is possible to use the equations from Theorem \ref{thm:additional_equations} in order to prove that $t(T,\gamma)=p^{|E_T|}$ for any tree $T$ (we shall see in the next Section another much simpler argument which proves this result). It turns out that these equations determine the tree observables of a singular graphon for all the trees of size $k\leq 11$. For the trees of size $12$, there are $551$ such trees (up to isomorphism), and the system of equations which one can write by using Theorem \ref{thm:additional_equations} and the identities $t(T,\gamma)=p^{|E_T|}$ for $|E_T|\leq 10$ is only of rank $550$. One can overcome this obstruction either by using Sidorenko's inequality (see Remark \ref{rem:sidorenko}), or by using the argument on join-transitive graphs which we are now going to present.
\end{example}
\bigskip

\section{Join-transitive subgraphs and spectral properties of the singular graphons}\label{sec:spectral}

In Subsection \ref{sub:factorisation}, we establish a set of equations satisfied in the singular case by the observables corresponding to the join-transitive motives (see Definition \ref{def:join_transitive} and Theorem \ref{thm:simple_equations}). This class of motives includes the trees; therefore, one is able to prove that a singular graphon with edge density $p$ has the same tree observables as the Erd\H{o}s--Rényi model $\gamma_p$. By combining this observation with a form of the matrix-tree theorem, we obtain in Subsection \ref{sub:determinant_laplacian} a formula for the expectation of the determinant of the diagonally modified Laplacian of the $W$-random graph $G_n(\gamma)$ associated to a singular graphon $\gamma$. Then, in Subsection \ref{sub:hilbert_schmidt}, we examine in full generality the behavior of the characteristic polynomial of the adjacency matrix of a $W$-random graph. We are not able to relate the two expected characteristic polynomials
$$\esper\!\left[\det\!\left(I_n+z\,\frac{A_n(\gamma)}{n}\right)\right]$$
and 
$$\left.\esper\!\left[\det\!\left((1+pz)I_n + z\,\diag(\eps_1,\ldots,\eps_n) - z\,\frac{L_n(\gamma)}{n}\right)\right]\right|_{\eps_1=\eps_1(\gamma),\ldots,\eps_n=\eps_n(\gamma)},$$
but at the end of this section we provide an argument which opens the way to a possible spectral solution of our Conjecture. 
\medskip

\subsection{Factorisation of the join-transitive densities}\label{sub:factorisation}
Recall that a (finite) graph $F$ is said to be \emph{transitive} if, for any pair of vertices $(i,j) \in (V_F)^2$, there exists a graph automorphism $\psi : V_F \to V_F$ such that $\psi(i)=j$.

\begin{definition}\label{def:join_transitive}
We say that a finite graph $F$ is join-transitive if there exists a family of transitive graphs $F_1,\ldots,F_r$ such that $F$ can be obtained recursively by junction of these transitive motives. Hence, there exists $(i_1,j_1) \in V_{F_1} \times V_{F_2}$, $(i_2,j_2) \in V_{(F_1 \join F_2)(i_1,j_1)}\times V_{F_3}$, \emph{etc.} such that
$$F = ((F_1 \join F_2(i_1,j_1))\join F_3(i_2,j_2))\join \cdots  \join F_r(i_{r-1},j_{r-1}).$$
\end{definition}

\begin{example}
The graph 
\begin{center}
\begin{tikzpicture}[scale=1]
\foreach \x in {(0,0),(0,1),(-1,0),(-1,1),(1.5,0),(2.5,0),(1.5,-1),(-0.5,-1),(0.5,-1)}
\fill \x circle (2pt);
\draw [thick] (0,0) -- (-0.5,-1) -- (0.5,-1) -- (0,0) -- (-1,0) -- (-1,1) -- (0,1) -- (0,0) -- (1.5,0) -- (2.5,0);
\draw [thick] (1.5,0) -- (1.5,-1);
\end{tikzpicture}
\end{center}

\noindent is join-transitive, as it can be obtained by recursive junction of edges, of the triangle graph and of the square graph. On the other hand, the diamond-shaped graph $F_1=\begin{tikzpicture}[baseline=-1mm,scale=0.5]
\foreach \x in {(0,0),(1,0),(0.5,0.5),(0.5,-0.5)}
\fill \x circle (3pt);
\draw (0,0) -- (0.5,0.5) -- (1,0) -- (0.5,-0.5) -- (0,0);
\draw (0,0) -- (1,0);
\end{tikzpicture}$ is not join-transitive, since it is not transitive (some vertices have degree $2$ and some vertices have degree $3$) and it is not a join of smaller graphs. It is actually the smallest graph which is not join-transitive.
\end{example}

In the sequel, given a join-transitive motive $F$, we write $F=F_1\join F_2 \join \cdots \join F_r$ if $F$ can be obtained by recursive junction of the transitive motives $F_1,\ldots,F_r$ as in Definition \ref{def:join_transitive}; thus, we omit the indices of the vertices of junction.

\begin{theorem}\label{thm:simple_equations}
Let $\gamma$ be a singular graphon. For any join-transitive motive $F=F_1\join F_2 \join \cdots \join F_r$, 
$$t(F,\gamma) = \prod_{i=1}^r t(F_i,\gamma).$$
\end{theorem}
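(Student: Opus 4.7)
The plan is to reduce the theorem to a rigidity statement for a ``rooted'' density function, obtained by combining Equation~\eqref{eq:singular_3} with vertex-transitivity. For a finite graph $F_k$ with a distinguished vertex $v$, introduce the rooted density
$$h_v(x) = \int_{[0,1]^{|V_{F_k}|-1}} \left(\prod_{\{a,b\} \in E_{F_k}} g(x_a, x_b)\right) \prod_{a \neq v} \DD{x_a},$$
with $x_v = x$ held fixed and $g$ any representative of $\gamma$. By construction $\int_0^1 h_v(x)\DD{x} = t(F_k,\gamma)$. The key lemma I would prove is: \emph{if $F_k$ is transitive and $\gamma$ is singular, then $h_v$ is independent of $v$ and equals the constant $t(F_k,\gamma)$ almost everywhere.}

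First, transitivity immediately gives $h_v = h_{v'}$ for all $v,v' \in V_{F_k}$: any automorphism $\psi$ of $F_k$ sending $v$ to $v'$ yields, via the change of variables $y_a = x_{\psi^{-1}(a)}$, a direct identification of the two integrals. Denote the common function by $h$. Next I would apply \eqref{eq:singular_3} to the pair $F_1 = F_2 = F_k$: transitivity forces every join $(F_k \join F_k)(i_1,i_2)$ to be isomorphic to $(F_k \join F_k)(v,v)$, so the right-hand side of \eqref{eq:singular_3} collapses to the single term $t((F_k \join F_k)(v,v),\gamma)$, which by splitting the integral across the shared coordinate equals $\int_0^1 h(x)^2 \DD{x}$. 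Thus the singularity condition becomes
$$t(F_k,\gamma)^2 = \int_0^1 h(x)^2 \DD{x},$$
while by definition $t(F_k,\gamma) = \int_0^1 h(x) \DD{x}$. The equality case of Cauchy--Schwarz (applied against the constant function $1$) then forces $h$ to be a.e.\ constant, the constant necessarily being $t(F_k,\gamma)$.

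With this lemma in hand, I would conclude the theorem by induction on $r$. The base case $r=1$ is tautological. For the inductive step, write $F = H \join F_r$ with $H = F_1 \join \cdots \join F_{r-1}$, and let $i_{r-1} \in V_H$, $j_{r-1} \in V_{F_r}$ be the vertices identified by the final junction. Splitting the integral defining $t(F,\gamma)$ across the shared coordinate $y$ yields
$$t(F,\gamma) = \int_0^1 h^{H}_{i_{r-1}}(y)\, h^{F_r}_{j_{r-1}}(y) \DD{y},$$
with the obvious rooted-density notation. By the lemma applied to the transitive factor $F_r$ we have $h^{F_r}_{j_{r-1}}(y) = t(F_r,\gamma)$ a.e., so factoring this constant out gives $t(F,\gamma) = t(F_r,\gamma)\,t(H,\gamma)$. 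The inductive hypothesis supplies $t(H,\gamma) = \prod_{i<r} t(F_i,\gamma)$, whence the claim.

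There is no serious obstacle. The proof rests on the observation that, in the specific case of two copies of a transitive graph, Equation~\eqref{eq:singular_3} becomes an equality case of Cauchy--Schwarz for the rooted density, and this rigidity is precisely what is needed to propagate a product factorization across successive joins. Notably, the argument bypasses the ``hidden equations'' of Theorem~\ref{thm:additional_equations} entirely, which explains why the join-transitive case is considerably more accessible than the general Conjecture.
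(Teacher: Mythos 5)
Your proposal is correct and follows essentially the same route as the paper: the rooted density $h_v$ is exactly the paper's marked density $t(F^{\bullet v},g,x)$, the key lemma (constancy of $h$ for transitive $F$) is proved there by observing that the random variable $t(F^{\bullet 1},g,X)$ has variance zero, which is the same rigidity you extract from the equality case of Cauchy--Schwarz, and the inductive factorisation across the junction vertex is identical. No gaps.
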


In order to prove this result, let us introduce the notion of density of a marked graph in a graphon $\gamma=[g]$. Let $F$ be a motive with vertex set $V_F=\lle 1,k\rre$, and $i$ be an index in $\lle 1,k\rre$. The \emph{marked density} $t(F^{\bullet i},g)$ is defined by
$$t(F^{\bullet i},g,x_i) = \int_{[0,1]^{k-1}} \left(\prod_{\{j_1,j_2\} \in E_T} g(x_{j_1},x_{j_2}) \right)\DD{x_1}\cdots \DD{x_{i-1}}\DD{x_{i+1}}\cdots \DD{x_k};$$
hence, we consider the same function as in the definition of $t(F,\gamma)$, and we integrate all the variables except $x_i$. The function which one obtains is in $\leb^\infty([0,1],\DD{x})$ and it satisfies
$$\int_{0}^1 t(F^{\bullet i},g,x_i) \DD{x_i} = t(F,\gamma).$$
Notice that $t(F^{\bullet i},g,x_i)$ usually depends on the choice of a graph function $g$ representative of the graphon $\gamma$.
\begin{lemma}
 Let $F^{\bullet i}$ be a marked transitive graph, and $\gamma$ be a singular graphon. For any graph function $g$ in $\gamma$, the marked density
 $t(F^{\bullet i},g,x)$:
 \begin{itemize}
     \item does not depend on the marked index $i \in V_F$;
     \item does not depend on the choice of $g \in \gamma$;
     \item is almost everywhere constant as a function on $[0,1]$.
 \end{itemize}
 \end{lemma}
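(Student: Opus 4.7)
My plan is to reduce all three assertions to a single identity obtained from the singularity condition \eqref{eq:singular_3} applied to $F_1=F_2=F$. I begin with the independence of the marked index, which is a purely formal consequence of transitivity and does not use singularity: if $\psi$ is an automorphism of $F$ sending $i$ to $j$, then the change of variables $y_{\psi(l)}=x_l$ for $l\neq i$ inside the integral defining $t(F^{\bullet i},g,x_i)$ turns it into the integral defining $t(F^{\bullet j},g,y_j)$ with $y_j=x_i$, because $\{\psi(a),\psi(b)\}\in E_F$ if and only if $\{a,b\}\in E_F$. Thus $t(F^{\bullet i},g,\cdot)=t(F^{\bullet j},g,\cdot)$ as elements of $\leb^\infty([0,1])$.

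The substantive step is the computation
$$\int_0^1 t(F^{\bullet i},g,x)^2\DD{x} \,=\, t((F\join F)(i,i),\gamma),$$
which holds by Fubini, since the right-hand side is the integral of $\prod_{\text{edges}} g$ against $\prod\DD{x}$ over two copies of $F$ glued at $x_i=x$. Transitivity of $F$ now implies that $t((F\join F)(i_1,i_2),\gamma)$ does not depend on the pair $(i_1,i_2)\in V_F^2$: any pair of automorphisms $(\psi_1,\psi_2)$ of $F$ sending $(i_1,i_2)$ to $(j_1,j_2)$ induces an isomorphism between the two glued graphs. Applying the singularity equation \eqref{eq:singular_3} with $F_1=F_2=F$, the right-hand side collapses to $k^2\,t((F\join F)(i,i),\gamma)$, yielding
$$t(F,\gamma)^2 \,=\, t(F\,F,\gamma) \,=\, t((F\join F)(i,i),\gamma) \,=\, \int_0^1 t(F^{\bullet i},g,x)^2\DD{x}.$$

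To finish, observe that $\int_0^1 t(F^{\bullet i},g,x)\DD{x}=t(F,\gamma)$, so the displayed identity is precisely the equality case of the Cauchy--Schwarz inequality $\bigl(\int f\bigr)^2\leq \int f^2$ applied to $f=t(F^{\bullet i},g,\cdot)$ against the constant function $1$ on $[0,1]$. This forces $f$ to be almost everywhere constant, with almost-everywhere value equal to $t(F,\gamma)$; this proves the third bullet point. The second then follows automatically: since $t(F,\gamma)$ depends only on the equivalence class $\gamma$, two representatives $g_1,g_2\in\gamma$ give marked densities that agree almost everywhere. The main obstacle is the middle identity, where transitivity is essential to collapse the sum on the right-hand side of \eqref{eq:singular_3} and singularity is essential so that this equation is available in the first place; once that is in hand the remaining claims reduce to a short application of Cauchy--Schwarz.
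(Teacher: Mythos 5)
Your proof is correct and follows essentially the same route as the paper: the paper likewise identifies $\int_0^1 t(F^{\bullet 1},g,x)^2\DD{x}$ with $t((F\join F)(1,1),\gamma)$, uses transitivity to see that all joins $(F\join F)(i_1,i_2)$ coincide so that $\kappa_2(F,F)(\gamma)=0$ reduces to $t((F\join F)(1,1),\gamma)=t(F,\gamma)^2$, and concludes that the random variable $t(F^{\bullet 1},g,X)$ has variance zero — which is exactly your Cauchy--Schwarz equality case in probabilistic language. (Only a cosmetic slip: the full right-hand side of \eqref{eq:singular_3} collapses to $t((F\join F)(i,i),\gamma)$, not $k^2$ times it; your subsequent displayed chain of equalities is the correct one.)
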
 

\begin{proof}
Since $F$ is transitive, for any indices $i \neq j$, there exists a graph automorphism $\psi : V_F \to V_F$ such that $\psi(i) = j$. Then,
\begin{align*}
&t(F^{\bullet j},g,x) \\
&= \int_{[0,1]^{k-1}} \left(\prod_{\substack{\{l_1,l_2\} \in E_T\\ l_1,l_2 \neq j}} g(x_{l_1},x_{l_2})\prod_{l_3\,|\,\{j,l_3\} \in E_T} g(x,x_{l_3})\right) \DD{x_1}\cdots \DD{x_{j-1}}\DD{x_{j+1}}\cdots \DD{x_k} \\
&= \int_{[0,1]^{k-1}} \left(\prod_{\substack{\{l_1,l_2\} \in E_T\\ l_1,l_2 \neq i}} g(x_{\psi(l_1)},x_{\psi(l_2)}) \prod_{l_3\,|\,\{i,l_3\} \in E_T} g(x,x_{\psi(l_3)})\right) \DD{x_{\psi(1)}}\cdots \DD{x_{\psi(i-1)}}\DD{x_{\psi(i+1)}}\cdots \DD{x_{\psi(k)}}\\
&=\int_{[0,1]^{k-1}} \left(\prod_{\substack{\{l_1,l_2\} \in E_T\\ l_1,l_2 \neq i}} g(y_{l_1},y_{l_2}) \prod_{l_3\,|\,\{i,l_3\} \in E_T} g(x,y_{l_3})\right) \DD{y_{1}}\cdots \DD{y_{i-1}}\DD{y_{i+1}}\cdots \DD{y_{k}}\\
&= t(F^{\bullet i},g,x).
\end{align*}
The second item of the lemma is a consequence of the third item, because if $t(F^{\bullet i},g,x)$ is a constant, then it is equal to $\int_0^1 t(F^{\bullet i},g,x)\DD{x} = t(F,\gamma)$, which does not depend on the choice of a representative $g$. To establish this third item, notice that if $F$ is transitive, then a join $(F \join F)(i,j)$ does not depend on the pair of indices $(i,j) \in (V_F)^2$. Indeed, if $(i',j')$ is another pair, then there exists two graph automorphisms $\psi$ and $\phi$ of $F$ such that $\psi(i)=i'$ and $\phi(j) = j'$, and these automorphisms can be combined in order to build an isomorphism of graphs between $(F\join F)(i,j)$ and $(F \join F)(i',j')$. So,
$$\kappa_2(F,F) = |V_F|^2\,\big((F\join F)(1,1) - F^2).$$
Consider the random variable $t(F^{\bullet 1},g,X)$ with $X$ chosen uniformly in the interval $[0,1]$. We have $\esper[t(F^{\bullet 1},g,X)] = t(F,\gamma)$, and
\begin{align*}
\esper&[(t(F^{\bullet 1},g,X))^2] = \int_0^1 (t(F^{\bullet 1},g,x))^2\DD{x}\\
&=\int_{[0,1]^{2k-1}}  \left(\prod_{\{j_1,j_2\} \in E_T} g(x_{j_1},x_{j_2}) \prod_{\{l_1,l_2\} \in E_T} g(y_{l_1},y_{l_2})\right)\DD{x} \DD{x_2}\cdots \DD{x_k}\DD{y_2} \cdots \DD{y_k}
\end{align*}
with $x_1=y_1=x$ in this integral over $2k-1$ variables. This is the formula for the graph density $t((F\join F)(1,1),\gamma)$. As $\kappa_2(F,F)(\gamma)=0$, we conclude that the random variable $t(F^{\bullet 1},g,X)$ has variance $0$, hence is almost everywhere constant.
\end{proof}

\begin{proof}[Proof of Theorem \ref{thm:simple_equations}]
By induction, it suffices to prove that if $\gamma=[g]$ is a singular graphon, $F$ is an arbitrary motive and $G$ is a transitive motive, then $t((F\join G)(i,j),\gamma)=t(F,\gamma)\,t(G,\gamma)$ for any pair of indices $(i,j) \in V_F \times V_G$. However, this is a trivial consequence of the fact that $t(G^{\bullet j},g,x)$ is constant equal to $t(G,\gamma)$:
\begin{align*}
t((F\join G)(i,j),\gamma) &= \int_{0}^1 t(F^{\bullet i},g,x)\,t(G^{\bullet j},g,x) \DD{x} \\
&= t(G,\gamma)\,\int_{0}^1 t(F^{\bullet i},g,x) \DD{x}  = t(F,\gamma)\,t(G,\gamma).\qedhere
\end{align*}
\end{proof}

\begin{corollary}\label{cor:tree_densities}
Let $\gamma$ be a singular graphon with edge density $p$. For any tree of forest $F$, $t(F,\gamma)=p^{|E_F|}$.
\end{corollary}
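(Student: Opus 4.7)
The plan is to apply Theorem \ref{thm:simple_equations} after observing that trees are essentially the simplest join-transitive graphs, obtained by recursive junction of edges.

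First I would handle the base case: the edge $\edgegraph$ is transitive (its two vertices are swapped by the nontrivial graph automorphism), and its density is $t(\edgegraph,\gamma) = p$ by definition of the edge density. I would then prove by induction on $|V_T|$ that every (nonempty) tree $T$ is join-transitive with all constituents being edges. For the inductive step, if $T$ has at least two vertices, choose any leaf $v \in V_T$ and let $u$ be its unique neighbor. Removing $v$ yields a tree $T'$ with one fewer vertex, and
$$T = T' \join \edgegraph,$$
where the junction identifies $u \in V_{T'}$ with one endpoint of the edge. By the induction hypothesis, $T'$ is itself a recursive junction of edges, hence so is $T$.

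Next, since $\gamma$ is singular, Theorem \ref{thm:simple_equations} applies to $T = \edgegraph \join \edgegraph \join \cdots \join \edgegraph$ ($|E_T|$ copies of the edge) and gives
$$t(T,\gamma) = \prod_{i=1}^{|E_T|} t(\edgegraph,\gamma) = p^{|E_T|}.$$
This establishes the claim for trees. For a forest $F = T_1 \sqcup T_2 \sqcup \cdots \sqcup T_m$, the density factorises over disjoint unions: as recalled in the definition of the algebra $\obs$, disjoint union is the product in $\obs$, so the corresponding evaluation satisfies $t(T_1 \sqcup \cdots \sqcup T_m,\gamma) = \prod_{i=1}^m t(T_i,\gamma)$ (this is also immediate from the integral definition of $t(\cdot,\gamma)$, as the integrand factorises over the connected components). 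Applying the tree case to each $T_i$ and noting that $|E_F| = \sum_i |E_{T_i}|$ yields
$$t(F,\gamma) = \prod_{i=1}^m p^{|E_{T_i}|} = p^{|E_F|},$$
which is the desired identity. There is no genuine obstacle here: the corollary is a direct consequence of Theorem \ref{thm:simple_equations} combined with the elementary observation that trees can be grown one edge at a time by leaf attachment.
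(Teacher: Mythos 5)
Your proof is correct and takes essentially the same route as the paper: the paper's proof is the one-line observation that trees are obtained by recursive junction of the transitive motive $\edgegraph$, so Theorem \ref{thm:simple_equations} factorises the density over the edges, and the forest case follows by multiplicativity of $t(\cdot,\gamma)$ over disjoint unions. Your write-up merely spells out the leaf-removal induction and the disjoint-union factorisation that the paper leaves implicit.
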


\begin{proof}
The trees are obtained recursively by junction of the transitive motive $\edgegraph$, so we can factorise over the edges any tree or forest density of a singular graphon.
\end{proof}

\begin{remark}\label{rem:sidorenko}
Corollary \ref{cor:tree_densities} can be understood as a minimality property of the tree observables of the singular graphons. Indeed, a theorem due to Sidorenko states that for any graphon $\gamma=[g]$ with edge density $t(\edgegraph,\gamma) = \iint_{[0,1]^2} g(x,y) \DD{x}\DD{y} = p$, and for any tree $T$, we have
$$t(T,\gamma)\geq p^{|E_T|}.$$
This result is a particular case of \cite{Sid92}*{Corollary 1}; the inequality holds for many other graphs, including the even cycles and the complete bipartite graphs. It is actually conjectured that any bipartite graph $F$ satisfies the inequality $t(F,\gamma) \geq (t(\edgegraph,\gamma))^{|E_F|}$ for any graphon $\gamma$; see \cites{Hat10,CFS10,KLL16,CKLL18} for recent developments around this conjecture. 
\end{remark}
\medskip

\subsection{Determinants of the diagonally modified Laplacian operators}\label{sub:determinant_laplacian}
Let us now investigate the consequences of Theorem \ref{thm:simple_equations} and its Corollary \ref{cor:tree_densities} in terms of the spectral properties of the singular graphons. If $G$ is a finite graph with vertex set $V_G=\lle 1,n\rre$ and with adjacency matrix $A_G = (1_{(i \sim_G j)})_{1\leq i,j\leq n}$, we recall that its Laplacian matrix is 
$$L_G = D_G-A_G = \diag(d_1,\ldots,d_n) - A_G,$$
where $d_i = \deg(i,G)=\sum_{j \neq i}1_{i\sim j}$ is the degree of the vertex $i$ if $G$. The characteristic polynomial of this matrix is related to the \emph{spanning forests} $F$ of $G$: they are the subgraphs of $G$ with the same vertex set, without cycle and with $E_F \subset E_G$. A \emph{spanning tree} of $G$ is a connected spanning forest; it exists if and only if $G$ is connected. The spanning trees of (hyper)graphs shall play an important role in the next section. Here, we shall use the following identity: for any graph $G$ on $n$ vertices and diagonal matrix $\Delta_n = \diag(\delta_1,\delta_2,\ldots,\delta_n)$, we have:
\begin{equation}
\det(\Delta_n+L_G) = \sum_{F \text{ spanning forest of }G} \left(\prod_{T \subset F} \left(\sum_{x \in T} \delta_x\right)\right). \label{eq:MT1}\tag{MT1}
\end{equation}
This generalisation of Kirchhoff's matrix-tree theorem is proved in the appendix at the end of the paper. It implies the following combinatorial formula for the number of rooted spanning forests of the complete graph $K_n$ with $k \geq 1$ connected components:
\begin{equation}
\text{number of families of $k$ disjoint rooted trees covering a set of $n$ vertices} = n^{n-k}\,\binom{n-1}{k-1}.\label{eq:MT2}\tag{MT2}
\end{equation}
In the following, we fix a singular graphon $\gamma$ with edge density $p$, and we denote $L_n(\gamma)$ the Laplacian matrix of the $W$-random graph $G_n(\gamma)$. Given $z \in \C$, we then set
$$M_n(z,\gamma;\eps_1,\ldots,\eps_n) = (1+pz)I_n +z\,\diag(\eps_1,\ldots,\eps_n)- z\,\frac{L_n(\gamma)}{n},$$
where the $\eps_i$'s are arbitrary real numbers. A key observation is that if 
$$\eps_i = \eps_i(\gamma) = \frac{\deg(i,G_n(\gamma))}{n}-p,$$
then $M_n(z,\gamma;\eps_1(\gamma),\ldots,\eps_n(\gamma)) = I_n + z\,\frac{A_n(\gamma)}{n}$, where $A_n(\gamma)$ is the adjacency matrix of $G_n(\gamma)$. For a singular graphon with edge density $p$, the parameters $\eps_i(\gamma)$ are expected to be of small size; see Proposition \ref{prop:concentration_epsi}. So, $I_n+z\,\frac{A_n(\gamma)}{n}$ is a small random perturbation of the matrix $M_n(z,\gamma;0,\ldots,0)$. On the other hand, as we shall explain in the next paragraph, the evaluation of the characteristic polynomials of the matrices $\frac{A_n(\gamma)}{n}$ provides us with a way to prove that a graphon $\gamma$ is equal to a constant graphon $\gamma_p$. Therefore, it is natural ot try to compute the (expectations of the) determinants of the matrices $M_n(z,\gamma;\eps_1,\ldots,\eps_n)$ for arbitrary parameters $\eps_i \in \R$. The combinatorial formulas \eqref{eq:MT1} and \eqref{eq:MT2} lead readily to the following:

\begin{proposition}\label{prop:expectation_laplacian}
Consider a singular graphon $\gamma$ with edge density $p$. For any real parameters $\eps_1,\ldots,\eps_n$, the determinant
$$D\!L_n(z,\gamma;\eps_1,\ldots,\eps_n) = \det\!\left((1+pz)I_n + z\,\diag(\eps_1,\ldots,\eps_n) - z\,\frac{L_n(\gamma)}{n}\right)$$
has for expectation
$$\esper[D\!L_n(z,\gamma;\eps_1,\ldots,\eps_n)] = \left(\prod_{i=1}^n (1+\eps_i z)\right)\left(\frac{1}{n}\sum_{i=1}^n \frac{1+pz+\eps_i z}{1+\eps_i z}\right).$$
\end{proposition}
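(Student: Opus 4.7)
The plan is to combine the matrix-tree identity \eqref{eq:MT1} with the factorisation of tree densities (Corollary \ref{cor:tree_densities}), reducing everything to a rank-one determinant calculation on the complete graph.

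First I would apply \eqref{eq:MT1} to the deterministic graph $G_n(\gamma)$. Writing $\mu_i = 1+pz+\eps_i z$, we have $M_n = \diag(\mu_i) - \frac{z}{n}L_n(\gamma)$. Factoring out the scalar $-\frac{z}{n}$ inside the determinant and applying \eqref{eq:MT1} with $\delta_i = -\frac{n\mu_i}{z}$ (the factor $(-z/n)^n$ from the scaling combines with the $(-n/z)^{c(F)}$ coming from the product over connected components), yields
$$D\!L_n(z,\gamma;\eps_1,\ldots,\eps_n) = \sum_{F \text{ spanning forest of }G_n(\gamma)} \left(-\frac{z}{n}\right)^{|E_F|}\prod_{T \subset F}\left(\sum_{x \in T}\mu_x\right).$$

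Next I would take the expectation. The sum over spanning forests of the random graph $G_n(\gamma)$ becomes a sum over all spanning forests $F$ of $K_n$, weighted by $\proba[F \subset G_n(\gamma)]$. Integrating out the $U_{ij}$'s first, this probability equals $\esper[\prod_{\{i,j\} \in E_F} g(X_i,X_j)] = t(F,\gamma)$ (isolated vertices contribute trivially). Since $F$ is a forest, this factorises over connected components: $t(F,\gamma) = \prod_T t(T,\gamma)$. This is where the singularity hypothesis enters decisively, via Corollary \ref{cor:tree_densities}: each tree satisfies $t(T,\gamma) = p^{|E_T|}$, so $\proba[F \subset G_n(\gamma)] = p^{|E_F|}$. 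Consequently
$$\esper[D\!L_n] = \sum_{F \text{ spanning forest of } K_n}\left(-\frac{pz}{n}\right)^{|E_F|}\prod_{T \subset F}\left(\sum_{x \in T}\mu_x\right).$$

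To evaluate the combinatorial sum, I would recognise it as the output of \eqref{eq:MT1} applied in the reverse direction to the Laplacian of the \emph{weighted} complete graph on $\lle 1,n\rre$ with uniform edge weight $-\frac{pz}{n}$, with diagonal part $\diag(\mu_i)$. Since this weighted Laplacian equals $-pz\, I_n + \frac{pz}{n}J$ where $J=\mathbf{1}\mathbf{1}^T$ is the all-ones matrix, a short check gives
$$\esper[D\!L_n] = \det\!\left(\diag(\mu_i - pz) + \frac{pz}{n}J\right) = \det\!\left(\diag(1+\eps_i z) + \frac{pz}{n}\mathbf{1}\mathbf{1}^T\right).$$
If one prefers to avoid quoting the weighted matrix-tree identity, the same conclusion can be reached by grouping forests by their number of components $c$ and using \eqref{eq:MT2}: the symmetry of $K_n$ under $\sym(n)$ implies that the number of rooted spanning forests of $K_n$ with $c$ components and a prescribed set of $c$ roots equals $c\,n^{n-c-1}$, which collapses the forest sum into an explicit polynomial in the elementary symmetric functions $e_c(\mu)$.

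Finally I would apply the matrix determinant lemma to the rank-one update $\frac{pz}{n}\mathbf{1}\mathbf{1}^T$:
$$\det\!\left(\diag(1+\eps_i z) + \tfrac{pz}{n}\mathbf{1}\mathbf{1}^T\right) = \left(\prod_{i=1}^n(1+\eps_i z)\right)\left(1 + \frac{pz}{n}\sum_{i=1}^n \frac{1}{1+\eps_i z}\right),$$
and the parenthesised factor rewrites as $\frac{1}{n}\sum_i \frac{1+pz+\eps_i z}{1+\eps_i z}$, yielding the announced formula. The only delicate step is the forest-level bookkeeping after taking expectations; Corollary \ref{cor:tree_densities} is what allows the probabilities to depend on $F$ only through $|E_F|$, and this is exactly the feature that turns the random determinant into a rank-one perturbation on $K_n$.
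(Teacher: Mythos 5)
Your proof is correct, and its first two steps --- expanding the determinant through \eqref{eq:MT1} into a sum over spanning forests of $G_n(\gamma)$, then taking expectations and invoking Corollary \ref{cor:tree_densities} to replace $\proba[F \subset G_n(\gamma)]=t(F,\gamma)$ by $p^{|E_F|}$ --- coincide exactly with the paper's argument; the singularity hypothesis enters at the same point in both. Where you diverge is the evaluation of the resulting forest sum over $K_n$. The paper passes to \emph{rooted} forests, counts those with $k$ components and prescribed roots via \eqref{eq:MT2}, obtains $\frac{1}{n}\sum_{k=1}^n k\,(-pz)^{n-k}\,e_k(\overline{\delta}_1,\ldots,\overline{\delta}_n)$ with $\overline{\delta}_i=1+pz+\eps_i z$, and resums by differentiating $\prod_i(1+t\overline{\delta}_i)$ and setting $t=-\frac{1}{pz}$. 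You instead run the matrix-tree identity backwards on the complete graph with uniform edge weight $-\frac{pz}{n}$, identifying the sum as $\det\bigl(\diag(1+\eps_i z)+\frac{pz}{n}\mathbf{1}\mathbf{1}^T\bigr)$, and finish with the matrix determinant lemma. Your endgame is arguably more conceptual: it makes visible that for a singular graphon the expected Laplacian determinant equals that of a deterministic weighted $K_n$, i.e.\ a rank-one perturbation of a diagonal matrix. The weighted form of \eqref{eq:MT1} you need is covered by the identity \eqref{eq:MT} of the appendix with all vertex weights equal (both sides being polynomial identities, the substitution is harmless, as is the generic invertibility needed for the determinant lemma), and the alternative you flag via \eqref{eq:MT2} is precisely the paper's path, so nothing is missing.
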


\begin{proof}
Set $\delta_i = -n(z^{-1}+p+\eps_i)$. Equation \eqref{eq:MT1} can be rewritten as a sum over \emph{rooted} forests $F \subset G$, where by rooted forest we mean that we choose a root for each tree $T \subset F$:
$$\det(\Delta_n+L_G) = \sum_{\text{rooted forest }F \subset G} \left(\prod_{(T,r) \subset F} \delta_r\right).$$
With $G = G_n(\gamma)$ and $\gamma$ singular graphon with edge density $p$, we therefore have:
\begin{align*}
\det(\Delta_n+L_n(\gamma)) &= \sum_{k=1}^n\sum_{\substack{F =(T_1,i_1) \sqcup \cdots \sqcup (T_k,i_k) \\ F \text{ rooted forest on $n$ vertices}}} 1_{F \subset G_n(\gamma)}\, \delta_{i_1}\cdots \delta_{i_k};\\
\esper[\det(\Delta_n+L_n(\gamma))] &= \sum_{k=1}^n\sum_{\substack{F =(T_1,i_1) \sqcup \cdots \sqcup (T_k,i_k) \\ F \text{ rooted forest on $n$ vertices}}} \proba[F \subset G_n(\gamma)]\, \delta_{i_1}\cdots \delta_{i_k} \\
&= \sum_{k=1}^n p^{n-k}\sum_{\substack{F =(T_1,i_1) \sqcup \cdots \sqcup (T_k,i_k) \\ F \text{ rooted forest on $n$ vertices}}} \,\delta_{i_1}\cdots \delta_{i_k}.
\end{align*}
By Equation \eqref{eq:MT2}, the total number of rooted forests with $k$ connected components on $n$ vertices is $n^{n-k}\,\binom{n-1}{k-1}$. Since the roots $i_1,\ldots,i_k$ play a symmetric role, the number of rooted forests with these $k$ roots is:
$$ n^{n-k}\,\frac{\binom{n-1}{k-1}}{\binom{n}{k}} = k\,n^{n-k-1} ,$$
and summing over the possible sets of roots $\{i_1,\ldots,i_k\}$ yields the elementary symmetric function $e_k(\delta_{1},\ldots,\delta_n)$. Thus,
$$\esper[\det(\Delta_n+L_n(\gamma)) ] =  \frac{1}{n}\sum_{k=1}^n k \,(np)^{n-k}\,e_k(\delta_1,\ldots,\delta_n) $$
for any singular graphon with edge density $p$. The rescaled determinant which we are interested in is 
$$D\!L_n(z,\gamma;\eps_1,\ldots,\eps_n) = \left(-\frac{z}{n}\right)^n\,\det(\Delta_n + L_n(\gamma)),$$
so
$$F_n(z,\gamma;\eps_1,\ldots,\eps_n) = \esper[D\!L_n(z,\gamma;\eps_1,\ldots,\eps_n)] = \frac{1}{n}\sum_{k=1}^n k\, (-pz)^{n-k}\,e_k(\overline{\delta}_1,\ldots,\overline{\delta}_n)  $$
with $\overline{\delta}_i = -\frac{z}{n}\,\delta_i = 1+pz+\eps_i z$. Recall that $\prod_{i=1}^n (1+t\overline{\delta}_i ) = 1+\sum_{k=1}^n t^k\,e_k(\overline{\delta}_1,\ldots,\overline{\delta}_n)$. By taking the derivative and setting $t=-\frac{1}{pz}$, we get:
$$F_n(z,\gamma;\eps_1,\ldots,\eps_n) = \left(\prod_{i=1}^n (\overline{\delta}_i-pz)\right)\left(\frac{1}{n}\sum_{i=1}^n \frac{\overline{\delta}_i}{\overline{\delta}_i-pz}\right),$$
which is the formula stated by the proposition.
\end{proof}

\begin{proposition}\label{prop:concentration_epsi}
Let $\gamma$ be a singular graphon with edge density $p$. For any $n \geq 2$ and any $x \geq 0$,
$$\proba\!\left[\max_{i \in \lle 1,n\rre} |\eps_i(\gamma)| \geq x\sqrt{\frac{\log n}{n}}\,\right] \leq 4\,n^{1-\frac{x^2}{8}}.$$
\end{proposition}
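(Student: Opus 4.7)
The approach is to use the singular hypothesis to show that each degree $D_i = \deg(i, G_n(\gamma))$ is exactly $\mathrm{Binomial}(n-1, p)$ distributed, and then to invoke Hoeffding's inequality together with a union bound over $i$.

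First, fix a representative $g$ of $\gamma$ and write $D_i = \sum_{j \neq i} 1_{(U_{ij} \leq g(X_i, X_j))}$. Conditionally on $X_i$, these edge indicators depend on the pairwise independent data $(X_j, U_{ij})_{j \neq i}$ and are therefore mutually independent, each being Bernoulli with parameter $d(X_i) := \int_0^1 g(X_i, y)\DD{y} = t(\edgegraph^{\bullet 1}, g, X_i)$. The key observation is that $d(X_i) = p$ almost surely. Indeed, since the edge $\edgegraph$ is a transitive graph, the lemma of Subsection \ref{sub:factorisation} asserts that the marked density $t(\edgegraph^{\bullet 1}, g, \cdot)$ is almost everywhere constant, and by integrating this constant must equal $t(\edgegraph, \gamma) = p$. (Alternatively, Corollary \ref{cor:tree_densities} applied to the path on three vertices gives $\int_0^1 d(x)^2 \DD{x} = p^2$, which combined with $\int_0^1 d(x)\DD{x} = p$ yields $\mathrm{Var}(d(X_i)) = 0$.) Hence $D_i$ is exactly a sum of $n-1$ i.i.d.\ $\mathrm{Bernoulli}(p)$ variables.

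Next, Hoeffding's inequality yields, for any $s \geq 0$,
\[
\proba[|D_i - (n-1)p| \geq s] \leq 2 \exp\!\left(-\frac{s^2}{2(n-1)}\right).
\]
Since $\eps_i(\gamma) = D_i/n - p = (D_i - (n-1)p)/n - p/n$, the event $\{|\eps_i(\gamma)| \geq x\sqrt{\log n/n}\}$ implies $|D_i - (n-1)p| \geq nx\sqrt{\log n/n} - p$. For $n \geq 2$ and $x \geq \sqrt{8}$, the correction $p \leq 1$ is at most half of $nx\sqrt{\log n/n}$, so the probability is bounded by $2 \exp(-x^2 n \log n / (8(n-1))) \leq 2 n^{-x^2/8}$. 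For $x < \sqrt{8}$, the claimed bound $4 n^{1-x^2/8}$ exceeds $4$ and the statement is vacuous. A union bound over $i \in \lle 1,n\rre$ then gives
\[
\proba\!\left[\max_{i \in \lle 1,n\rre} |\eps_i(\gamma)| \geq x\sqrt{\log n/n}\right] \leq 2 n \cdot n^{-x^2/8} = 2 n^{1-x^2/8} \leq 4 n^{1-x^2/8}.
\]

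The only substantive step is the reduction of $D_i$ to a $\mathrm{Binomial}(n-1,p)$, which relies crucially on the singular hypothesis through the almost-sure constancy of the marked edge density; everything else is standard concentration of measure. This is no harder than the analogous statement for the Erd\H{o}s--Rényi model $G(n,p)$, which is consistent with the general philosophy of the paper that singular graphons behave like constant graphons for low-order observables.
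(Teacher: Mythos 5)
Your proof is correct and rests on the same key input as the paper's: the almost-sure constancy of the marked edge density $t(\edgegraph^{\bullet 1},g,\cdot)=p$, which is where the singular hypothesis enters. The only difference is organisational. The paper conditions twice — first on $(X_1,\ldots,X_n)$ to control the $U_{ij}$-randomness, then on $X_1$ alone to control the $X_j$-randomness — applying Hoeffding at each stage and paying a factor $4$ and a doubled threshold $2t$. You instead condition only on $X_i$, observe that the edge indicators $(1_{(U_{ij}\leq g(X_i,X_j))})_{j\neq i}$ are then mutually (not merely pairwise, as you wrote) independent Bernoulli variables of parameter $d(X_i)=p$ a.s., and conclude that $\deg(i,G_n(\gamma))$ is \emph{exactly} $\mathrm{Binomial}(n-1,p)$, so a single application of Hoeffding suffices. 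This is a mild streamlining that makes the "singular graphons have Erd\H{o}s--Rényi degree distributions" phenomenon more transparent; your Hoeffding constant $\exp(-s^2/2(n-1))$ is weaker than the sharp $\exp(-2s^2/(n-1))$ but, as your computation shows, still delivers the stated exponent $x^2/8$, and your handling of the additive $p/n$ correction and of the vacuous range $x<\sqrt{8}$ matches the paper's.
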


\begin{proof}
We shall compute an upper bound for $\proba[|\eps_1(\gamma)| \geq t]$, and then take $n$ times this upper bound in order to control $\max_{i \in \lle 1,n\rre} |\eps_i(\gamma)|$. Let us fix a representative $g$ of the singular graphon $\gamma$. We consider independent uniform random variables $X_{i \in \lle 1,n\rre}$, such that conditionally to $(X_1,\ldots,X_n)$, the vertices $i$ and $j$ in $\lle 1,n\rre$ are connected according to independent Bernoulli variables with parameter $g(X_i,X_j)$. By Hoeffding's inequality for sums of independent bounded random variables \cite{Hoeff63}*{Theorem 1, Equation (2.3)},
$$\proba\!\left[\left|\deg(1,G_n(\gamma)) - \sum_{j=2}^n g(X_1,X_j) \right|\geq t\,\,\bigg|\,\,(X_1,\ldots,X_n)\right] \leq 2\,\E^{-\frac{2t^2}{n-1}}.$$ 
By using the same inequality, we also obtain:
$$\proba\!\left[\left|\sum_{j=2}^n g(X_1,X_j) - (n-1) \int_{0}^1 g(X_1,x)\DD{x}\right|\geq t\,\,\bigg|\,\,X_1\right] \leq 2\,\E^{-\frac{2t^2}{n-1}}.$$
Therefore,
$$\proba\!\left[\left|\deg(1,G_n(\gamma)) - (n-1) \int_{0}^1 g(X_1,x)\DD{x}\right|\geq 2t\,\, \bigg|\,\,X_1\right] \leq 4\,\E^{-\frac{2t^2}{n-1}}.$$
However, $\int_0^1 g(X_1,x)\DD{x}$ is the random variable $t(F^{\bullet 1},g,X_1)$ which we considered in the proof of Theorem \ref{thm:simple_equations}, with $F= \edgegraph$. So, it is almost surely constant to $p = t(\edgegraph,\gamma)$, and
\begin{align*}
\proba[|\deg(1,G_n(\gamma)) - (n-1) \,p|\geq 2t] &\leq 4\,\E^{-\frac{2t^2}{n-1}} \leq 4\,\E^{-\frac{2t^2}{n}};\\
\proba\!\left[|\eps_1(\gamma)| \geq \frac{u}{\sqrt{n}} + \frac{p}{n}\right] &\leq 4\,\E^{-\frac{u^2}{2}}.
\end{align*}
If $u \leq \sqrt{\log 8}$, then the right-hand side is larger than $1$. Otherwise, $\frac{u}{\sqrt{n}} + \frac{p}{n} \leq \frac{2u}{\sqrt{n}}$ if $n \geq 2$, so we conclude that for any $u>0$,
$$\proba\!\left[|\eps_1(\gamma)| \geq \frac{2u}{\sqrt{n}} \right] \leq 4\,\E^{-\frac{u^2}{2}}.$$
Setting $u = \frac{x}{2}\sqrt{\log n}$ yields the claimed result.
\end{proof}
\medskip

\subsection{Characteristic polynomials of the adjacency matrices} \label{sub:hilbert_schmidt}
The following result explains why we are interested in the spectral properties of the $W$-random graphs attached to singular graphons:

\begin{theorem}\label{thm:spectral_constant_graphon}
Let $\gamma$ be a graphon with edge density $p = t(\edgegraph,\gamma)$ such that we have the convergence in probability
$$\det\!\left(I_n+z\,\frac{A_n(\gamma)}{n}\right) \to_{\proba,\,n \to \infty} (1+pz)\,\E^{-pz-\frac{p(1-p)z^2}{2}},$$
or the convergence in expectation
$$\esper\!\left[\det\!\left(I_n+z\,\frac{A_n(\gamma)}{n}\right)\right] \to_{n \to \infty} (1+pz)\,\E^{-pz-\frac{p(1-p)z^2}{2}}.$$
Then, $\gamma=\gamma_p$.
\end{theorem}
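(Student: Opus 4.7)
The plan is to compute, for an arbitrary graphon $\gamma$, the limit of $\det(I_n+zA_n(\gamma)/n)$ as $n\to\infty$, and then exploit the rigid form assumed in the hypothesis to force $\gamma=\gamma_p$. I would start from the expansion
\[
\log\det\!\left(I_n + z\,\frac{A_n(\gamma)}{n}\right) = \sum_{r\geq 1} \frac{(-1)^{r+1}\,z^r}{r}\,\frac{\tr(A_n(\gamma)^r)}{n^r},
\]
valid for $|z|<1$ since $\|A_n(\gamma)/n\|_{\mathrm{op}}<1$. The $r=1$ term vanishes because $A_n(\gamma)$ has zero diagonal; the $r=2$ term equals $2|E(G_n(\gamma))|/n^2$ and converges (in probability and in expectation) to the edge density $p$; and for $r\geq 3$ the trace counts closed walks of length $r$, so that $\tr(A_n(\gamma)^r)/n^r = t(C_r,G_n(\gamma)) \to t(C_r,\gamma)=\tr(T_g^r)=\sum_k \mu_k^r$, where $T_g$ denotes the integral operator on $\leb^2([0,1])$ associated to any representative $g$ of $\gamma$, and $(\mu_k)$ are its eigenvalues.

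To exchange limit and infinite sum I would use the bound $\sum_i |\lambda_i(M)|^r \leq \|M\|_{\mathrm{op}}^{r-2}\,\tr(M^2)$, valid for a real symmetric matrix $M$ and $r\geq 2$. Applied to $M=A_n(\gamma)/n$, with the uniform estimates $\|A_n(\gamma)/n\|_{\mathrm{op}}\leq (n-1)/n$ and $\tr((A_n(\gamma)/n)^2)\leq 1$, this yields absolute convergence of the series uniformly in $n$ on the disk $|z|<1$. A truncation argument (approximating the full series by a fixed partial sum of order $R$) then gives the in-probability convergence
\[
\det\!\left(I_n + z\,\frac{A_n(\gamma)}{n}\right) \longrightarrow \fredholm(I+zT_g)\,\E^{\frac{z^2}{2}(\|g\|_2^2 - p)},
\]
where $\fredholm(I+zT_g)=\prod_k (1+z\mu_k)\,\E^{-z\mu_k}$ is the Hilbert--Schmidt regularised determinant. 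The same uniform estimate gives $|\det(I_n + zA_n(\gamma)/n)|\leq \E^{-|z|}/(1-|z|)$ for $|z|<1$, so dominated convergence transfers the above to convergence in expectation. Under either hypothesis of the theorem, the limit must equal $(1+pz)\E^{-pz-p(1-p)z^2/2}$, whence
\[
\fredholm(I+zT_g) = (1+pz)\,\E^{-pz}\,\E^{-\frac{z^2}{2}(\|g\|_2^2 - p^2)}.
\]

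I would then read off the spectral structure of $T_g$ from this identity. The left-hand side is an entire function whose zeros are precisely the points $-1/\mu_k$, with multiplicities, for each non-zero eigenvalue of $T_g$; the right-hand side vanishes only at $-1/p$, with multiplicity one. Hence every non-zero eigenvalue of $T_g$ equals $p$; denoting $m$ its multiplicity, the residual identity $(1+pz)^{m-1}\E^{-(m-1)pz} = \E^{-\frac{z^2}{2}(\|g\|_2^2 - p^2)}$ forces $m=1$ by inspection of the $z^3$-coefficient (using $p>0$), and then matching the $z^2$-coefficient yields $\|g\|_2^2 = p^2$. Consequently $T_g$ has rank one with non-zero eigenvalue $p$, so $g(x,y)=p\,f(x)f(y)$ almost everywhere for some unit vector $f\in\leb^2([0,1])$. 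The edge-density constraint $\iint g = p$ gives $(\int f)^2=1$, which saturates the Cauchy--Schwarz inequality $(\int f)^2\leq\|f\|_2^2=1$; therefore $f$ is constant almost everywhere and $g\equiv p$.

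I expect the main obstacle to be the justification of the exchange between limit and infinite sum in the trace expansion; the elementary spectral bound above is sufficient on the disk $|z|<1$, and by analyticity of both sides of the final identity this is enough to determine the whole spectral structure of $T_g$. The remaining steps---the zero-set analysis of the Weierstrass product and the Cauchy--Schwarz conclusion---are elementary once the limiting identity has been established.
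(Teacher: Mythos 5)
Your proof is correct and follows essentially the same route as the paper: expand $\log\det(I_n+z\,A_n(\gamma)/n)$ in cycle densities, pass to the limit (with a uniform tail bound and a truncation argument) to obtain a modified Fredholm determinant of $T_g$, and then identify the spectrum of $T_g$ before concluding with Cauchy--Schwarz. The only genuine variation is in the endgame: the paper extracts the power sums $\sum_i (\lambda_i)^k=p^k$ for $k\geq 3$ and compares the cases $k=4$ and $k=6$ to isolate a single non-zero eigenvalue equal to $p$, whereas you compare the zero sets and Taylor coefficients of the two entire functions in your final identity; both arguments are valid and the concluding step is identical.
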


The end of this section is devoted to a proof of this criterion, which relies on general considerations from operator theory. In the following, the $\leb^2$ spaces that we consider are all taken with respect to the Lebesgue measure. Given a graph function $g$, we can associate to it an integral operator on functions $f : [0,1] \to \R$:
$$
(T_g f)(x) = \int_{0}^1 g(x,y)\,f(y)\DD{y}.
$$
If $f \in \leb^2([0,1])$, then $T_gf$ is also in $\leb^2([0,1])$, since
\begin{align*}
\|T_gf\|^2 &= \int_{[0,1]} (T_gf(x))^2\DD{x} = \int_{[0,1]^3} g(x,y)\,g(x,z)\,f(x)\,f(z)\DD{x}\DD{y}\DD{z} \\
&\leq \int_{[0,1]^3} g(x,y)\,g(x,z)\, \frac{(f(x))^2 + (f(z))^2}{2}  \DD{x}\DD{y}\DD{z} \\
&\leq \int_{[0,1]^3} \frac{(f(x))^2 + (f(z))^2}{2}  \DD{x}\DD{y}\DD{z} = \|f\|^2.
\end{align*}
Moreover, since $g \in \leb^2([0,1]^2)$, $T_g$ is actually a Hilbert--Schmidt operator; see for instance \cite{BS05}*{Theorem 2.11}. Hence, there exists a countable family of non-zero eigenvalues $(\lambda_i)_{i \in I}$, and a corresponding orthonormal family $(f_i)_{i \in I}$ of eigenfunctions in $\leb^2([0,1])$, such that
$$T_g = \sum_{i\in I} \lambda_i \,|f_i \rangle \,\langle f_i|.$$
Equivalently, the graph function $g$ is the limit in $\leb^2([0,1]^2)$ of the series $\sum_{i \in I} \lambda_i\,f_i(x)\,f_i(y)$, with $\sum_{i \in I} (\lambda_i)^2 < +\infty$ and $\scal{f_i}{f_j}_{\leb^2([0,1])} = 1_{(i=j)}$. For the general theory of Hilbert--Schmidt integral operators, see for instance \cite{Lax02}*{Chapter 30}.
Though this is not entirely evident, the discrete spectrum $\Spec(T_g) = \{\lambda_i\}_{i \in I}$ only depends on the graphon $\gamma=[g]$:

\begin{lemma}
Let $\gamma \in \Gspa$. The spectrum of an operator $T_g$ with $g \in \gamma$ only depends on $\gamma$.
\end{lemma}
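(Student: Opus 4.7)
The plan is to exhibit the multiset of non-zero eigenvalues of $T_g$ as a function of certain observables of $\gamma$---namely the cycle densities $t(C_k,\gamma)$ for $k\geq 3$---and then to argue that this multiset is determined by these (manifestly graphon-invariant) quantities.

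First I would use that $g \in \leb^\infty([0,1]^2) \subset \leb^2([0,1]^2)$ to conclude that $T_g$ is a compact self-adjoint Hilbert--Schmidt operator, with non-zero eigenvalues $(\lambda_i)_{i\in I}$ all contained in $[-1,1]$ (since $\|T_g\|_{\mathrm{op}} \leq \|g\|_\infty \leq 1$) and satisfying $\sum_{i\in I}(\lambda_i)^2 <+\infty$. The operators $T_g^k$ for $k\geq 2$ are then trace-class, and Fubini yields the key identity
\begin{equation*}
\sum_{i\in I}(\lambda_i)^k \,=\, \tr(T_g^k) \,=\, \int_{[0,1]^k} g(x_1,x_2)\,g(x_2,x_3)\cdots g(x_k,x_1)\DD{x_1}\cdots \DD{x_k}.
\end{equation*}
For every $k\geq 3$, the right-hand integral is precisely the cycle density $t(C_k,\gamma)$, which depends only on the class $\gamma=[g]$. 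Hence the power sums $p_k := \sum_{i\in I}(\lambda_i)^k$ are graphon invariants for all $k\geq 3$.

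It then remains to show that the multiset $\{\lambda_i\}_{i\in I}$ is determined by the sequence $(p_k)_{k\geq 3}$, which I would establish via a moment argument. Let $\mu = \sum_{i\in I}\delta_{\lambda_i}$ be the spectral counting measure; it is a $\sigma$-finite positive measure supported in $[-1,1]\setminus\{0\}$ with $\int \lambda^2 \DD{\mu}(\lambda) <+\infty$, so the auxiliary measure $\nu(\DD{\lambda}) = \lambda^2\,\mu(\DD{\lambda})$ is a finite Borel measure on $[-1,1]$ whose moment of order $k\geq 1$ equals $p_{k+2}$. The main obstacle is that the zeroth moment $\nu([-1,1]) = p_2 = \|g\|_2^2$ is \emph{not a priori} a graphon invariant, since it is not the density of any simple graph. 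I would circumvent this by observing that two finite Borel measures on the compact interval $[-1,1]$ which share all their moments of order $\geq 1$ can only differ by a multiple of $\delta_0$: their difference annihilates every continuous function vanishing at $0$, by Weierstrass approximation of such functions by polynomials without constant term. Since $\nu$ carries no mass at $\{0\}$ by construction, it is uniquely determined by $(p_k)_{k\geq 3}$, and so is $\mu$. Therefore the spectrum of $T_g$ depends only on the graphon $\gamma$.
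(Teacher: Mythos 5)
Your proof is correct and follows essentially the same route as the paper: both arguments reduce the claim to the identity $\tr((T_g)^k)=t(C_k,\gamma)$ for $k\geq 3$ (a graphon invariant) and then to the fact that the power sums of order $\geq 3$ of a square-summable sequence determine its non-zero terms. The only difference is that where the paper cites a reference for this last step, you give a clean self-contained argument via the finite measure $\nu(\DD{\lambda})=\lambda^2\,\mu(\DD{\lambda})$ and Weierstrass approximation by polynomials vanishing at $0$, which neatly circumvents the fact that $p_2=\|g\|_2^2$ is not \emph{a priori} an invariant.
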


\begin{proof}
Note that if $g' = g^\sigma$ is the conjugate of a graph function by a Lebesgue isomorphism $\sigma$, then the corresponding integral operators $T_{g'}$ and $T_g$ are conjugated by the relation $T_g' = U_\sigma\,T_g \,U_{\sigma^{-1}}$, where $U_\sigma$ is the isometry of $\leb^2([0,1])$ defined by $f \mapsto f\circ \sigma$. Therefore, $\Spec(T_{g'}) = \Spec(T_g)$. However, this is not sufficient, because of the following subtlety: two graph functions $g$ and $g'$ in the same equivalence class $\gamma \in \Gspa$ are not necessarily conjugated by a Lebesgue isomorphism. Instead, there exists a sequence of Lebesgue isomorphisms $(\sigma_n)_{n \in \N}$ such that $g^{\sigma_n} \to_{n \to \infty,\,\oblong} g'$. Let us see why this still implies that $\Spec(T_g)=\Spec(T_{g'})$. For $k \geq 3$, denote $C_k$ the cycle on $k$ vertices. By \cite{Lov12}*{Lemma 10.22}, $|t(C_k,g^{\sigma_n})-t(C_k,g')| \leq k\,\|g^{\sigma_n}-g\|_\oblong$. However, if $g(x,y) = \sum_{i \in I} \lambda_i\,f_i(x)\,f_i(y)$, then
\begin{align*}
t(C_k,g) &=  \sum_{i_1,\ldots,i_k \in I} \lambda_{i_1}\cdots \lambda_{i_k} \int_{[0,1]^k} f_{i_1}(x_1)\,f_{i_1}(x_2)\,f_{i_2}(x_2)\,f_{i_2}(x_3) \cdots f_{i_k}(x_k)\,f_{i_k}(x_1) \DD{x_1}\cdots \DD{x_k} \\
&= \sum_{i \in I} (\lambda_i)^k = \tr((T_g)^k);
\end{align*}
see \cite{Lov12}*{Equation (7.23)}. As a consequence, the two spectra $\Spec(T_g)=\Spec(T_{g^{\sigma_n}}) = \{\lambda_i\}_{i \in I}$ and $\Spec(T_{g'}) = \{\lambda_j'\}_{j \in J}$ satisfy:
$$\sum_{i \in I}(\lambda_i)^k = \sum_{j \in J} (\lambda_j')^k\quad \text{for any } k \geq 3.$$ 
These identities imply that the spectra of $T_g$ and $T_{g'}$ are the same; see for instance \cite{Lov12}*{Proposition A.21}.
\end{proof}
\medskip

A general principle from random matrix theory is that the scaled random matrices $\frac{A_n(\gamma)}{n}$ associated to the $W$-random graphs $G_n(\gamma)$ should be good (random) approximations of the infinite-dimensional operator $T_g$; see for instance \cite{GK00}. In particular, one could expect that the characteristic polynomial of $\frac{A_n(\gamma)}{n}$ converges towards the \emph{Fredholm determinant}
$$\det(\id + zT_g) = \prod_{i \in I} (1+z\lambda_i)$$
of the graphon $\gamma$. However, there are complications related to the fact that the Fredholm determinant makes sense only if $T_g$ is a trace-class operator. Unfortunately, in general we cannot make this assumption: given a graphon $\gamma=[g]$, we only know \emph{a priori} that $T_g$ is Hilbert--Schmidt. A way to overcome this problem is to consider the modified Fredholm determinants
\begin{align*}
\fredholm(\gamma,z) &= \fredholm(\id + zT_g) = \prod_{i\in I} (1+z\lambda_i)\,\E^{-z\lambda_i} ;\\
\ffredholm(\gamma,z) &= \ffredholm(\id + zT_g) = \prod_{i\in I} (1+z\lambda_i)\,\E^{-z\lambda_i+\frac{z^2(\lambda_i)^2}{2}} .
\end{align*}
These infinite products converge for any Hilbert--Schmidt operator, and we have
\begin{align*}
\fredholm(\id+zT_g) &= \det(\id + zT_g)\,\E^{-z\,\tr(T_g)};\\
\ffredholm(\id+zT_g) &= \det(\id+zT_g)\,\E^{-z\,\tr(T_g) + \frac{z^2\,\tr((T_g)^2)}{2}}
\end{align*}
if $T_g$ is trace-class. This idea of compensation goes back to Hilbert, and it enables certain renormalisation procedures in quantum field theory; see \cite{BS05}*{Chapter 9} for an introduction to these techniques.
The modified Fredholm determinants are entire functions on the complex plane; in the following we shall also consider them as holomorphic functions on the open unit disc $\Disk = \{z \in \C\,|\,|z|<1\}$. They are related to the observables of the graphon $\gamma$ by the equation
\begin{align*}
\ffredholm(\gamma,z) &= \exp\left(\sum_{k \geq 3} \frac{(-1)^{k-1}\,z^k}{k}\,\tr((T_g)^k)\right) =\exp\left(\sum_{k \geq 3} \frac{(-1)^{k-1}\,z^k}{k}\,t(\gamma,C_k)\right).
\end{align*}
The ratio $\det_2(\gamma,z)/\det_3(\gamma,z)$ is equal to $\exp(-\frac{z^2}{2}\,\tr((T_g)^2))$ for any representative graph function $g$ of $\gamma$. The trace of $(T_g)^2$ is 
$$\tr((T_g)^2)=\int_{[0,1]^2}(g(x,y))^2\DD{x}\DD{y} = \sum_{i \in I}(\lambda_i)^2.$$ 
In the sequel we denote this quantity $\|\gamma\|^2$, as it only depends on the graphon $\gamma=[g]$. Intuitively, $\|\gamma\|^2$ is the density of the graph $C_2 =\,\begin{tikzpicture}[baseline=-1mm,scale=0.5]
\foreach \x in {(0,0),(1,0)}
\fill \x circle (4pt);
\draw (0,0) .. controls (0.5,0.3) .. (1,0);
\draw (0,0) .. controls (0.5,-0.3) .. (1,0);
\end{tikzpicture}$ in $\gamma$, but since we consider graphs without multiple edges this does not really make sense.
\medskip

We are now ready to prove our criterion for constant graphons. We start with the following general results of convergence of the characteristic polynomials of the adjacency matrices $A_n(\gamma)$ (Proposition \ref{prop:convergence_ffredholm} and Corollary \ref{cor:convergence_fredholm}):

\begin{proposition}\label{prop:convergence_ffredholm}
Let $\gamma=[g] \in \Gspa$ be an arbitrary graphon, and $(G_n(\gamma))_{n \in \N}$ be the associated $W$-random graphs. We denote $\Gamma_n(\gamma)=\gamma_{G_n(\gamma)}$ the graphon associated to $G_n(\gamma)$, and $A_n(\gamma)$ the adjacency matrix of the graph $G_n(\gamma)$. We have the convergence in probability
$$\ffredholm(\Gamma_n(\gamma),z) = \det\!\left(I_n + z\,\frac{A_n(\gamma)}{n}\right)\,\exp\left(\frac{z^2}{2}\,t(\edgegraph,\Gamma_n(\gamma))\right) \to_{\proba,\,n \to \infty} \ffredholm(\gamma,z),$$
the topology being for instance the Montel topology of locally uniform convergence of holomorphic functions on the unit disc $\Disk$.
\end{proposition}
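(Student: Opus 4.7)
The plan is to combine two ingredients: an algebraic identity identifying $\ffredholm(\Gamma_n(\gamma),z)$ with an exponential series in the cycle densities $t(C_k,\Gamma_n(\gamma))$ for $k\geq 3$, and the known convergence $t(C_k,G_n(\gamma))\to t(C_k,\gamma)$ in probability for each fixed $k$. The passage from pointwise-in-$k$ convergence to uniform-in-$z$ convergence on compact subsets of $\Disk$ is then obtained by truncation, using a Hilbert--Schmidt bound to control the tails uniformly in $n$.

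For the identity, every row of $A_n(\gamma)$ has sum at most $n-1$, so the eigenvalues $\mu_1^{(n)},\ldots,\mu_n^{(n)}$ of $A_n(\gamma)/n$ satisfy $|\mu_j^{(n)}|\leq 1-\tfrac{1}{n}$. Expanding $\log\det$ on $\Disk$ gives
$$\log\det\!\left(I_n+z\,\frac{A_n(\gamma)}{n}\right)=\sum_{k\geq 1}\frac{(-1)^{k-1}}{k}\,z^k\,\tr\!\left(\!\left(\frac{A_n(\gamma)}{n}\right)^{\!k}\right).$$
The $k=1$ term vanishes because $A_n(\gamma)$ has no loops, and expanding the trace as a sum over closed walks of length $k$ shows that $\tr((A_n(\gamma)/n)^k)=t(C_k,\Gamma_n(\gamma))$ for $k\geq 3$, while the $k=2$ term equals $t(\edgegraph,\Gamma_n(\gamma))$ (a closed walk of length $2$ corresponds to traversing an edge twice). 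Isolating the $k=2$ term and exponentiating yields the identity in the statement, and in particular $\ffredholm(\Gamma_n(\gamma),z)=\exp(L_n(z))$ with $L_n(z)=\sum_{k\geq 3}\frac{(-1)^{k-1}}{k}z^k\,t(C_k,\Gamma_n(\gamma))$; the same formula with $t(C_k,\gamma)$ defines $L(z)$, so that $\ffredholm(\gamma,z)=\exp(L(z))$.

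The crucial tail estimate is that for every $k\geq 2$, $|t(C_k,\gamma)|\leq \|\gamma\|^2\leq 1$ and $|t(C_k,\Gamma_n(\gamma))|\leq t(\edgegraph,\Gamma_n(\gamma))\leq 1$. The first inequality follows from $t(C_k,\gamma)=\sum_i\lambda_i^k$ together with $|\lambda_i|\leq \|T_g\|_{\mathrm{op}}\leq 1$, which gives $|\lambda_i|^k\leq\lambda_i^2$ and hence $\sum_i|\lambda_i|^k\leq\|\gamma\|^2$; the second is the analogous statement for the eigenvalues $\mu_j^{(n)}$. Now fix $r<1$ and $\eps>0$: choose $K$ with $2\sum_{k>K}r^k/k<\eps/2$, so that the tails of $L_n$ and $L$ beyond index $K$ are bounded uniformly in $n$ by $\eps/4$ on $\{|z|\leq r\}$. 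For the partial sums up to $K$, the convergence of homomorphism densities recalled in Section~1.2 gives $t(C_k,G_n(\gamma))\to t(C_k,\gamma)$ in probability for each $k\in\{3,\ldots,K\}$, and a union bound yields $\proba[\sup_{|z|\leq r}|L_n(z)-L(z)|>\eps]\to 0$.

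This establishes locally uniform convergence in probability of $L_n$ to $L$ on $\Disk$. Since the Montel topology on the space of holomorphic functions on $\Disk$ is metrizable, and since composition with $\exp$ is continuous in this topology, the same convergence holds for $\ffredholm(\Gamma_n(\gamma),\cdot)=\exp(L_n)$ and $\ffredholm(\gamma,\cdot)=\exp(L)$, which is the content of the proposition. The argument is structurally routine; the substantive point is that the compensation by $\exp(z^2\,t(\edgegraph,\Gamma_n(\gamma))/2)$ removes precisely the $k=2$ contribution that is not absolutely summable in general, after which the Hilbert--Schmidt bound $\sum_i\lambda_i^2=\|\gamma\|^2<+\infty$ is exactly enough to control the tail uniformly.
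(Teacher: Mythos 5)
Your proof is correct and follows essentially the same route as the paper's: both rest on the identity $\log\ffredholm(\Gamma_n(\gamma),z)=\sum_{k\geq 3}\frac{(-1)^{k-1}}{k}\,z^k\,t(C_k,\Gamma_n(\gamma))$ (obtained in the paper by identifying $T_{g_{G_n(\gamma)}}$ as the finite-rank operator with matrix $\frac{A_n(\gamma)}{n}$, in your write-up by expanding $\log\det$ into traces of powers — the two are the same computation) together with the convergence in probability of the cycle densities. The only, harmless, difference lies in the tail control: the paper sums quantitative Chebyshev bounds from \cite{LS06}*{Lemma 2.4} over all $k$ simultaneously, which yields an explicit rate $O_\eta(n^{-1/4})$ on an event of probability $1-O(n^{-1/2})$, whereas your truncation at a fixed $K$ combined with the uniform bound $|t(C_k,\cdot)|\leq 1$ gives the same qualitative conclusion without a rate.
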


This proposition is connected to the convergence of the scaled eigenvalues of the random graph $G_n(\gamma)$ towards the eigenvalues of the integral operator $T_g$. We refer in particular to \cite{Lov12}*{Theorem 11.54} and \cite{BCLSV12}*{Theorem 6.7}, and we recall that $G_n(\gamma) \to_{\proba,\,n \to \infty} \gamma$ in the space of graphons $\Gspa$. Since we are interested in the (modified) Fredholm determinants, we need to be careful with the infinite sums or products over eigenvalues of the relevant Hilbert--Schmidt operators; the proof below make these manipulations rigorous.

\begin{proof}
For a graph $G$ on $n$ vertices, we have
$$\|[\gamma_G]\|^2 = \int_{[0,1]^2} (g_G(x,y))^2\DD{x}\DD{y} = \int_{[0,1]^2} g_G(x,y)\DD{x}\DD{y} = t(\edgegraph,G).$$
On the other hand, the operator associated to the graph function of $G$ is
$$(T_{g_G}f)(x) = \sum_{1\leq i,j\leq n} 1_{(i\sim_G j)}\,1_{(\frac{i-1}{n}\leq x < \frac{i}{n})}\,\int_{\frac{j-1}{n}}^{\frac{j}{n}} f(y)\DD{y} = \sum_{1\leq i,j\leq n} \frac{A_{ij}}{n} \,e_i(x)\,\scal{f}{e_j},$$
where $A_G=(A_{ij})_{1\leq i,j \leq n}$ is the adjacency matrix of $G$, and $(e_i)_{1\leq i\leq n}$ is the orthonormal family in $\leb^2([0,1])$ defined by $e_i(x) = \sqrt{n}\,1_{\frac{i-1}{n}\leq x <i}$. In other words,
$$T_{g_G} = \sum_{1\leq i,j\leq n} | e_i \rangle\, \frac{A_{ij}}{n} \,\langle e_j |.$$
So, the (random) finite-rank operator $T_{g_{G_n(\gamma)}}$ has for Fredholm determinant $\det(I_n + z\,\frac{A_n(\gamma)}{n})$. Then, to get the $3$-modified Fredholm determinant, we divide by the exponentials of:
\begin{itemize}
    \item $z\,\tr(T_{g_{G_n(\gamma)}}) = 0$, as the adjacency matrix $A_n(\gamma)$ has zeroes on the diagonal;
    \item $-\frac{z^2}{2}\,\|\gamma_{G_n(\gamma)}\|^2 = -\frac{z^2}{2}\,t(\edgegraph,G_n(\gamma))$.
\end{itemize}
So, $\det_3(\Gamma_n(\gamma),z) = \det(I_n + z\,\frac{A_n(\gamma)}{n})\,\exp(\frac{z^2}{2}\,t(\edgegraph,\Gamma_n(\gamma)))$. Let us now prove the convergence in probability. For any $k \geq 3$,
\begin{align*}
\esper[t(C_k,G_n(\gamma))] &= t(C_k,\gamma) + O\!\left(\frac{k^2}{n}\right);\\
\var(t(C_k,G_n(\gamma))) &=O\!\left(\frac{k^2}{n}\right),
\end{align*}
see \cite{LS06}*{Lemma 2.4}. Denote $A_1$ and $A_2$ the implied constants in the $O(\cdot)$'s above. By the Bienaymé--Chebyshev inequality,
$$\proba\!\left[\big|t(C_k,G_n(\gamma)) - \esper[t(C_k,G_n(\gamma))]\big| \geq \frac{k^2}{n^{1/4}}\right] \leq \frac{A_2}{n^{1/2}k^2}.$$
Therefore, with probability larger than $1-\frac{A_2\pi^2}{6n^{1/2}}$, on the disk $\Disk(\eta) = \{z \in \C\,|\,|z|<\eta\}$, we have
\begin{align*}
\left|\sum_{k=3}^{\infty} \frac{(-1)^{k-1}}{k}\,t(C_k,G_n(\gamma)) \,z^k - \sum_{k=3}^{\infty} \frac{(-1)^{k-1}}{k}\,\esper[t(C_k,G_n(\gamma))] \,z^k\right| \leq \frac{1}{n^{1/4}}\sum_{k=3}^\infty k\,\eta^k = O_\eta(n^{-1/4}).
\end{align*}
We also have
$$\left|\sum_{k=3}^{\infty} \frac{(-1)^{k-1}}{k}\,\esper[t(C_k,G_n(\gamma))] \,z^k - \sum_{k=3}^{\infty} \frac{(-1)^{k-1}}{k}\,t(C_k,\gamma) \,z^k\right| \leq \frac{A_1}{n}\sum_{k=3}^\infty k\,\eta^k = O_{\eta}(n^{-1}).$$
Taking the sum of these upper bounds and exponentiating, we obtain the estimate
$$\frac{\det_3(\Gamma_n(\gamma),z)}{\det_3(\gamma,z)} = \exp(O_\eta(n^{-1/4}))\quad \text{uniformly on }\Disk(\eta).$$
This is true with probability $1-O(n^{-1/2})$, whence the convergence in probability.
\end{proof}
\medskip

\begin{corollary}\label{cor:convergence_fredholm}
For any graphon $\gamma \in \Gspa$, we have the convergence in probability
$$\det\!\left(I_n + z\,\frac{A_n(\gamma)}{n}\right) \to_{n \to \infty} \exp\left(-\frac{z^2}{2}\,t(\edgegraph,\gamma)\right)\ffredholm(\gamma,z).$$
We also have convergence of the expectations $\esper[\det(I_n + z\,\frac{A_n(\gamma)}{n})]$ towards the same limit locally uniformly on the open unit disc $\Disk$.
\end{corollary}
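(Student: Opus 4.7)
The plan is to derive this corollary directly from Proposition \ref{prop:convergence_ffredholm} by rearranging the identity and applying convergence properties already established in the paper. Starting from the proposition, we have the algebraic identity
$$\det\!\left(I_n + z\,\frac{A_n(\gamma)}{n}\right) = \ffredholm(\Gamma_n(\gamma),z)\,\exp\!\left(-\frac{z^2}{2}\,t(\edgegraph,\Gamma_n(\gamma))\right).$$
The first factor converges in probability locally uniformly on $\Disk$ to $\ffredholm(\gamma,z)$ by Proposition \ref{prop:convergence_ffredholm}. For the second factor, the random edge density $t(\edgegraph,G_n(\gamma)) = t(\edgegraph,\Gamma_n(\gamma))$ converges in probability to $t(\edgegraph,\gamma)$ — this is a special case of the convergence of observables $t(F,G_n(\gamma))\to_\proba t(F,\gamma)$ discussed after the definition of the $W$-random graphs, and it implies that $\exp(-\frac{z^2}{2}\,t(\edgegraph,\Gamma_n(\gamma)))$ converges in probability, locally uniformly on $\C$, to $\exp(-\frac{z^2}{2}\,t(\edgegraph,\gamma))$. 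Multiplying these two convergences yields the first assertion.

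For the convergence in expectation, the key is to obtain a deterministic locally uniform bound on $\det(I_n+z\,\frac{A_n(\gamma)}{n})$ on $\Disk$, after which one can invoke dominated convergence. Let $\mu_1,\ldots,\mu_n$ denote the eigenvalues of $A_n(\gamma)/n$; since $A_n(\gamma)$ is a symmetric $\{0,1\}$-matrix with zero diagonal, we have the deterministic bound
$$\sum_{i=1}^n (\mu_i)^2 = \tr\!\left(\Big(\frac{A_n(\gamma)}{n}\Big)^2\right) = \frac{2\,|E_{G_n(\gamma)}|}{n^2} \leq 1,$$
and in particular $|\mu_i|\leq 1$ for every $i$. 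Fix $\eta\in(0,1)$. Using the standard Taylor estimate $|\log(1+w)-w+\frac{w^2}{2}|\leq C_\eta |w|^3$ for $|w|\leq \eta$, we obtain on $|z|\leq \eta$
$$\left|\log \ffredholm(\Gamma_n(\gamma),z)\right| \leq C_\eta |z|^3 \sum_{i=1}^n |\mu_i|^3 \leq C_\eta\,\eta^3 \sum_{i=1}^n (\mu_i)^2 \leq C_\eta\,\eta^3,$$
so that $|\ffredholm(\Gamma_n(\gamma),z)|\leq \E^{C_\eta \eta^3}$. Combined with the trivial bound $|\exp(-\frac{z^2}{2}t(\edgegraph,\Gamma_n(\gamma)))|\leq \E^{\eta^2/2}$, this yields a deterministic constant $M_\eta$ such that $|\det(I_n+z\,A_n(\gamma)/n)|\leq M_\eta$ for all $|z|\leq \eta$ and all $n$.

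Given this uniform bound, the first part of the corollary (convergence in probability of holomorphic functions locally uniformly on $\Disk$) upgrades to convergence in expectation for each fixed $z\in \Disk$ by the bounded convergence theorem. Local uniform convergence of the expectations on $\Disk$ then follows from the Cauchy integral formula applied on circles contained in $\Disk$: the expectations form a uniformly bounded family of holomorphic functions, pointwise convergent, and hence convergent locally uniformly by Vitali's convergence theorem. The main subtlety in this plan is precisely the need for the deterministic bound, since the naive estimate $|\det(I_n+zA_n/n)|\leq (1+|z|)^n$ blows up with $n$; it is exactly the compensation built into $\ffredholm$ together with the Frobenius norm control $\sum (\mu_i)^2\leq 1$ that makes the argument work on the whole open unit disk.
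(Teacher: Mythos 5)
Your proof is correct and follows essentially the same route as the paper: the first assertion by combining Proposition \ref{prop:convergence_ffredholm} with the convergence in probability of the edge density, and the second by establishing a locally uniform deterministic bound on the determinants over $\Disk(\eta)$ and then passing to the limit under the expectation (with Vitali/Montel giving local uniformity). The only cosmetic difference is that the paper derives the uniform bound from the inequalities $|t(C_k,\cdot)|\leq 1$ on the cycle observables, whereas you use the equivalent Frobenius-norm estimate $\sum_i (\mu_i)^2 \leq 1$ together with a Taylor bound on the compensated logarithm.
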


\begin{proof}
The first part of the corollary follows immediately from Proposition \ref{prop:convergence_ffredholm} and from the convergence in probability
$$t( \edgegraph,G_n(\gamma)) \to_{\proba,\,n\to \infty} t(\edgegraph,\gamma).$$
To get the convergence of the expectations, it suffices to have a uniform bound on the determinants $\det(I_n + z\,\frac{A_n(\gamma)}{n})$ for $z \in \Disk(\eta)$ and $\eta \in (0,1)$. However, since the cycle observables $t(C_k,\cdot)$ are bounded by $1$,
$$\Re \left(\log \ffredholm(\Gamma_n(\gamma),z)\right) \leq \sum_{k=3}^\infty \frac{\eta^k}{k} = O_\eta(1)$$
uniformly for $z \in \Disk(\eta)$, and similarly $\frac{|z|^2}{2}\,t(\edgegraph,G_n(\gamma)) = O_\eta(1)$, so such a uniform bound indeed exists.
\end{proof}
\medskip

We now identify the constant graphons by means of their $3$-modified Fredholm determinant.

\begin{proposition}\label{prop:ffredholm_constant_p}
Consider a graphon $\gamma \in \Gspa$ and a parameter $p \in [0,1]$. The following assertions are equivalent:
\begin{enumerate}
    \item The graphon $\gamma$ is the class $\gamma_p$ of the constant graph function $g(x,y)=p$.
    \item The graphon $\gamma$ has edge density $p$ and $3$-modified Fredholm determinant
    $$\ffredholm(\gamma,z) = (1+pz)\,\E^{-pz+\frac{p^2z^2}{2}}.$$
\end{enumerate}
\end{proposition}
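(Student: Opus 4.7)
The strategy is to prove (1) $\Rightarrow$ (2) by direct computation on the rank-one operator attached to the constant graphon, and to prove (2) $\Rightarrow$ (1) by reading off the cycle densities $t(C_k,\gamma)$ from the Taylor coefficients of $\log\ffredholm(\gamma,z)$ and then invoking the Chung--Graham--Wilson theorem recalled in the introduction.

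For (1) $\Rightarrow$ (2), fix the representative $g(x,y)=p$. The associated integral operator is
$$T_g f = p \int_0^1 f(y)\DD{y} = p\,\scal{f}{\mathbf{1}}\,\mathbf{1},$$
where $\mathbf{1}$ is the constant function equal to $1$ on $[0,1]$, which is a unit vector of $\leb^2([0,1])$. Hence $T_g$ has rank one, its unique non-zero eigenvalue is $p$, and the product formula defining $\ffredholm$ immediately gives
$$\ffredholm(\gamma_p,z) = (1+pz)\,\E^{-pz+\frac{p^2z^2}{2}}.$$
The edge density of $\gamma_p$ is trivially $p$.

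For (2) $\Rightarrow$ (1), we use the identity established in the excerpt, namely
$$\log \ffredholm(\gamma,z) = \sum_{k=3}^\infty \frac{(-1)^{k-1}}{k}\,t(C_k,\gamma)\,z^k,$$
which is valid as holomorphic functions on $\Disk$ (the series converges there since $|t(C_k,\gamma)|\leq 1$). By the hypothesis on $\ffredholm(\gamma,z)$, this must also equal
$$\log\!\left((1+pz)\,\E^{-pz+\frac{p^2z^2}{2}}\right) = \log(1+pz) - pz + \frac{p^2z^2}{2} = \sum_{k=3}^\infty \frac{(-1)^{k-1}\,p^k}{k}\,z^k,$$
the cancellation of the first two Taylor coefficients of $\log(1+pz)$ accounting for the correction terms $-pz+\frac{p^2z^2}{2}$. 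Identifying coefficients of $z^k$ for each $k\geq 3$ yields $t(C_k,\gamma)=p^k$ for every $k\geq 3$.

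Specialising to $k=4$, and noting that the cycle $C_4$ is the same (unlabelled) graph as the square, we obtain
$$t(\begin{tikzpicture}[baseline=0.5mm,scale=0.3]
\foreach \x in {(0,0),(0,1),(1,0),(1,1)}
\fill \x circle (5pt);
\draw (0,0) rectangle (1,1);
\end{tikzpicture},\,\gamma) = p^4 = (t(\edgegraph,\gamma))^4,$$
and the Chung--Graham--Wilson characterisation recalled in the introduction forces $\gamma=\gamma_p$. The only conceptual step is the observation that $\ffredholm$, despite being a rather abstract spectral invariant of $\gamma$, encodes exactly the family of cycle densities $\{t(C_k,\gamma)\}_{k\geq 3}$; no genuine obstacle arises, since the matching of $t(C_4,\gamma)$ with $p^4$ is all one needs in order to apply Chung--Graham--Wilson.
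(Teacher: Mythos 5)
Your proof is correct, and the forward implication is identical to the paper's. For the converse, however, you take a genuinely different route. The paper never invokes Chung--Graham--Wilson here: from the hypothesis it reads off $\tr((T_g)^k)=\sum_{i\in I}(\lambda_i)^k=p^k$ for $k\geq 3$ and then runs a self-contained spectral argument --- the case $k=4$ forces $|\lambda_i|\leq p$ for all $i$, a comparison of the $k=4$ and $k=6$ identities rules out a second non-zero eigenvalue, the case $k=3$ fixes the sign, so $T_g$ is rank one with kernel $g(x,y)=p\,f(x)f(y)$, and the edge-density condition becomes an equality case of Cauchy--Schwarz forcing $f=1$ a.e. Your argument instead extracts only the single identity $t(C_4,\gamma)=p^4=(t(\edgegraph,\gamma))^4$ from the Taylor expansion of $\log\ffredholm$ and delegates the conclusion to the Chung--Graham--Wilson criterion, which the paper does state in the introduction and uses elsewhere (Proposition \ref{prop:gaussian_edge_density}), so there is no circularity. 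The trade-off: your version is shorter and isolates exactly which cycle density matters, but it leans on a deep external theorem as a black box; the paper's version is elementary and self-contained, and in effect re-proves the relevant equality case of Chung--Graham--Wilson under the (stronger) hypothesis that \emph{all} cycle densities $k\geq 3$ match those of $\gamma_p$. Both are valid proofs of the proposition.
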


\begin{proof}
Note that the graphon $\gamma_p$ contains a unique graph function, namely, $g(x,y)=p$ almost everywhere. Suppose that $\gamma=\gamma_p$. 
Obviously, $t( \edgegraph,\gamma_p) = \iint_{[0,1]^2} p \DD{x}\DD{y} = p$. Besides, for any $x \in [0,1]$ and $f \in \leb^2([0,1])$, we have $(T_p f)(x) = p\,\int_0^1 f(y)\DD{y}$, so if we denote 
\begin{align*}
\pi : \leb^2([0,1]) &\to \C \\
f&\mapsto \int_0^1 f(x)\DD{x}
\end{align*}
the projection on the space of constant functions, then $T_p = p\,\pi$. The projection $\pi$ has rank $1$, so it is trace-class and its unique non-zero eigenvalue is $1$ (with multiplicity $1$). It follows immediately that:
\begin{align*}
\det(\id+zT_p) &= \det(\id + pz\,\pi) = 1+pz ;\\
\ffredholm(\gamma_p,z) &= \det(\id + pz\,\pi)\,\E^{-pz\,\tr(\pi)+\frac{p^2z^2}{2}\,\tr(\pi^2)} = (1+pz)\,\E^{-pz+\frac{p^2z^2}{2}}.
\end{align*}
Suppose conversely that $\ffredholm(\gamma,z)=(1+pz)\,\E^{-pz+\frac{p^2z^2}{2}}$ and that $\gamma=[g]$ has edge density $p$. If $\{\lambda_i\}_{i \in I}=\Spec(\gamma)$, then we have for $k \geq 3$
$$\sum_{i\in I} (\lambda_i)^k = \tr((T_g)^k) = (-1)^{k-1}\,k\,[z^k](\log\ffredholm(\id + z T_g)) = p^k.$$
In particular, $\sum_{i \in I} (\lambda_i)^4 = p^4$, so $|\lambda_i|\leq p$ for all $i \in I$. If we had two non-zero eigenvalues $\lambda_1$ and $\lambda_2$, then all the eigenvalues would be of modulus smaller than $q<p$, and we would then have $\sum_{i \in I} (\lambda_i)^6 \leq q^2 \sum_{i \in I} (\lambda_i)^4 = q^2p^4 < p^6$: this contradicts the case $k=6$ of the formula above. So, there is a unique non-zero eigenvalue $\lambda_1 \in \{\pm p\}$, and since $\tr((T_g)^3) = p^3$, $\lambda_1=p$. Then, the symmetric kernel $g(x,y)$ can be written as
$$g(x,y) = p\,f(x)\,f(y)\quad \text{with }f \in \leb^2([0,1])\text{ and }\int_0^1 (f(x))^2\DD{x} = 1.$$
Finally, $ p =t( \edgegraph,\gamma) = \iint_{[0,1]^2} g(x,y) \DD{x}\DD{y} = p\, (\int_0^1 f(x)\DD{x})^2$, so $f$ is a case of equality in the Cauchy--Schwarz inequality. We conclude that $f = 1$ almost everywhere, and that $g(x,y) =p$ almost everywhere on the square. 
\end{proof}

The proof of Theorem \ref{thm:spectral_constant_graphon} is now immediate. Indeed, under the assumptions of this theorem, by Corollary \ref{cor:convergence_fredholm},
 $$\E^{-\frac{pz^2}{2}}\,\ffredholm(\gamma,z) = (1+pz)\,\E^{-pz-\frac{p(1-p)z^2}{2}},$$
and by Proposition \ref{prop:ffredholm_constant_p}, this identity characterises $\gamma_p$ among the graphons with edge density $p$. \medskip

\begin{remark}
Notice that we can actually give an exact formula for the expected characteristic polynomial $\esper[\det(I_n+z\,\frac{A_n(\gamma_p)}{n})]$. Indeed, denote $(B_{ij})_{i\neq j}$ a family of $\binom{n}{2}$ independent Bernoulli variables with parameter $p$, and with $B_{ij}=B_{ji}$. By taking the expansion of the determinant over permutations of size $n$, we get:
\begin{align*}
\esper\!\left[\det\!\left(I_n+z\,\frac{A_n(\gamma_p)}{n}\right)\right]&= \sum_{\sigma \in \sym(n)} \eps(\sigma)\,\esper\!\left[\prod_{i=1}^n \begin{cases}
    1 &\text{if }i=\sigma(i)\\
    \frac{z\,B_{i\sigma(i)}}{n} &\text{if }i \neq \sigma(i) \end{cases}\right]\\
    &= \sum_{\sigma \in \sym(n)} \eps(\sigma)\,\left(\frac{pz^2}{n^2}\right)^{m_2(\sigma)} \prod_{k \geq 3} \left(\frac{pz}{n}\right)^{km_k(\sigma)}\\
    &= \left(\frac{pz}{n}\right)^n  \sum_{\sigma \in \sym(n)} \eps(\sigma) \,\left(\frac{n}{pz}\right)^{\!m_1(\sigma)}\,\left(\frac{1}{p}\right)^{\!m_2(\sigma)},
\end{align*}
where $m_k(\sigma)$ denotes the number of cycles of length $k$ in $\sigma$. The sum is easily computed by using the formalism of symmetric functions, see \cite{Mac95}. Denote $(p_k)_{k \geq 1}$ the Newton power sums in the algebra of symmetric functions $\mathrm{Sym}$, and $p_k(X)$ the following specialisation of $\mathrm{Sym}$:
$$p_1(X) = \frac{n}{pz}\quad;\quad p_2(X) = -\frac{1}{p}\quad;\quad p_{k \geq 3}(X) = (-1)^{k-1}.$$
Then, if $\mathfrak{Y}(n)$ denotes the set of integer partitions with size $n$, we have:
$$
\sum_{\sigma \in \sym(n)} \eps(\sigma) \,\left(\frac{n}{pz}\right)^{\!m_1(\sigma)}\,\left(\frac{1}{p}\right)^{\!m_2(\sigma)} = n!\,\sum_{\mu \in \mathfrak{Y}(n)} \frac{p_\mu(X)}{z_\mu} = n!\,[t^n]\left\{\exp\!\left( \sum_{k=1}^\infty \frac{p_k(X)\,t^k}{k}\right)\right\};
$$
see \cite{Mac95}*{Chapter I, Section 2}. Thus, the expectation is given by:
\begin{align*}
&\left(\frac{pz}{n}\right)^n\,n!\,[t^n]\left\{\exp\!\left(\frac{nt}{pz} - \frac{t^2}{2p} + \sum_{k=3}^\infty \frac{(-1)^{k-1}t^k}{k}\right) \right\}\\
&=\left(\frac{pz}{n}\right)^n\,n!\,[t^n]\left\{\exp\!\left(\left(\frac{n}{pz}-1\right)t + \left(1-\frac{1}{p}\right)\frac{t^2}{2} \right)(1+t) \right\}\\
&= \left(\frac{pz}{n}\right)^n\,n!\,([t^n] + [t^{n-1}])\left\{\exp\!\left(\left(\frac{n}{pz}-1\right)t - \left(\frac{1}{p}-1\right)\frac{t^2}{2} \right)\right\}.
\end{align*}
The coefficients of the power series in $t$ are related to the Hermite polynomials: since $\E^{ut-\frac{t^2}{2}}=\sum_{n=0}^\infty H_n(u)\,\frac{t^n}{n!}$,
$$n!\,[t^n]\left\{\exp\!\left(\left(\frac{n}{pz}-1\right)t + \left(1-\frac{1}{p}\right)\frac{t^2}{2} \right)\right\} =\left(\frac{1}{p}-1\right)^{\!\frac{n}{2}} \,H_n\!\left(\frac{\frac{n}{pz}-1}{\sqrt{\frac{1}{p}-1}}\right).$$
So, we get the explicit formula
$$\esper\!\left[\det\!\left(I_n+z\,\frac{A_n(\gamma_p)}{n}\right)\right] = \left(\frac{pz}{n}\sqrt{\frac{1}{p}-1}\,\right)^{\!n} \left(H_n\!\left(\frac{\frac{n}{pz}-1}{\sqrt{\frac{1}{p}-1}}\right) + \frac{n}{\sqrt{\frac{1}{p}-1}}\,H_{n-1}\!\left(\frac{\frac{n}{pz}-1}{\sqrt{\frac{1}{p}-1}}\right)\right).$$
A saddle point analysis of the integral representation
$$\esper\!\left[\det\!\left(I_n+z\,\frac{A_n(\gamma_p)}{n}\right)\right] = \left(\frac{pz}{n}\right)^n\,\frac{n!}{2\I \pi}\,\oint \exp\!\left(\left(\frac{n}{pz}-1\right)t + \left(1-\frac{1}{p}\right)\frac{t^2}{2} \right)\frac{(1+t)}{t^{n+1}} \DD{t}$$
allows one to recover the asymptotic formula $(1+pz)\,\E^{-pz-\frac{p(1-p)z^2}{2}}$ from Theorem \ref{thm:spectral_constant_graphon}.
\end{remark}\medskip

To close this section, let us consider the functional $F_n(z,\gamma;\eps_1,\ldots,\eps_n)$ from Proposition \ref{prop:expectation_laplacian}. It is natural to consider its evaluation at the \emph{random} parameters $\eps_1(\gamma),\ldots,\eps_n(\gamma)$, and to compute the limit of this quantity. Notice that 
$$F_n(z,\gamma;\eps_1(\gamma),\ldots,\eps_n(\gamma)) = \Pi \times \Sigma$$
with $\Pi = \prod_{i=1}^n (1+\eps_i(\gamma)z)$ and 
$$\Sigma = \frac{1}{n}\sum_{i=1}^n \frac{1+pz+\eps_i(\gamma)z}{1+\eps_i(\gamma)z} = 1+pz - \frac{pz}{n} \sum_{i=1}^n \frac{\eps_i(\gamma)z}{1+\eps_i(\gamma)z}.$$
The sum $\Sigma$ is easy to estimate: by Proposition \ref{prop:concentration_epsi}, all the $\eps_i(\gamma)$ are smaller than $4\sqrt{\log n/n}$ with probability larger than $1-\frac{4}{n}$, so $\Sigma$ converges in probability towards $1+pz$. On the other hand, the product $\Pi$ is related to the tree observables of the graph $G_n(\gamma)$. Indeed,
$$\Pi = \exp\left(\sum_{k=1}^\infty \frac{(-1)^{k-1}z^k}{k}\,\left(\sum_{i=1}^n (\eps_i(\gamma))^k\right)\right),$$
and for any $k \geq 1$,
$$\sum_{i=1}^n (\deg(i,G_n(\gamma)))^k = S_n\left(\begin{tikzpicture}[scale=0.8,baseline=3mm]
\foreach \x in {(0,0),(-1,1),(0,1),(2,1)}
\fill \x circle (2pt);
\draw (-1,1) -- (0,0) -- (0,1);
\draw (2,1) -- (0,0);
\draw (1,1) node {$\cdots$};
\draw (0,-0.3) node {$1$};
\draw (-1,1.3) node {$2$};
\draw (0,1.3) node {$3$};
\draw (2,1.3) node {$k+1$};
\end{tikzpicture}\right)=S_n(T_k),$$
because $\deg(i,G_n(\gamma))^k = \sum_{i_2,\ldots,i_{k+1} \in \lle 1,n\rre} \prod_{l=2}^{k+1} 1_{(i\, \sim_{G_n(\gamma)}\,i_l)}$. By using this observation and our Main Theorem, we can deduce that $\Pi$ converges in distribution when $n$ goes to infinity (but not necessarily in expectation). Indeed, for $k \geq 3$, by Proposition \ref{prop:concentration_epsi},
$$\sum_{i=1}^n |\eps_i(\gamma)|^k = O\!\left(n\,\left(\frac{\log n}{n}\right)^{\!\frac{k}{2}}\right) = O\!\left(\frac{(16\,\log n)^{\frac{k}{2}}}{n^{1-\frac{k}{2}}}\right)$$
with probability larger than $1-\frac{4}{n}$, and with a constant in the $O(\cdot)$ which does not depend on $k$. Therefore, with high probability,
$$\sum_{k=3}^\infty \frac{|z|^k}{k} \left(\sum_{i=1}^n |\eps_i(\gamma)|^k\right) = O\!\left(n\, \left(|z|\sqrt{\frac{\log n}{n}}\right)^3\right) \to_{n \to \infty} 0.$$
So, by using the formulas from Example \ref{ex:expectations_bi_edge}, we see that with high probability, the functional $F_n(z,\gamma;\eps_1(\gamma),\ldots,\eps_n(\gamma))$ is equal up to a multiplicative $(1+o(1))$ to 
\begin{align*}
&(1+pz)\,\exp\!\left(z\,\left(\sum_{i=1}^n \eps_i(\gamma)\right) - \frac{z^2}{2}\left(\sum_{i=1}^n (\eps_i(\gamma))^2\right)\right)\\
&=(1+pz)\,\exp\!\left(z\,n \big(t(\edgegraph,G_n(\gamma))-p\big) - \frac{z^2}{2}\,n\big(t(\begin{tikzpicture}[baseline=-1mm,scale=0.5]
\foreach \x in {(0,0),(1,0),(2,0)}
\fill \x circle (4pt);
\draw (0,0) -- (2,0);
\end{tikzpicture},G_n(\gamma)) - 2p\,t(\edgegraph,G_n(\gamma)) + p^2\big)\right)\\
&\simeq (1+pz)\,\E^{-pz-\frac{pz^2}{2}+\frac{p^2z^2}{2}} \exp\!\left((z+pz^2)\,Y_n(\edgegraph)  - \frac{z^2}{2}\,Y_n(\begin{tikzpicture}[baseline=-1mm,scale=0.5]
\foreach \x in {(0,0),(1,0),(2,0)}
\fill \x circle (4pt);
\draw (0,0) -- (2,0);
\end{tikzpicture}) \right),
\end{align*}
with $Y_n(F) = n(t(F,G_n(\gamma)) - \esper[t(F,G_n(\gamma))])$. Therefore, our Main Theorem ensures that there is a limit in distribution for the product $\Pi\times \Sigma$ for any $z$ fixed, and for any singular graphon $\gamma$. As we do not know in the case of a general singular graphon what is the limiting distribution of the rescaled graph densities, unfortunately we cannot pursue this computation. Nonetheless, we believe that this approach might lead to a spectral reformulation of our Conjecture: this was the objective of the two last Subsections. Notice that the sum $\Sigma$ is always bounded (at least for $z$ small enough), so we also have a strong convergence in moments for this quantity. This is not \emph{a priori} the case of the product $\Pi$. Hence, even if we knew the limiting distribution of the rescaled densities $Y_n(\edgegraph)$ and $Y_n(\begin{tikzpicture}[baseline=-1mm,scale=0.5]
\foreach \x in {(0,0),(1,0),(2,0)}
\fill \x circle (4pt);
\draw (0,0) -- (2,0);
\end{tikzpicture})$, it is not clear that we could use the computation above in order to obtain $\lim_{n \to \infty} \esper[F_n(z,\gamma;\eps_1(\gamma),\ldots,\eps_n(\gamma))]$. Besides, as mentioned in the introduction, this limit is probably different from $\lim_{n \to \infty}\esper[D\!L_n(z,\gamma;\eps_1(\gamma),\ldots,\eps_n(\gamma))]$.
\bigskip

\section{Convergence of the scaled densities}\label{sec:convergence}
We fix again a singular graphon $\gamma$, and we now aim to prove our Main Theorem. By combining Proposition \ref{prop:expansion_cumulant} and Theorem \ref{thm:additional_equations}, we see that for any family of finite graphs $F_1,\ldots,F_s$ with sizes $k_1,\ldots,k_s \geq 2$,
$$\kappa(S_n(F_1),\ldots,S_n(F_s)) = O(n^{k_1+\cdots + k_s-s}).$$
Equivalently, if we consider the rescaled densities $Y_n(F) = n\,(t(F,G_n(\gamma))-\esper[t(F,G_n(\gamma))])$, then all the joint cumulants $\kappa(Y_n(F_1),\ldots,Y_n(F_s))$ are bounded as functions of $n$. In this section, we are going to prove that all these cumulants converge when $n$ goes to infinity, and also that they all satisfy a strong upper bound analogous to Equation \eqref{eq:bound_cumulant}, but with a smaller power of $n$ since we are dealing with singular graphons. This strong upper bound (Theorem \ref{thm:upper_bound}) will allow us to resum the cumulant generating series
$$\log \esper\!\left[\E^{z_1Y_n(F_1)+\cdots + z_sY_n(F_s)}\right] = \sum_{r \geq 2}\sum_{\substack{r_1,\ldots,r_s \geq 0 \\ r_1+\cdots+ r_s=r}} \frac{(z_1)^{r_1}\cdots (z_s)^{r_s}}{(r_1)!\cdots (r_s)!}\,\kappa\big((Y_n(F_1))^{\otimes r_1},\ldots,(Y_n(F_s))^{\otimes r_s}\big),$$
and to prove that this series converges locally uniformly on a fixed polydisk when $n$ goes to infinity. This entails the joint convergence in distribution which we want to establish. We shall also write an explicit formula for the third and fourth cumulants of the edge density $t(\edgegraph,G_n(\gamma))$; this computation relies in particular on the factorisation over transitive motives. The vanishing of the limiting cumulants of order $r \geq 3$ of the edge density will turn out to be equivalent to the Chung--Graham--Wilson criterion stated in the introduction; see Proposition \ref{prop:gaussian_edge_density}.
\medskip

\subsection{A strong upper bound on the cumulants of singular graphons}
Until the end of this paragraph, $s$ is a fixed integer larger than $2$, $F_1,\ldots,F_s$ are fixed finite graphs with sizes $k_1,\ldots,k_s\geq 2$, and $\gamma$ is a fixed singular graphon. We start from the expansion by multilinearity \eqref{eq:multilinearity} of the cumulant of $S_n(F_1),\ldots,S_n(F_s)$, and we gather the families of maps $\phi = (\phi_1,\ldots,\phi_s)$ according to the associated set partitions $\pi(\phi) \in \pym(V)$, where $V=\bigsqcup_{r=1}^s V_{F_r}$:
$$\kappa(S_n(F_1),\ldots,S_n(F_s)) = \sum_{\pi \in \pym(V)} \sum_{\substack{\phi_1 : \lle 1,k_1 \rre \to \lle 1,n\rre \\ \vdots \qquad\vdots \vspace*{1mm}\\ \phi_s : \lle 1,k_s\rre \to \lle 1,n\rre \\ \pi(\phi) = \pi}} \kappa(A_{\phi_1}(F_1),\ldots,A_{\phi_s}(F_s)).$$
By Lemma \ref{lem:connected_hypergraph}, the set partitions $\pi$ such that $H_{\pi}$ is not connected do not contribute to the sum. On the other hand, the set partitions $\pi$ such that $H_{\pi}$ is a hypertree correspond in general to the term with degree $k_1+\cdots+k_s-(s-1)$ in $n$, but here they yield a term with degree $k_1+\cdots +k_s-s$ since $\gamma$ is singular. Therefore, it is convenient to split the joint cumulant in two parts: $\kappa(S_n(F_1),\ldots,S_n(F_s)) = \alpha+\beta$ with
\begin{align*}
\alpha &= \sum_{\pi \in \pym_{\mathrm{hypertree}}(F_1,\ldots,F_s)} \sum_{\substack{\phi_1 : \lle 1,k_1 \rre \to \lle 1,n\rre \\ \vdots \qquad\vdots \vspace*{1mm}\\ \phi_s : \lle 1,k_s\rre \to \lle 1,n\rre \\ \pi(\phi) = \pi}} \kappa(A_{\phi_1}(F_1),\ldots,A_{\phi_s}(F_s)) ;\end{align*}
\begin{align*}
\beta &= \sum_{\substack{\pi \in \pym(V) \\ H_{\pi} \text{ is connected} \\ H_{\pi} \text{ is not a hypertree} }}\sum_{\substack{\phi_1 : \lle 1,k_1 \rre \to \lle 1,n\rre \\ \vdots \qquad\vdots \vspace*{1mm}\\ \phi_s : \lle 1,k_s\rre \to \lle 1,n\rre \\ \pi(\phi) = \pi}} \kappa(A_{\phi_1}(F_1),\ldots,A_{\phi_s}(F_s)).
\end{align*}
The quantity $\alpha$ has already been studied in Subsection \ref{sub:joint_cumulants}. By the discussion leading to Proposition \ref{prop:expansion_cumulant}, it is equal for any graphon $\gamma$ to
$$\sum_{\pi \in \pym_{\mathrm{hypertree}}(F_1,\ldots,F_s)} n^{\downarrow k_1+\cdots + k_s-(s-1)}\, \kappa_\pi(F_1,\ldots,F_s)(\gamma).$$
However, by Theorem \ref{thm:additional_equations}, if $\gamma$ is singular, then
$$\kappa_s(F_1,\ldots,F_s)(\gamma) = \sum_{\pi \in \pym_{\mathrm{hypertree}}(F_1,\ldots,F_s)} \kappa_\pi(F_1,\ldots,F_s)(\gamma) = 0.$$
An adequate upper bound on $\alpha$ follows therefore from:

\begin{proposition}\label{prop:upper_bound_alpha}
For any graphon $\gamma$ and any finite graphs $F_1,\ldots,F_s$ with sizes $k_1,\ldots,k_s \geq 2$,
\begin{align*}
&\left|\sum_{\pi \in \pym_{\mathrm{hypertree}}(F_1,\ldots,F_s)} \left(n^{k_1+\cdots+k_s-(s-1)} - n^{\downarrow k_1+\cdots + k_s-(s-1)}\right)\, \kappa_\pi(F_1,\ldots,F_s)(\gamma) \right| \\
&\leq 2^{s-2} (k_1+\cdots+k_s)^{s}\,k_1k_2\cdots k_s\,n^{k_1+\cdots+k_s-s}.
\end{align*}
\end{proposition}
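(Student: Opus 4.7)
The plan is to estimate three ingredients independently: the polynomial factor $n^{k-s+1}-n^{\downarrow k-s+1}$ (where $k = k_1 + \cdots + k_s$), the magnitude of each individual $\kappa_\pi(F_1,\ldots,F_s)(\gamma)$, and the cardinality of $\pym_{\mathrm{hypertree}}(F_1,\ldots,F_s)$; then assemble the three bounds.

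For the polynomial factor, I will use the elementary estimate
$$n^{\downarrow m} = n^m\prod_{i=0}^{m-1}\!\left(1-\frac{i}{n}\right) \geq n^m - \binom{m}{2}n^{m-1},$$
valid for $n \geq m$ by the union-bound inequality $\prod_i(1-x_i) \geq 1 - \sum_i x_i$ on $x_i \in [0,1]$, with the case $n < m$ handled directly since then $n^{\downarrow m}=0$. Applied with $m = k-s+1$, this produces the factor $\binom{k-s+1}{2}\,n^{k-s} \leq \frac{k^2}{2}\,n^{k-s}$, which already carries the correct power of $n$ appearing in the target inequality.

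For the magnitude of each $\kappa_\pi$, I will invoke the identity established in the proof of Proposition \ref{prop:expansion_cumulant}: $\kappa_\pi(F_1,\ldots,F_s)(\gamma) = \kappa(A_{\phi_1}(F_1),\ldots,A_{\phi_s}(F_s))$ for any tuple $\phi$ realising $\pi$. As each $A_{\phi_r}(F_r)$ is a Bernoulli variable, every joint moment appearing in the Leonov--Shiryaev expansion of this cumulant lies in $[0,1]$, so $|\kappa_\pi(F_1,\ldots,F_s)(\gamma)| \leq \sum_{\theta \in \pym(s)}(\ell(\theta)-1)! =: C(s)$, a constant depending only on $s$. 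For the enumeration, the reconstruction procedure \ref{enum:reconstruction_1}--\ref{enum:reconstruction_2} provides, for each hypertree $H \in \Hset(s)$, exactly $\prod_{r=1}^s(k_r)^{\downarrow c_r}$ set partitions $\pi$ with $H_\pi = H$, where $c_r = \deg_H(r)$. Because a connected hypertree on $s \geq 2$ vertices has $c_r \geq 1$ for every $r$, one has $(k_r)^{\downarrow c_r} \leq k_r\,k^{c_r-1}$; summing the exponents via the hypertree identity $\sum_r c_r = \sum_{e \in E_H}|e| = (s-1)+|E_H|$ together with $|E_H| \leq s-1$ gives
$$\prod_{r=1}^s(k_r)^{\downarrow c_r} \leq k_1 k_2 \cdots k_s\cdot k^{|E_H|-1} \leq k_1 k_2 \cdots k_s\cdot k^{s-2}.$$

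Combining the three estimates and summing over the finite set $\Hset(s)$ produces an inequality of the claimed shape $\tilde C(s)\,k^s\,k_1\cdots k_s\,n^{k-s}$, with $\tilde C(s)$ expressible in closed form in terms of $|\Hset(s)|$ and the Möbius sum $C(s)$. The main obstacle is to compress this constant down to the sharp value $2^{s-2}$: the crude bounds above overshoot for $s \geq 3$. Tightening will require either a sharper enumeration of labelled hypertrees (for instance via the generating series of \cite{Kal99}), a sharper bound on joint cumulants of Bernoulli variables exploiting the algebraic cancellations built into $\kappa_\pi$, or---most promisingly---absorbing the Stirling-number coefficients of $n^m - n^{\downarrow m}$ directly into the enumeration of $\pi$'s, noticing that these coefficients count set partitions of $\lle 1,m\rre$ with fewer than $m$ blocks, which interact naturally with the merging operation behind the reconstruction procedure.
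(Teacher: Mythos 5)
There is a genuine gap: your argument yields the correct power $n^{k_1+\cdots+k_s-s}$ and the correct dependence on $k_1,\ldots,k_s$, but not the stated constant $2^{s-2}$, and you acknowledge this yourself. The gap is not cosmetic. Your bound on an elementary cumulant, $|\kappa_\pi(F_1,\ldots,F_s)(\gamma)|\leq \sum_{\theta\in\pym(s)}(\ell(\theta)-1)!$, is a Fubini-type number growing super-exponentially (essentially like $s!$), and your enumeration introduces an extra factor $|\Hset(s)|\geq s^{s-2}$ by bounding each hypertree's contribution separately and then summing. A constant of this size is unusable downstream: Theorem \ref{thm:upper_bound} is fed into the resummation of the cumulant generating series in Section \ref{sec:convergence}, where the factor $r^r$ coming from $(k_1+\cdots+k_s)^s$ already saturates the $\frac{1}{r_1!\cdots r_s!}$ in the series; an additional factorial-type constant would make the Laplace transform diverge on every polydisk, destroying the Main Theorem. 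So proving the proposition with \emph{some} constant $\tilde C(s)$ is strictly weaker than what is claimed and needed.

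The missing idea is the dependency-graph bound on joint cumulants from \cite{FMN16}*{Equation (9.9)}: the Bernoulli variables $A_\phi(F)$ admit a natural dependency graph, and for such variables $|\kappa(A_{\phi_1}(F_1),\ldots,A_{\phi_s}(F_s))|\leq 2^{s-1}\,|\ST_{K_\phi}|$, where $K_\phi$ is the induced dependency graph on $\lle 1,s\rre$; one then checks that the spanning trees of $K_\phi$ are exactly the trees contained in the hyperedges of $H_{\pi}$. This replaces your Möbius-sum bound $C(s)$ by $2^{s-1}$ times a spanning-tree count, and it converts the whole sum into the cardinality of the set of \emph{pairs} $(T,\pi)$ with $T\in\ST_{H_\pi}$. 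That set is in bijection with bi-edge-labelled Cayley trees on $\lle 1,s\rre$ whose half-edge labels at $r$ lie in $\lle 1,k_r\rre$, and the weighted Cayley formula \eqref{eq:MT3} evaluates it \emph{exactly} as $(k_1+\cdots+k_s)^{s-2}\,k_1\cdots k_s$ — no separate factor $|\Hset(s)|$ ever appears. Combined with your (correct) estimate $n^m-n^{\downarrow m}\leq\binom{m}{2}n^{m-1}\leq\frac{(k_1+\cdots+k_s)^2}{2}n^{m-1}$, this gives $\frac{k^2}{2}\cdot 2^{s-1}\cdot k^{s-2}k_1\cdots k_s\,n^{k-s}=2^{s-2}k^s k_1\cdots k_s\,n^{k-s}$ on the nose. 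None of the three tightening routes you propose at the end supplies this: the sharper hypertree enumeration and the Stirling-number absorption attack the wrong factors, and the "algebraic cancellations" in $\kappa_\pi$ are precisely what the dependency-graph/spanning-tree lemma quantifies.
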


\begin{lemma}
Consider an arbitrary graphon $\gamma$ and finite graphs $F_1,\ldots,F_s$ with sizes $k_1,\ldots,k_s \geq 2$. We also consider $\phi=(\phi_1,\ldots,\phi_s)$, a family of maps $\phi_r :V_{F_r} \to \lle 1,n\rre$, and we denote as before $\pi =\pi(\phi)$ the associated set partition of $V=\bigsqcup_{i=1}^r V_{F_i}$ and $H_{\pi}$ the corresponding hypergraph on $\lle 1,s\rre$. Then,
$$|\kappa_\pi(A_{\phi_1}(F_1),\ldots,A_{\phi_s}(F_s))| \leq 2^{s-1}\,\left|\ST_{H_\pi}\right|,$$
where $\ST_{H_\pi}$ denotes the set of trees $T$:
\begin{itemize}
\item with vertex set $\lle 1,s\rre$;
\item whose edges are all included in hyperedges of $H_\pi$.
\end{itemize}
\end{lemma}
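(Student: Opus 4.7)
The plan is to recognise that the family $(A_{\phi_1}(F_1),\ldots,A_{\phi_s}(F_s))$ is a family of bounded random variables endowed with a natural dependency graph, and then invoke the classical cumulant estimate for such families. First, I would observe that by construction,
$$A_{\phi_r}(F_r) = \prod_{\{a,b\} \in E_{F_r}} 1_{\big(U_{\phi_r(a),\phi_r(b)} \,\leq\, g(X_{\phi_r(a)}, X_{\phi_r(b)})\big)}$$
is measurable with respect to the $\sigma$-algebra generated by the variables $X_i$ and $U_{ij}$ with indices $i,j \in \phi_r(V_{F_r})$. In particular, two variables $A_{\phi_r}(F_r)$ and $A_{\phi_{r'}}(F_{r'})$ are independent whenever $\phi_r(V_{F_r}) \cap \phi_{r'}(V_{F_{r'}}) = \emptyset$, and by definition of $\pi=\pi(\phi)$ and of the hypergraph $H_\pi$, these two image sets meet if and only if $r$ and $r'$ both belong to some common hyperedge of $H_\pi$.

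Hence, a dependency graph for the family $(A_{\phi_r}(F_r))_{1 \leq r \leq s}$ in the sense of \cite{FMN16}*{Chapter 9} is the graph $\Gamma(H_\pi)$ with vertex set $\lle 1,s\rre$ and with an edge between $r$ and $r'$ whenever $\{r,r'\}$ is contained in some hyperedge of $H_\pi$. I would then apply the general upper bound on joint cumulants coming from the theory of dependency graphs (see \cite{FMN16}*{Theorem 9.1.7}): for any bounded random variables $X_1,\ldots,X_s$ with dependency graph $\Gamma$, one has
$$|\kappa(X_1,\ldots,X_s)| \leq 2^{s-1}\,\left(\prod_{r=1}^s \|X_r\|_\infty\right) N_{\ST}(\Gamma),$$
where $N_{\ST}(\Gamma)$ denotes the number of spanning trees of $\Gamma$. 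Since each $A_{\phi_r}(F_r)$ is a Bernoulli variable, $\|A_{\phi_r}(F_r)\|_\infty \leq 1$, so the product of norms is trivially bounded by $1$.

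It then remains to identify $N_{\ST}(\Gamma(H_\pi))$ with $|\ST_{H_\pi}|$. A spanning tree of $\Gamma(H_\pi)$ is by definition a tree $T$ on the vertex set $\lle 1,s\rre$ whose edges are all edges of $\Gamma(H_\pi)$, that is to say, pairs of vertices that belong jointly to some hyperedge of $H_\pi$. This is exactly the set $\ST_{H_\pi}$ introduced in the statement, which yields the claimed inequality. The main (and essentially only) technical point is to check carefully that the measurability-based argument identifies $\Gamma(H_\pi)$ as a \emph{valid} dependency graph; this is immediate here because the latent variables $(X_i)_{i \geq 1}$ and $(U_{ij})_{j>i\geq 1}$ are jointly independent, so that any two variables $A_{\phi_r}(F_r)$ indexed by sets $\phi_r(V_{F_r})$ lying in disjoint regions of the index set are automatically independent, and more generally variables indexed by disjoint unions of such regions split into independent blocks.
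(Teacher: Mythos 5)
Your proposal is correct and follows essentially the same route as the paper: identify the dependency graph of the Bernoulli variables $A_{\phi_r}(F_r)$ via the disjointness of the image sets $\phi_r(V_{F_r})$, note that this graph is exactly the $2$-section of the hypergraph $H_\pi$ (so its spanning trees are the trees contained in $H_\pi$), and apply the spanning-tree cumulant bound of \cite{FMN16}*{Section 9} with the $\leb^\infty$ norms bounded by $1$. The paper phrases this through a global dependency graph on all pairs $(\phi,F)$ and its induced subgraph, but the content is identical.
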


\begin{proof}
Consider the graph $DG$ with vertex set
$$V_{DG} = \{(\phi,F)\,| \,F\text{ finite graph},\,\phi : V_F \to \lle 1,n\rre\},$$
and with an edge between the vertices $(\phi,F)$ and $(\phi',F')$ if $\phi(V_F) \cap \phi'(V_{F'})\neq \emptyset$. Then, in the sense of \cite{FMN16}*{Definition 9.1.1}, $DG$ is a dependency graph for the Bernoulli random variables $A_{\phi}(F)$. As a consequence, by \cite{FMN16}*{Equation (9.9)}, we have the upper bound
$$|\kappa(A_{\phi_1}(F_1),\ldots,A_{\phi_s}(F_s))| \leq 2^{s-1}\,\left|\ST_{DG[(\phi_1,F_1),\ldots,(\phi_s,F_s)]}\right|,$$
where:
\begin{itemize}
     \item  $DG[(\phi_1,F_1),\ldots,(\phi_s,F_s)]$ is the graph induced by $DG$ on the set of vertices $\lle 1,s\rre$, with $i$ connected to $j$ if $\phi_i(V_{F_i}) \cap \phi_{j}(V_{F_j})\neq \emptyset$.
     \item $\ST_{K}$ denotes the set of spanning trees of a labelled graph $K$.
 \end{itemize} 
 Let us relate the induced graph $K_\phi = DG[(\phi_1,F_1),\ldots,(\phi_s,F_s)]$ to the hypergraphs $G_{\pi(\phi)}$ and $H_{\pi(\phi)}$ introduced in Subsection \ref{sub:joint_cumulants}. We have:
\begin{align*}
&\big(i \leftrightarrow_{K_\phi} j \big) \\
&\iff \big(\text{the images of the maps $\phi_i$ and $\phi_j$ have a non-empty intersection}\big) \\
&\iff \big(\text{the set partition $\pi(\phi)$ has a part which contains a point in $V_{F_i}$ and a point in $V_{F_j}$}\big)\\
&\iff \big(i \leftrightarrow_{H_{\pi(\phi)}} j\big).
\end{align*}
Thus, $K_\phi$ is the graph obtained from $H_{\pi(\phi)}$ by replacing each hyperedge $e=\{r_1,r_2,\ldots,r_d\}$ by the complete graph on the $d$ vertices $r_1,\ldots,r_d$, and then by deleting the possible loops and multiple edges which have been created. For instance, if $H_{\pi(\phi)}$ is the hypertree in Figure \ref{fig:hypertree}, then $K_\phi$ is:
\begin{center}
\begin{tikzpicture}[scale=1]
\foreach \x in {(0,0),(0,1),(-1,0),(-1,1),(1.5,0),(2.5,0),(1.5,-1),(-0.5,-1),(0.5,-1)}
\fill \x circle (2pt);
\draw [thick] (0,0) -- (-0.5,-1) -- (0.5,-1) -- (0,0) -- (-1,0) -- (-1,1) -- (0,1) -- (0,0) -- (1.5,0) -- (2.5,0);
\draw [thick] (1.5,0) -- (1.5,-1);
\draw [thick] (-1,1) -- (0,0);
\draw [thick] (0,1) -- (-1,0);
\draw (-1.25,1.25) node {$1$};
\draw (0.25,1.25) node {$2$};
\draw (0.25,0.25) node {$4$};
\draw (-1.25,-0.25) node {$3$};
\draw (-0.5,-1.3) node {$7$};
\draw (0.5,-1.3) node {$8$};
\draw (1.5,-1.3) node {$9$};
\draw (1.5,0.3) node {$6$};
\draw (2.75,0.25) node {$5$};
\end{tikzpicture}
\end{center}
It is then easy to see that the spanning trees of $K_\phi$ are the trees included in the hypergraph $H_{\pi(\phi)}$.
\end{proof}

\begin{proof}[Proof of Proposition \ref{prop:upper_bound_alpha}]
For any integer $K \geq 1$,  $n^{K} - n^{\downarrow K}$ is the number of non-injective maps from $\lle 1,K\rre$ to $\lle 1,n\rre$, so it is always smaller than $\binom{K}{2}\,n^{K-1}$: the binomial coefficient $\binom{K}{2}$ corresponds to the choice of two elements in $\lle 1,K\rre$ sent to the same image, and then we have to choose less than $K-1$ images in $\lle 1,n\rre$. Consequently, the quantity that we want to control is smaller than:
$$\binom{k_1+\cdots+k_s-(s-1)}{2}\,2^{s-1}\,n^{k_1+\cdots+k_s-s}\,\card\{(T,\pi)\,|\,T\in \ST_{H_\pi},\,H_\pi \in \Hset(s)\}.$$
We claim that the cardinality of the set of pairs $(T,\pi)$ is equal to:
$$(k_1+\cdots+k_s)^{s-2}\,k_1k_2\cdots k_s.$$
Note that this combinatorial formula implies immediately the proposition. Suppose given a Cayley tree $T$ on $s$ vertices (spanning tree of the complete graph $K_s$), and for each integer $r \in \lle 1,s\rre$, a $(\deg(r,T))$-tuple of elements in $\lle 1,k_r\rre$. A convenient way to represent this data is as follows: we draw the Cayley tree $T$ and, for each edge $\{i,j\} \in E_T$, we label the half-edge stemming from $i$ by an integer in $\lle 1,k_i\rre$. We thus obtain a \emph{bi-edge-labelled Cayley tree}. For instance, if $k_1=k_2=k_3=4$ and $T = \begin{tikzpicture}[baseline=-1mm,scale=0.5]
\foreach \x in {(0,0),(1,0),(2,0)}
\fill \x circle (4pt);
\draw (0,0) -- (2,0);
\end{tikzpicture}$, then a possible choice is
\begin{center}
 \begin{tikzpicture}[baseline=-1mm,scale=2]
\foreach \x in {(0,0),(1,0),(2,0)}
\fill \x circle (1pt);
\draw (0,0) -- (2,0);
\foreach \x in {1,2,3}
\draw (\x-1,-0.2) node {$\x$};
\draw [color=red] (0.15,-0.1) node {$3$};
\draw [color=red] (0.85,-0.1) node {$2$};
\draw [color=red] (1.15,-0.1) node {$2$};
\draw [color=red] (1.85,-0.1) node {$4$};
\end{tikzpicture}\,.
 \end{center} 
We can associate to a bi-edge-labelled Cayley tree $T$ of size $s$ a set partition $\pi$ of $V = \bigsqcup_{r=1}^s V_{F_r}$. Starting from the trivial set partition with $|V|$ singletons, we enumerate the edges of the bi-edge-labelled tree $T$ according for instance to a breadth-first search which starts from the vertex $1$. For each bi-labelled edge
\begin{center}
 \begin{tikzpicture}[baseline=-1mm,scale=2]
\foreach \x in {(0,0),(1,0)}
\fill \x circle (1pt);
\draw (0,0) -- (1,0);
\draw [color=red] (0.15,-0.1) node {$a_i$};
\draw (0,-0.2) node {$i$};
\draw (1,-0.2) node {$j$};
\draw [color=red] (0.85,-0.1) node {$a_j$};
\end{tikzpicture}
 \end{center} 
that we encounter, we join the parts which contain the two elements $(i,a_i \in \lle 1,k_i\rre)$ and $(j,a_j \in \lle 1,k_j\rre)$. The resulting set partition $\pi \in \pym(V)$ does not depend on the order in which one makes the junctions.
For example, the bi-edge-labelled Cayley tree on $3$ vertices drawn above corresponds to the set partition 
$$\pi=\{(1,3),(2,2),(3,4)\} \sqcup \bigsqcup \{\text{singletons}\}$$
of the set $V$ which has $3\times 4 =12$ elements. It is easily seen that the corresponding hypergraph $H_\pi$ contains $T$, and we claim that it is always a hypertree. Indeed, when going from the trivial set partition $\pi_{\mathrm{trivial}}$ to the set partition $\pi$ associated to the bi-edge-labelled Cayley tree $T$, the corresponding hypertrees are modified as follows. Each bi-labelled edge of $T$ between $i$ and $j$ makes one:
\begin{itemize}
\item either create an edge $\{i,j\}$;
\item or, replace an hyperedge $e$ containing $i$ by $e\sqcup \{j\}$. 
\end{itemize}
Because of the rules above, the sum of the degrees in $H_\pi$ is equal to the number of edges of $T$, that is $s-1$; hence, $H_\pi$ is a connected hypergraph (because it contains $T$) with sum of degrees $s-1$, so it is indeed a hypertree. \medskip

Thus, to each bi-edge-labelled tree $T$ with vertex set $\lle 1,s\rre$ and whose labels stemming from $r \in \lle 1,s\rre$ belong to $\lle 1,k_r\rre$, we have associated a pair $(T,\pi)$ with $T \subset H_{\pi}$ and $H_{\pi} \in \Hset(s)$. Conversely, if $(T,\pi)$ is such a pair, then for each part $\{(r_1,a_1),\ldots,(r_t,a_t)\}$ of $\pi$, the trace of $T$ on the set of vertices $\{r_1,\ldots,r_t\}$ is a tree, and we can label each edge $\{r_i,r_j\}$ of this trace by the two labels $a_i$ and $a_j$. We conclude that there is a bijection between the bi-edge-labelled trees $T$ whose labels stemming from $r \in \lle 1,s\rre$ belong to $\lle 1,k_r\rre$, and the pairs which we wanted to count. The formula $(k_1+\cdots+k_s)^{s-2}\,k_1k_2\cdots k_s$ is then a generalisation of Kirchhoff's theorem: the generating function of the spanning trees of the complete graph $K_s$ counted according to their degree distribution is:
\begin{equation}
\sum_{T \in \ST(K_s)} (z_1)^{\deg(1,T)}(z_2)^{\deg(2,T)}\cdots (z_s)^{\deg(s,T)} = (z_1z_2\cdots z_s)\,(z_1+\cdots+z_s)^{s-2}.\label{eq:MT3}\tag{MT3}
\end{equation}
This generalisation is proved in the Appendix at the end of the article. Setting $z_1=z_2=\cdots=z_s=1$ yields the classical formula $s^{s-2} = |\ST(K_s)|$. Setting $z_1=k_1,\ldots,z_s=k_s$, we obtain the number of correctly bi-edge-labelled Cayley trees, whence our estimate.
\end{proof}

As an immediate corollary, we see that for a singular graphon $\gamma$,
$$|\alpha| \leq 2^{s-2} (k_1+\cdots+k_s)^{s}\,k_1k_2\cdots k_s\,n^{k_1+\cdots+k_s-s}$$
if $s \geq 2$. Let us now evaluate the quantity $\beta$. By using again the upper bound on the elementary joint cumulants $\kappa(A_{\phi_1}(F_1),\ldots,A_{\phi_s}(F_s))$, we obtain:
$$|\beta| \leq 2^{s-1}\,\card\{(T,\phi)\,|\,T \in \ST_{H_{\pi(\phi)}},\,H_{\pi(\phi)} \text{ is connected but is not a hypertree}\}.$$
Given a hypergraph $H$ on the set of vertices $\lle 1,s\rre$, we call \emph{graph reduction} of $H$ a graph $G$ on the vertex set $\lle 1,s\rre$, possibly with loops and with multiple edges, which is obtained by replacing each hyperedge $e=\{i_1,\ldots,i_d\}$ of $H$ by one of the $d^{d-2}$ Cayley tree connecting $i_1,\ldots,i_d$. To be more precise, for each hyperedge $e \in E_H$, we first choose a Cayley tree $T_e$ on the set of vertices $\lle 1,d\rre$, and then we map it to the elements $i_1,\ldots,i_d \in \lle 1,s\rre$. The resulting graph on $\{i_1,\ldots,i_d\}$ is not necessarily a tree, because we allow hyperedges $e \in E_H$ with multiple occurrences of a vertex $i$. 
\begin{example}
We have drawn below a hypergraph on $3$ vertices (on the left) and a possible graph reduction of it (on the right).
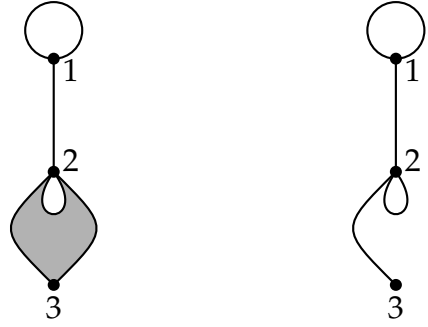
\begin{figure}[ht]
\begin{center}
\begin{tikzpicture}[scale=1.5]
\draw [thick] (0,0.25) circle (0.25);
\draw [thick] (0,0) -- (0,-1);
\fill [white!70!black] (0,-2) .. controls (-0.5,-1.5) .. (0,-1) .. controls (0.5,-1.5) .. (0,-2);
\fill [white] (0,-1) .. controls (0.35,-1.5) and (-0.35,-1.5) .. (0,-1);
\draw [thick] (0,-2) .. controls (-0.5,-1.5) .. (0,-1) .. controls (0.5,-1.5) .. (0,-2);
\draw [thick] (0,-1) .. controls (0.35,-1.5) and (-0.35,-1.5) .. (0,-1);
\foreach \x in {(0,0),(0,-1),(0,-2)}
\fill \x circle (1.5pt);
\draw (0.15,-0.1) node {$1$};
\draw (0.15,-0.9) node {$2$};
\draw (0,-2.2) node {$3$};
\begin{scope}[shift={(3,0)}]
\draw [thick] (0,0.25) circle (0.25);
\draw [thick] (0,0) -- (0,-1);
\draw [thick] (0,-2) .. controls (-0.5,-1.5) .. (0,-1);
\draw [thick] (0,-1) .. controls (0.35,-1.5) and (-0.35,-1.5) .. (0,-1);
\foreach \x in {(0,0),(0,-1),(0,-2)}
\fill \x circle (1.5pt);
\draw (0.15,-0.1) node {$1$};
\draw (0.15,-0.9) node {$2$};
\draw (0,-2.2) node {$3$};
\end{scope}
\end{tikzpicture}
\end{center}
\caption{A graph reduction of a hypergraph is obtained by replacing each hyperedge by a Cayley tree on the underlying multiset.}
\end{figure}
The hyperedges with degree $1$ are conserved, whereas the hyperedge $\{2,2,3\}$ must be replaced by the image of a Cayley tree on $\{a,b,c\}$ by the map $a \mapsto 2,\,b\mapsto 2,\,c\mapsto 3$. Here, we have chosen the Cayley tree $\begin{tikzpicture}[baseline=-1mm,scale=0.5]
\foreach \x in {(0,0),(1,0),(2,0)}
\fill \x circle (4pt);
\draw (0,0) -- (2,0);
\draw (0,-0.4) node {\tiny $a$};
\draw (1,-0.4) node {\tiny $b$};
\draw (2,-0.4) node {\tiny $c$};
\end{tikzpicture}$.
\end{example}

In general, since we replace each hyperedge $e$ of $H$ with degree $\deg e$ by a graph with $\deg e$ edges, the total number of edges of a graph reduction of a hypergraph $H$ is equal to $\sum_{e \in E_H}\deg e$. On the other hand, a hypergraph $H$ is connected if and only if its graph reductions are connected. By combining these two remarks, we see that a hypergraph $H$ is a hypertree if and only if its graph reductions are trees. This observation leads us to the following:

\begin{lemma}\label{lem:create_a_cycle}
Let $(T,H)$ be a pair which consists in a Cayley tree on the vertex set $\lle 1,s\rre$, and a connected hypergraph $H$ on $\lle 1,s\rre$ which is not a hypertree and such that $T \in \ST_H$. Then, there exists a pair $(i,j) \in \lle 1,s\rre^2 \setminus E_T$ and a graph reduction $G$ of $H$ such that $E_T \sqcup \{i,j\} \subset E_G$. 
\end{lemma}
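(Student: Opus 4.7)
The plan is to construct a graph reduction $G$ of $H$ whose edge multiset contains $E_T$, and then extract the extra edge from the counting inequality $|E_G| = \sum_{e \in E_H} \deg(e) \geq s > s-1 = |E_T|$, which is forced by the assumption that $H$ is connected but not a hypertree. Once such a $G$ is built, any edge of the multiset difference $E_G \setminus E_T$ provides the pair $\{i,j\}$ of the statement.

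To construct such a $G$, I would exploit the hypothesis $T \in \ST_H$: every edge $\{i,j\} \in E_T$ lies in some hyperedge of $H$, so fix an assignment $\{i,j\} \mapsto e(i,j) \in E_H$ with $i, j \in e(i,j)$. For each hyperedge $e \in E_H$, the set
\[
A_e = \{\{i,j\} \in E_T : e(i,j) = e\}
\]
is a subforest of $T$ whose vertex set lies in $\mathrm{supp}(e)$, and the collection $(A_e)_{e \in E_H}$ partitions $E_T$. The problem then reduces to showing that for every $e \in E_H$, some Cayley tree on the labeled multiset $e$ projects to an edge multiset containing $A_e$.

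For this step, writing $e = \{v_1,\ldots,v_d\}$ with $d = \deg(e) + 1$, I would lift $A_e$ to a forest $\tilde A_e$ on the position set $\{1,\ldots,d\}$ as follows: for each vertex $u \in V(A_e)$ choose an injective $\sigma(u) \in \{1,\ldots,d\}$ with $v_{\sigma(u)} = u$, and replace each edge $\{u,v\} \in A_e$ by $\{\sigma(u),\sigma(v)\}$. The injectivity of $\sigma$ (which is possible because $V(A_e) \subset \mathrm{supp}(e)$ and each element of $\mathrm{supp}(e)$ has at least one position) guarantees that $\tilde A_e$ is a forest on $\{1,\ldots,d\}$; I would then complete it to a spanning tree of the complete graph on $\{1,\ldots,d\}$ by any greedy extension. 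Projecting these Cayley trees back to $\lle 1,s\rre$ via $k \mapsto v_k$ assembles into the desired graph reduction $G$, and by construction $E_T = \bigsqcup_e A_e$ is contained in $E_G$ as multisets.

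The main obstacle is the bookkeeping for hyperedges with repeated vertices: the phrase \emph{Cayley tree on a multiset} has to be parsed at the level of labeled positions, and the injectivity of $\sigma$ is essential to ensure that the lift $\tilde A_e$ remains acyclic (without it, two edges of $A_e$ sharing a vertex could produce parallel edges or loops that destroy the forest structure of the lift). Once this bookkeeping is carefully set up, the rest of the proof is a straightforward combination of the elementary fact that any forest on $d$ labeled vertices extends to a spanning tree with the counting inequality extracted from the assumption that $H$ is not a hypertree.
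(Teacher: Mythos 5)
Your proof is essentially the paper's own argument: both fix an assignment of each edge of $T$ to a containing hyperedge, observe that the fibres of this assignment are forests supported on the hyperedges (lifted injectively to the position sets $\lle 1,d\rre$), complete them to Cayley trees to obtain a graph reduction $G$ with $E_T\subset E_G$, and then extract the extra edge --- you via the count $|E_G|=\sum_{e\in E_H}\deg e\geq s>s-1=|E_T|$, the paper by noting that $T$ itself cannot be a graph reduction of a non-hypertree, which is the same inequality in disguise. One caveat shared with the paper's proof: neither argument guarantees that the extra edge projects to a pair outside $E_T$ (when $H$ has a repeated hyperedge the added edge can be a second copy of an edge of $T$), but this plays no role in how the lemma is used afterwards, where the pair $(i,j)$ is in any case summed over all of $\lle 1,s\rre^2$.
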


\begin{proof}
By assumption, each edge $e \in E_T$ is included in at least one hyperedge $h(e) \in E_H$; note that there might be several such hyperedges of $H$ for a given edge of $T$. We can nonetheless choose a map $h : E_T \to E_H$ such that $e \subset h(e)$ for any $e \in E_T$. Fix a hyperedge $f \in E_H$: it can contain multiple occurrences of vertices in $\lle 1,s\rre$, but we can choose a labelling $\{i_1,i_2,\ldots,i_c,\ldots,i_d\}$ of its vertices such that $c=\card(f)$ and $i_1\neq i_2\neq \cdots \neq i_c$ are the distinct vertices of $f$. Then, $h^{-1}(f) \subset E_T$ is a forest on $\{i_1,i_2,\ldots,i_c\}$, because $T$ does not contain a cycle. So, we have for any $f \in E_H$ a (possibly empty) forest $h^{-1}(f)$ on the vertices of $f$. 
The union of these forests is the graph $T$, and $T$ is not a graph reduction of $H$, because this would imply that $H$ is a hypertree. This means that some forest $h^{-1}(f)$ is not the image by the map $\lle 1,d\rre \to \{i_1,i_2,\ldots,i_d\}$ of a Cayley tree on $\lle1,d\rre$. All the forests $h^{-1}(f)$ can be completed by adding edges in order to get Cayley trees, and thus a graph reduction $G$ of $H$; and we have proved that some forest has to be completed in a non trivial way, by adding at least one edge (which might be a loop $\{i,i\}$ if $f$ contains multiple occurrences of a vertex $i$). 
\end{proof}
\medskip

We can now estimate without difficulties the number of pairs $(T,\phi)$ involved in the upper bound on $|\beta|$. Let us fix a Cayley tree $T$ with vertex set $\lle 1,s\rre$ and a pair $(i,j) \in \lle 1,s\rre^2$. We want to count the number of maps $\phi : V \to \lle 1,n\rre$ such that $H_\phi$ is connected, is not a hypertree and admits a graph reduction $G$ with $E_T \sqcup \{i,j\} \subset E_G$. Note that for each edge $\{t,r\} \in E_T$, we can find a pair $(c_t,c_r) \in \lle 1,k_t\rre \times \lle 1,k_r\rre$ such that $\phi_t(c_t)=\phi_r(c_r)$: indeed, this is because each edge $\{t,r\}$ is included in a hyperedge of $H_\pi$. Let us also fix these elements $c_r$; there are
$$(k_1)^{\deg(1,T)}\,(k_2)^{\deg(2,T)}\cdots (k_s)^{\deg(s,T)}$$
possibilities for these choices.
\begin{lemma}
With $T$, $(i,j)$ and $\{(c_t,c_r)\,|\,\{t,r\} \in E_T\}$ fixed as above, the number of compatible maps $\phi$ is smaller than 
$$
\begin{cases}
\frac{(k_i)^2}{2}\,n^{k_1+k_2+\cdots+k_s-s}&\text{if }i=j,\\
k_ik_j\,n^{k_1+\cdots+k_2+k_s-s}&\text{if }i\neq j.
\end{cases}
$$
\end{lemma}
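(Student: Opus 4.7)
The plan is to reduce the count to a union bound over possible ``witness'' pairs $(a_i,a_j)$ for the extra identification encoded by the edge $\{i,j\}$, after first establishing a forest structure for the tree constraints on $V$.

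First I would show that the lift $F_T$ of $T$ to $V$, obtained from the bi-labelling by inserting the edge $\{(t,c_{t}),(r,c_{r})\}$ for each $e=\{t,r\}\in E_T$, is a \emph{forest} with exactly $s-1$ edges. Indeed, each tree edge lifts to a unique $V$-edge via its bi-labels, so a cycle of length $\ell$ in $F_T$ would project to a closed walk of length $\ell$ in $T$ using $\ell$ distinct tree edges; but a closed walk in a tree must repeat each edge an even number of times, contradiction. Hence $F_T$ has $k_1+\cdots+k_s-s+1$ connected components, and any $\phi$ satisfying the tree constraints is constant on each of them, giving exactly $n^{k_1+\cdots+k_s-s+1}$ maps subject only to the tree constraints.

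Next I would argue that for any compatible $\phi$, the extra edge $\{i,j\}$ must come from a hyperedge $f^*$ of $H_{\pi(\phi)}$ whose corresponding part $\pi^*$ of $\pi(\phi)$ contains elements $(i,a_i)$ and $(j,a_j)$ (with $a_i\ne a_j$ when $i=j$) satisfying $\phi_i(a_i)=\phi_j(a_j)$. Moreover, the Cayley tree chosen for $\pi^*$ in the graph reduction must contain both the new edge $\{(i,a_i),(j,a_j)\}$ and all the $F_T$-edges that lie inside $\pi^*$ (since those are precisely the tree edges routed through $f^*$). The key observation is that if $(i,a_i)$ and $(j,a_j)$ belonged to the same component $C$ of $F_T$, then the unique $F_T$-path between them lies entirely in $C \subset \pi^*$; projecting to $\lle 1,s\rre$ this path gives a path in $T$ between $i$ and $j$, so $E_T(\pi^*) \cup \{\{i,j\}\}$ would contain a cycle, preventing a Cayley tree extension. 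Hence every valid witness $(a_i,a_j)$ puts $(i,a_i)$ and $(j,a_j)$ in \emph{different} components of $F_T$.

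Finally, applying a union bound over such valid witnesses, the count of compatible $\phi$ is bounded by the sum over such pairs of $|\{\phi : \text{tree constraints and } \phi_i(a_i)=\phi_j(a_j)\}|$. For each valid pair the additional identification strictly merges two distinct components of $F_T$, reducing the component count by one, so each term equals $n^{k_1+\cdots+k_s-s}$. The number of valid pairs is at most $k_ik_j$ when $i\ne j$, and at most $\binom{k_i}{2}\le k_i^2/2$ when $i=j$ (the factor $1/2$ reflecting that the edge $\{(i,a),(i,a')\}$ is unordered), which yields the stated bound.

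The main subtlety is the cycle-obstruction argument in the second step: one has to verify carefully that whenever $(i,a_i)$ and $(j,a_j)$ share an $F_T$-component, the resulting path in $T$ is forced to lie inside $E_T(\pi^*)$ — this relies both on the fact that $\pi(\phi)$ is a coarsening of the tree-induced partition on $V$, and on the uniqueness of lifts of tree edges via the chosen bi-labels.
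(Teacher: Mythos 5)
Your overall strategy (lift $T$ to a forest $F_T$ on $V$, observe that $\phi$ is constant on its components, and union-bound over the witnesses of one extra identification joining two distinct components) is the same as the paper's, and your first and third steps are fine. The gap is in your second step. You assert that the Cayley tree chosen for $\pi^*$ in the graph reduction must contain all the $F_T$-edges lying inside $\pi^*$, "since those are precisely the tree edges routed through $f^*$". This is not true: an edge $\{t,r\}$ of $T$ only has to appear somewhere in $E_G$, and it may be realised by a Cayley-tree edge sitting in a \emph{different} part of $\pi(\phi)$ (any part containing both a $(t,\cdot)$ and an $(r,\cdot)$), or by a different pair of lifts inside $\pi^*$ than $\{(t,c_t),(r,c_r)\}$. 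Without that containment the cycle obstruction evaporates; and note that even the projected cycle you invoke would not by itself forbid a Cayley tree, since the projection $V\to\lle 1,s\rre$ is not injective on a part and the image of a Cayley tree may perfectly well contain cycles — only a cycle \emph{inside the Cayley tree itself} would be contradictory.

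The conclusion you need can in fact fail for an arbitrary choice of the witnesses $(c_t,c_r)$. Take $s=3$, $k_1=k_2=k_3=2$, $T$ the path with edges $\{1,2\}$ and $\{2,3\}$, all witnesses equal to $1$ (so $F_T$ is the path joining $(1,1)$, $(2,1)$, $(3,1)$), $(i,j)=(1,3)$, and $\pi$ with parts $\{(1,1),(2,1),(3,1)\}$, $\{(1,2),(2,2)\}$, $\{(3,2)\}$: the graph reduction consisting of the Cayley tree with edges $\{(1,1),(3,1)\}$ and $\{(3,1),(2,1)\}$ on the first part, together with the edge of the second part, contains $E_T\sqcup\{1,3\}$; yet the only admissible witness pair for $\{1,3\}$ is $\big((1,1),(3,1)\big)$, and these two vertices lie in the \emph{same} component of $F_T$. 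The repair is to tie the witnesses to the graph reduction: for each $e=\{t,r\}\in E_T$, let $\{(t,c_t),(r,c_r)\}$ be the Cayley-tree edge of $G$ realising $e$. Then $F_T$ together with the extra edge $\{(i,a_i),(j,a_j)\}$ consists of $s$ distinct edges of the forest $\bigsqcup_{\pi^*}T_{\pi^*}$, so their union is acyclic and the extra edge automatically joins two distinct components of $F_T$. Since the outer sum runs over all witness systems, every compatible $\phi$ is still captured by at least one (tied) choice, and the bound of the surrounding proposition is unaffected.
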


\begin{proof}
Let us first draw the graph with vertex set $V = \{(r,a_r)\,|\,r\in \lle 1,s\rre,\,\,a_r \in \lle 1,k_r\rre\}$ and with one edge between $(t,c_t)$ and $(r,c_r)$ for each edge $\{t,r\} \in E_T$. This is a forest $C=\bigsqcup_{\gamma \in \Gamma} C_\gamma$ on $V=\bigsqcup_{r=1}^s V_{F_r}$, and for each tree $C_\gamma$ of this forest, all the vertices of $C_\gamma$ are by hypothesis sent to the same image by $\phi$. Moreover, the image of $C$ by the map $(r,a_r)\mapsto r$ is the tree $T$; see Figure \ref{fig:forest_c_associated_to_t} for an example with $s=6$ and $k_1=k_2=\cdots=k_s=3$.
\begin{figure}[ht]
\begin{center}
\begin{tikzpicture}[scale=1]
\draw (0,0) -- (0,-2);
\draw (0,-1) -- (1,-1);
\draw (1,0) -- (1,-2);
\foreach \x in {(0,0),(0,-1),(0,-2),(1,0),(1,-1),(1,-2)}
\fill \x circle (2pt);
\draw (-0.3,0) node {$1$};
\draw (-0.3,-1) node {$2$};
\draw (-0.3,-2) node {$3$};
\draw (1.3,0) node {$4$};
\draw (1.3,-1) node {$5$};
\draw (1.3,-2) node {$6$};
\draw (0.5,-2.5) node {$T$};
\begin{scope}[shift={(4,0)}]
\filldraw [fill=white!90!black] (-0.3,-0.3) rectangle (2.3,0.3);
\filldraw [fill=white!90!black, shift={(0,-1)}] (-0.3,-0.3) rectangle (2.3,0.3);
\filldraw [fill=white!90!black, shift={(0,-2)}] (-0.3,-0.3) rectangle (2.3,0.3);
\filldraw [fill=white!90!black, shift={(4,0)}] (-0.3,-0.3) rectangle (2.3,0.3);
\filldraw [fill=white!90!black, shift={(4,-1)}] (-0.3,-0.3) rectangle (2.3,0.3);
\filldraw [fill=white!90!black, shift={(4,-2)}] (-0.3,-0.3) rectangle (2.3,0.3);
\foreach \x in {0,1,2,4,5,6}
{\foreach \y in {0,-1,-2}
\fill (\x,\y) circle (2pt);};
\draw (0,0) -- (1,-1) -- (0,-2);
\draw (2,-1) -- (4,-1);
\draw (5,0) -- (6,-1) -- (5,-2);
\draw (3,-2.5) node {$C$};
\end{scope}
\end{tikzpicture}
\end{center}
\caption{Forest $C$ associated to $T$ and to a fixed set $\{(c_t,c_r)\,|\,\{t,r\} \in E_T\}$.\label{fig:forest_c_associated_to_t}}
\end{figure}
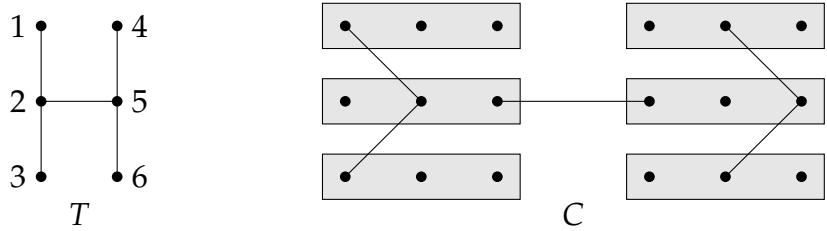
Let us replace each tree $C_\gamma$ by a part containing all the vertices of $C_\gamma$. We then obtain a set partition $\pi'$ which is finer than the set partition $\pi = \pi(\phi)$, see Figure \ref{fig:pi_prime} (by finer we mean that each part of $\pi'$ is included in a part of $\pi$; we denote this by $\pi' \preceq \pi$).
\begin{figure}[ht]
\begin{center}
\begin{tikzpicture}[scale=1]
\filldraw [fill=white!90!black] (-0.3,-0.3) rectangle (2.3,0.3);
\filldraw [fill=white!90!black, shift={(0,-1)}] (-0.3,-0.3) rectangle (2.3,0.3);
\filldraw [fill=white!90!black, shift={(0,-2)}] (-0.3,-0.3) rectangle (2.3,0.3);
\filldraw [fill=white!90!black, shift={(4,0)}] (-0.3,-0.3) rectangle (2.3,0.3);
\filldraw [fill=white!90!black, shift={(4,-1)}] (-0.3,-0.3) rectangle (2.3,0.3);
\filldraw [fill=white!90!black, shift={(4,-2)}] (-0.3,-0.3) rectangle (2.3,0.3);
\draw [red, thick] (2,-1) -- (4,-1);
\filldraw [red,fill=red!50!white,thick] (0,0) -- (1,-1) -- (0,-2) -- (0.5,-1) -- (0,0);
\filldraw [red,fill=red!50!white,thick] (5,0) -- (6,-1) -- (5,-2) -- (5.5,-1) -- (5,0);
\foreach \x in {0,1,2,4,5,6}
{\foreach \y in {0,-1,-2}
\fill (\x,\y) circle (2pt);};
\end{tikzpicture}
\end{center}
\caption{The set partition $\pi'$.\label{fig:pi_prime}}
\end{figure}
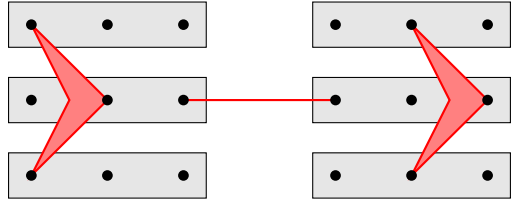
Now, because of the inclusion $E_T \sqcup \{i,j\} \subset E_G$ for some graph reduction $G$ of the hypergraph $H_\pi$, the set partition $\pi'$ is strictly finer than $\pi=\pi(\phi)$, and more precisely:
\begin{itemize}
    \item If $i =j$, then there must be two elements $d_i\neq d_i' \in \lle 1,k_i\rre$ which are not in the same part of $\pi'$, but which are in the same part of $\pi$.
    \item If $i\neq j$, then there must be two elements $d_i \in \lle 1,k_i\rre$ and $d_j \in \lle 1,k_j\rre$ which are not in the same part of $\pi'$, but which are in the same part of $\pi$. 
\end{itemize}
Hence, we can construct another set partition $\pi''$ of $V$ such that $\pi'\preceq \pi'' \preceq \pi$ and such that $\pi''$ is obtained from $\pi'$ by putting in the same part $d_i \neq d_i'$ in the case $i=j$, and by putting in the same part $d_i \neq d_j$ in the case $i \neq j$. Note then that the number of parts of $\pi'$ (including singletons) is $k_1+\cdots+k_s-(s-1)$, and therefore that the number of parts of $\pi''$ is $k_1+\cdots+k_s-s$. There are always less than $\frac{(k_i)^2}{2}$ or $k_ik_j$ possibilities for $\pi'$, and any map $\phi$ such that $\pi'' \preceq \pi(\phi)$ is determined by $n^{k_1+\cdots+k_s-s}$ images, whence the result.
\end{proof}

\begin{proposition}\label{prop:upper_bound_beta}
For any graphon $\gamma$ and any finite graphs $F_1,\ldots,F_s$ with sizes $k_1,\ldots,k_s \geq 2$,
\begin{align*}
&\left|  \sum_{\substack{\pi \in \pym(V) \\ H_{\pi} \text{ is connected} \\ H_{\pi} \text{ is not a hypertree} }}\sum_{\substack{\phi_1 : \lle 1,k_1 \rre \to \lle 1,n\rre \\ \vdots \qquad\vdots \vspace*{1mm}\\ \phi_s : \lle 1,k_s\rre \to \lle 1,n\rre \\ \pi(\phi) = \pi}} \kappa(A_{\phi_1}(F_1),\ldots,A_{\phi_s}(F_s)) \right| \\
&\leq 2^{s-2} (k_1+\cdots+k_s)^{s}\,k_1k_2\cdots k_s\,n^{k_1+\cdots+k_s-s}.
\end{align*}
\end{proposition}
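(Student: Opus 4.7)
The plan is to bound $|\beta|$ by reducing it to a pure counting problem and then executing the count using the two preceding lemmas together with the degree-weighted Kirchhoff identity \eqref{eq:MT3}. By the cumulant-versus-spanning-trees estimate already used to bound $\alpha$, namely
\[
|\kappa(A_{\phi_1}(F_1),\ldots,A_{\phi_s}(F_s))| \leq 2^{s-1}\,|\ST_{H_{\pi(\phi)}}|,
\]
the quantity in question is at most $2^{s-1}$ times the number of pairs $(T,\phi)$, where $\phi : V \to \lle 1, n\rre$ is a map such that $H_{\pi(\phi)}$ is connected but not a hypertree, and $T$ is a tree on $\lle 1, s\rre$ all of whose edges lie in hyperedges of $H_{\pi(\phi)}$.

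Next, I would attach witness data to each such pair. By Lemma \ref{lem:create_a_cycle}, there exists an (unordered) pair $\{i,j\} \subset \lle 1, s\rre$ (with $i = j$ allowed, corresponding to a loop coming from a hyperedge of $H_{\pi(\phi)}$ containing a repeated vertex) and a graph reduction of $H_{\pi(\phi)}$ whose edge set contains $E_T \sqcup \{i,j\}$. Moreover, since every edge $\{t,r\} \in E_T$ is contained in some hyperedge of $H_{\pi(\phi)}$, one can pick labels $(c_t, c_r) \in \lle 1, k_t\rre \times \lle 1, k_r\rre$ with $\phi_t(c_t) = \phi_r(c_r)$. The number of pairs $(T, \phi)$ is therefore bounded above by the number of quadruples $(T, \{(c_t, c_r)\}_{\{t,r\} \in E_T}, \{i,j\}, \phi)$ in which the last three items are compatible with $T$ in this precise sense.

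Finally, I would carry out the count in layers. For a fixed Cayley tree $T$ on $\lle 1, s\rre$, the number of admissible edge-labellings is $\prod_{r=1}^s (k_r)^{\deg(r, T)}$. For each fixed $(T,\text{labelling},\{i,j\})$, the counting lemma preceding this statement bounds the number of compatible maps $\phi$ by $\frac{k_i^2}{2}\,n^{k_1+\cdots+k_s-s}$ in the case $i = j$ and by $k_i k_j\,n^{k_1+\cdots+k_s-s}$ in the case $i \neq j$. Summing these bounds over unordered pairs $\{i,j\}$ gives
\[
\sum_{i=1}^s \frac{k_i^2}{2} + \sum_{1\leq i<j\leq s} k_i k_j = \frac{1}{2}(k_1+\cdots+k_s)^2,
\]
while the identity \eqref{eq:MT3} yields
\[
\sum_{T \in \ST(K_s)} \prod_{r=1}^s (k_r)^{\deg(r, T)} = k_1 k_2 \cdots k_s\,(k_1+\cdots+k_s)^{s-2}.
\]
Multiplying together the three factors and the prefactor $2^{s-1}$ produces exactly $2^{s-2}(k_1+\cdots+k_s)^s\,k_1\cdots k_s\,n^{k_1+\cdots+k_s-s}$, as claimed. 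The only delicate point in this bookkeeping is the justification that $\{i,j\}$ may genuinely be treated as unordered, so that the factor $\tfrac{1}{2}$ appears and the final constant drops from $2^{s-1}$ to the announced $2^{s-2}$; this is built into the construction in the proof of Lemma \ref{lem:create_a_cycle}, since the extra edge needed to complete some forest $h^{-1}(f)$ into a Cayley tree on the multiset of vertices of $f$ is intrinsically an unordered (graph) edge, possibly a loop.
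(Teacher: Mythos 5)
Your proposal is correct and follows essentially the same route as the paper: bound each elementary cumulant by $2^{s-1}\,|\ST_{H_{\pi(\phi)}}|$ via the dependency-graph lemma, use Lemma \ref{lem:create_a_cycle} to attach an extra (unordered, possibly degenerate) pair $\{i,j\}$ as a witness that $H_{\pi(\phi)}$ is not a hypertree, invoke the counting lemma for the compatible maps $\phi$, and sum via $\sum_i \frac{k_i^2}{2}+\sum_{i<j}k_ik_j=\frac{1}{2}(k_1+\cdots+k_s)^2$ together with \eqref{eq:MT3}. Your closing remark on why the witness pair is genuinely unordered (hence the factor $\frac12$ turning $2^{s-1}$ into $2^{s-2}$) is a correct reading of the construction and matches the paper's bookkeeping.
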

\begin{proof}
The previous discussion shows that
\begin{align*}
|\beta| &\leq 2^{s-1}\,n^{k_1+\cdots+k_s-s} \left(\sum_{i=1}^s \frac{(k_i)^2}{2} + \sum_{i \neq j} k_ik_j\right)\left(\sum_{T \in \ST(K_s))} (k_1)^{\deg(1,T)} \cdots (k_s)^{\deg(s,T)}\right)\\
&\leq 2^{s-2}\,n^{k_1+\cdots+k_s-s}\,(k_1+\cdots+k_s)^2 \sum_{T \in \ST(K_s))}(k_1)^{\deg(1,T)}\cdots  (k_s)^{\deg(s,T)},
\end{align*}
and we conclude again by using the formula for the generating function of the degrees of the Cayley trees.
\end{proof}

Combining Propositions \ref{prop:upper_bound_alpha} and \ref{prop:upper_bound_beta}, we get:
\begin{theorem}\label{thm:upper_bound}
For any singular graphon $\gamma$ and any finite graphs $F_1,\ldots,F_s$ with sizes $k_1,\ldots,k_s \geq 2$,
$$|\kappa(S_n(F_1),\ldots,S_n(F_s))| \leq 2^{s-1}\,(k_1+\cdots+k_s)^{s}\,k_1k_2\cdots k_s\,n^{k_1+\cdots+k_s-s}.$$
\end{theorem}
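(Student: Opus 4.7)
The plan is to assemble the theorem from the two propositions already established in this section, using Theorem \ref{thm:additional_equations} as the bridge. First I would start from the multilinearity expansion of the joint cumulant and partition the sum according to the hypergraph $H_{\pi(\phi)}$ associated to each family of maps $\phi$. By Lemma \ref{lem:connected_hypergraph}, only the $\pi$ such that $H_\pi$ is a connected hypergraph contribute, and I would then split this remaining sum as $\alpha+\beta$, where $\alpha$ collects the contributions of $\pi$ with $H_\pi$ a hypertree and $\beta$ collects those with $H_\pi$ connected but not a hypertree.

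For $\alpha$, by Proposition \ref{prop:expansion_cumulant} I can rewrite
\[
\alpha = \sum_{\pi \in \pym_{\mathrm{hypertree}}(F_1,\ldots,F_s)} n^{\downarrow k_1+\cdots+k_s-(s-1)}\,\kappa_\pi(F_1,\ldots,F_s)(\gamma).
\]
The key input is Theorem \ref{thm:additional_equations}: for a singular graphon, $\kappa_s(F_1,\ldots,F_s)(\gamma)=\sum_\pi \kappa_\pi(F_1,\ldots,F_s)(\gamma)=0$. This lets me subtract $n^{k_1+\cdots+k_s-(s-1)}\,\kappa_s(F_1,\ldots,F_s)(\gamma)=0$ from $\alpha$ at no cost, so that
\[
\alpha = -\!\!\!\!\sum_{\pi \in \pym_{\mathrm{hypertree}}(F_1,\ldots,F_s)} \!\!\!\!\left(n^{k_1+\cdots+k_s-(s-1)} - n^{\downarrow k_1+\cdots+k_s-(s-1)}\right) \kappa_\pi(F_1,\ldots,F_s)(\gamma),
\]
and Proposition \ref{prop:upper_bound_alpha} directly yields $|\alpha| \leq 2^{s-2}(k_1+\cdots+k_s)^s\,k_1\cdots k_s\,n^{k_1+\cdots+k_s-s}$.

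For $\beta$, Proposition \ref{prop:upper_bound_beta} gives exactly the same bound without any further input. Adding the two estimates and collecting the factors of $2$ produces the advertised constant $2^{s-1}$, completing the proof. There is no real obstacle left at this stage: the combinatorial work (the bijection with bi-edge-labelled Cayley trees and the generalised Kirchhoff identity \eqref{eq:MT3}, together with the graph-reduction argument of Lemma \ref{lem:create_a_cycle}) has all been carried out in the two propositions, and the singular assumption enters only through Theorem \ref{thm:additional_equations} to kill the otherwise leading $n^{k_1+\cdots+k_s-(s-1)}$ term.
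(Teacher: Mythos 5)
Your proposal is correct and follows essentially the same route as the paper: split the cumulant into the hypertree part $\alpha$ and the non-hypertree connected part $\beta$, use Theorem \ref{thm:additional_equations} to replace $\alpha$ by the difference $-\sum_\pi (n^{K}-n^{\downarrow K})\kappa_\pi(\gamma)$ with $K=k_1+\cdots+k_s-(s-1)$, and then invoke Propositions \ref{prop:upper_bound_alpha} and \ref{prop:upper_bound_beta}, whose two bounds of $2^{s-2}(k_1+\cdots+k_s)^{s}k_1\cdots k_s\,n^{k_1+\cdots+k_s-s}$ add up to the stated constant $2^{s-1}$. Nothing is missing.
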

\medskip

\subsection{Asymptotics of the graph densities in the singular case}
Given a singular graphon $\gamma$ and a finite graph $F$ with size $k$, recall that
$$Y_n(F) = n(t(F,G_n(\gamma)) - \esper[t(F,G_n(\gamma))]) = \frac{S_n(F)-\esper[S_n(F)]}{n^{k-1}}.$$
The discussion from Subsection \ref{sub:moments_graph_densities} shows that any joint moment of the variables $S_n(F)$ is a polynomial in $n$; the same result holds for the joint cumulants. When $\gamma$ is singular, Theorem \ref{thm:upper_bound} shows that for $s \geq 2$, the polynomial $\kappa(S_n(F_1),\ldots,S_n(F_s))$ is of degree smaller than $|V_{F_1}|+\cdots+|V_{F_s}|-s$, and with a leading term smaller in absolute value than $2^{s-1}\,(k_1+\cdots+k_s)^{s}\,k_1k_2\cdots k_s$. After rescaling the random variables $S_n(F)$, we therefore get:
\begin{corollary}
For  any singular graphon $\gamma$ and any finite graphs $F_1,\ldots,F_s$ with sizes $k_1,\ldots,k_s \geq 2$, $\kappa(Y_n(F_1),\ldots,Y_n(F_s))$ is a polynomial in the variable $n^{-1}$, and we have the upper bound:
$$|\kappa(Y_n(F_1),\ldots,Y_n(F_s))| \leq 2^{s-1}\,(k_1+\cdots+k_s)^{s}\,k_1k_2\cdots k_s$$
for any $n \geq 1$. In particular, there is a limit $\kappa_\infty(F_1,\ldots,F_s) = \lim_{n\to \infty} \kappa(Y_n(F_1),\ldots,Y_n(F_s))$ which is smaller in absolute value than $2^{s-1}\,(k_1+\cdots+k_s)^{s}\,k_1k_2\cdots k_s$.
\end{corollary}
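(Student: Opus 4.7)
The plan is to derive this corollary as a short and direct consequence of Theorem~\ref{thm:upper_bound}, combined with the polynomial structure of the joint moments of the $S_n(F_i)$'s observed in Section~\ref{sec:moments}.

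First I would exploit multilinearity of joint cumulants together with their translation-invariance in each argument (which holds for $s \geq 2$). Writing $Y_n(F_i) = n^{1-k_i}(S_n(F_i) - \esper[S_n(F_i)])$, constants disappear from any joint cumulant of order $\geq 2$ and the scaling factors pull out, yielding
$$\kappa(Y_n(F_1),\ldots,Y_n(F_s)) = \frac{\kappa(S_n(F_1),\ldots,S_n(F_s))}{n^{k_1+\cdots+k_s-s}}.$$

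Next I would observe that $P(n) := \kappa(S_n(F_1),\ldots,S_n(F_s))$ is a polynomial in $n$. Indeed, Proposition~\ref{prop:moment} and its joint-moment extension expand each $\esper[\prod_{r \in J} S_n(F_r)]$ as a linear combination, with $n$-independent coefficients of the form $t(F\downarrow \pi,\gamma)$, of falling factorials $n^{\downarrow \ell(\pi)}$, which are polynomials in $n$. The Möbius inversion formula then expresses the joint cumulant as a polynomial combination of joint moments, so $P \in \R[X]$.

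Then I would feed this into Theorem~\ref{thm:upper_bound}: for every integer $n \geq 1$,
$$|P(n)| \leq 2^{s-1}(k_1+\cdots+k_s)^{s}\,k_1 k_2 \cdots k_s\, n^{k_1+\cdots+k_s-s}.$$
Any polynomial in $n$ bounded by a fixed multiple of $n^d$ on an infinite set of integers has degree at most $d$; hence $\deg P \leq k_1+\cdots+k_s-s$. Dividing by $n^{k_1+\cdots+k_s-s}$, $\kappa(Y_n(F_1),\ldots,Y_n(F_s))$ is therefore a polynomial in $n^{-1}$ of degree at most $k_1+\cdots+k_s-s$, and the claimed uniform bound transfers with the same constant. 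The limit $\kappa_\infty(F_1,\ldots,F_s)$ is then the constant term of this polynomial in $n^{-1}$, equivalently the coefficient of $n^{k_1+\cdots+k_s-s}$ in $P(n)$, and inherits the same absolute bound by passing to the limit. There is no real obstacle here: all the work has already been absorbed into Theorem~\ref{thm:upper_bound}, the key singular cumulant estimate proved in the preceding subsection.
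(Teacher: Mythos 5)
Your proposal is correct and follows essentially the same route as the paper: the joint cumulant of the $S_n(F_i)$'s is a polynomial in $n$ (from the moment expansions of Section \ref{sec:moments} and Möbius inversion), Theorem \ref{thm:upper_bound} forces its degree to be at most $k_1+\cdots+k_s-s$ with the stated bound, and dividing by $n^{k_1+\cdots+k_s-s}$ via multilinearity and translation invariance of cumulants yields the polynomial in $n^{-1}$, the uniform bound, and the limit.
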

\medskip

We can finally prove the Main Theorem. The generating series $\log (\esper[\E^{z_1Y_n(F_1)+\cdots+z_sY_n(F_s)}])$ of the cumulants is for any $n\geq 1$ absolutely convergent on the polydisk
$$\left\{|z_1|<  \frac{1}{2s\E K^2},\ldots,|z_s|< \frac{1}{2s\E K^2} \right\}$$
with $K = \max\{k_1,\ldots,k_s\}$. Indeed, if $|Z| = \max\{|z_1|,\ldots,|z_s|\}$, then:
\begin{align*}
&\sum_{r=2}^\infty \sum_{\substack{r_1,r_2,\ldots,r_s \geq 0 \\ r_1+r_2+\cdots+r_s=r}} \frac{|z_1|^{r_1}|z_2|^{r_2}\cdots |z_s|^{r_s}}{r_1!\,r_2!\cdots r_s!}\,\,\kappa\!\left((Y_n(F_1))^{\otimes r_1},\ldots,(Y_n(F_s))^{\otimes r_s}\right)\\
&\leq \sum_{r=2}^\infty \sum_{\substack{r_1,r_2,\ldots,r_s \geq 0 \\ r_1+r_2+\cdots+r_s=r}} \frac{(2|Z|)^{r}}{r_1!\cdots r_s!} \,(r_1k_1+\cdots+r_sk_s)^{r} (k_1)^{r_1}\cdots (k_s)^{r_s} \\
&\leq \sum_{r=2}^\infty \sum_{\substack{r_1,r_2,\ldots,r_s \geq 0 \\ r_1+r_2+\cdots+r_s=r}} \frac{(2K^2|Z|r)^{r}}{r_1!\cdots r_s!} \leq \sum_{r=2}^\infty (2\E K^2|Z|)^{r} \left(\sum_{\substack{r_1,r_2,\ldots,r_s \geq 0 \\ r_1+r_2+\cdots+r_s=r}}  \frac{r!}{r_1!\cdots r_s!}\right) = \sum_{r=2}^\infty (2s\E K^2|Z|)^{r} ,
\end{align*}
which is convergent if $|Z|<\frac{1}{2s\E K^2}$. All the coefficients of the series converge when $n$ goes to infinity. Therefore, we have uniform convergence on a polydisk of the joint Laplace transform of the variables $Y_n(F_1),\ldots,Y_n(F_s)$ towards a power series which is also absolutely convergent on this polydisk. This implies the convergence in distribution, the limit being a random vector whose law is determined by its joint moments or its joint cumulants (see for instance \cite{Bil95}*{p.~390}).
\medskip

We cannot identify the distribution of the limiting random variables $Y_\infty(F)$, but the strong upper bound on cumulants implies a concentration inequality:
\begin{proposition}
For any singular graphon $\gamma$, any finite graph $F$ on $k$ vertices, any $x >0$ and any $n \geq 1$,
$$\proba[|Y_n(F)| \geq \E k^2 x] \leq 2 \exp\left(\sqrt{1+x}-1-\frac{x}{2}\right),$$
and the same inequality holds for the limit in distribution $Y_\infty(F)$.
\end{proposition}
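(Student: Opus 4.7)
The plan is to derive the inequality from the cumulant bound of Theorem \ref{thm:upper_bound} via a standard Chernoff optimisation. Applying that theorem with $s=r$ and $F_1=\cdots=F_r=F$ gives $|\kappa^{(r)}(S_n(F))| \leq 2^{r-1}(rk)^r k^r\,n^{rk-r}$ for every $r \geq 2$; dividing by $n^{r(k-1)}$ and using $r^r \leq \E^r\,r!$ (which follows from $\E^r = \sum_{j\geq 0} r^j/j! \geq r^r/r!$) rescales this to a Bernstein-type bound
$$|\kappa^{(r)}(Y_n(F))| \leq \tfrac{1}{2}\,A^r\,r!, \qquad A = 2\E k^2,$$
uniform in $n \geq 1$.

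First I would sum the cumulant generating series: on $u \in [0,1/A)$, absolute convergence yields
$$\log \esper[\E^{u\,Y_n(F)}] \,=\, \sum_{r\geq 2} \frac{u^r}{r!}\,\kappa^{(r)}(Y_n(F)) \,\leq\, \frac{(Au)^2}{2(1-Au)}.$$
Second, I would apply Markov's inequality with threshold $t = \E k^2\,x$: after the change of variables $v = Au$, so that $ut = vx/2$, the Chernoff exponent becomes $-vx/2 + v^2/(2(1-v))$, to be minimised over $v \in (0,1)$. The first order condition simplifies to $1/(1-v)^2 = 1+x$, giving $v^* = 1-1/\sqrt{1+x}$, and substitution produces the optimal value $-(\sqrt{1+x}-1)^2/2 = \sqrt{1+x} - 1 - x/2$. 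Since $\kappa^{(r)}(-Y_n(F)) = (-1)^r\,\kappa^{(r)}(Y_n(F))$, the same cumulant bound applies to the lower tail, and a union bound produces the factor $2$ in the statement.

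To extend the inequality to $Y_\infty(F)$, I would observe that the locally uniform convergence of the Laplace transforms $\esper[\E^{u\,Y_n(F)}]$ on a disk around $0$---which follows from the proof of the Main Theorem together with the Bernstein bound above---transfers the estimate on the MGF to the limiting moment generating function. The same Chernoff optimisation then produces the claimed inequality for $Y_\infty(F)$.

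The main obstacle is purely computational: the explicit minimisation of the Chernoff exponent and the verification that $-(\sqrt{1+x}-1)^2/2$ equals $\sqrt{1+x}-1-x/2$. No substantive new idea is required beyond Theorem \ref{thm:upper_bound}; the rest is the standard geometric summation of a Bernstein-type cumulant bound followed by the Chernoff change of variables.
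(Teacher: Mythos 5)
Your proposal is correct and follows essentially the same route as the paper: the corollary of Theorem \ref{thm:upper_bound} specialised to $F_1=\cdots=F_r=F$, the bound $r^r\leq \E^r\,r!$ to obtain the Bernstein-type estimate $\frac{1}{2}(2\E k^2 t)^r$ on $\kappa^{(r)}(Y_n(F))\,t^r/r!$, geometric summation to $\frac{\lambda^2t^2}{2(1-\lambda t)}$, and the Chernoff optimisation at $t=\frac{1}{\lambda}\bigl(1-(1+x)^{-1/2}\bigr)$. Your explicit verification that the optimal exponent equals $\sqrt{1+x}-1-\frac{x}{2}$ and your MGF-convergence argument for $Y_\infty(F)$ match (and slightly flesh out) what the paper leaves implicit.
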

\begin{proof}
We prove an inequality for $\proba[Y_n(F) \geq y]$ with $\E k^2 x = y>0$; the case of $\proba[Y_n(F) \leq -y]$ is symmetric. By Chernoff's inequality, for any $t \geq 0$,
\begin{align*}
\log (\proba[Y_n(F) \geq y])& \leq -ty + \log (\esper[\E^{tY_n(F)}]) \leq -ty + \sum_{r=2}^\infty \frac{\kappa^{(r)}(Y_n(F))}{r!}\,t^r \\
&\leq -ty + \frac{1}{2}\sum_{r=2}^\infty (2\E k^2 t)^{r} = -ty + \frac{\lambda^2 t^2 }{2(1-\lambda t)}
\end{align*}
with $\lambda = 2\E k^2$ and $t < \frac{1}{\lambda}$. The optimal choice is 
$$t = \frac{1}{\lambda} \left(1-\frac{1}{\sqrt{1+\frac{2y}{\lambda}}}\right),$$ and replacing in the inequality above yields the result.
\end{proof}
\medskip

\subsection{Gaussian characterisation of the constant graphons}
By using the equations established in Section \ref{sec:spectral}, let us compute explicitly in terms of the observables of a singular graphon $\gamma$ with edge density $p$ the third and fourth cumulants of the random edge count $S_n(\edgegraph)$. By using Proposition \ref{prop:moment}, we can compute $\esper[S_n(\edgegraph^r)]$ for $r \in \{2,3,4\}$. More precisely, let us enumerate the set partitions $\pi$ which yield loopless graphs $(\edgegraph^r)\downarrow \pi$ according to their type $t(\pi)$, which is an integer partition of size $2r$ without part of size larger than $r$ (if we put more than $r+1$ elements in a common part, then the contracted graph has necessarily a loop). Each integer partition corresponds to several possible contractions, which are not always isomorphic. The table hereafter lists all the possible contracted graphs and their multiplicities when $r=2$. \\
\begin{table}[ht]
\begin{center}
\begin{tabular}{|c|c|c|}
\hline $t(\pi)$ & contracted graph & multiplicity \\
\hline\hline $(1^4)$ & $\begin{tikzpicture}[baseline=1mm,scale=0.5]
\foreach \x in {(0,0),(1,0),(0,0.75),(1,0.75)}
\fill \x circle (4pt);
\draw (0,0) -- (1,0);
\draw (0,0.75) -- (1,0.75);
\end{tikzpicture}$ & $1$\\[2mm]
\hline $(2\,1^2)$ & $\begin{tikzpicture}[baseline=1mm,scale=0.5]
\filldraw [black,fill=white!90!black] (-0.2,-0.2) rectangle (0.2,0.95);
\foreach \x in {(0,0),(1,0),(0,0.75),(1,0.75)}
\fill \x circle (4pt);
\draw (0,0) -- (1,0);
\draw (0,0.75) -- (1,0.75);
\begin{scope}[shift={(3,0)}]
\draw (-1,0) node {$\rightarrow$};
\foreach \x in {(0,0),(1,0),(2,0)}
\fill \x circle (4pt);
\draw (0,0) -- (2,0);
\end{scope}
\end{tikzpicture}$ & $4$\\[2mm]
\hline $(2^2)$ & $\begin{tikzpicture}[baseline=1mm,scale=0.5]
\filldraw [black,fill=white!90!black] (-0.2,-0.2) rectangle (0.2,0.95);
\filldraw [black,fill=white!90!black] (0.8,-0.2) rectangle (1.2,0.95);
\foreach \x in {(0,0),(1,0),(0,0.75),(1,0.75)}
\fill \x circle (4pt);
\draw (0,0) -- (1,0);
\draw (0,0.75) -- (1,0.75);
\begin{scope}[shift={(3,0)}]
\draw (-1,0) node {$\rightarrow$};
\foreach \x in {(0,0),(1,0)}
\fill \x circle (4pt);
\draw (0,0) -- (1,0);
\end{scope}
\end{tikzpicture}$ & $2$ \\[2mm]
\hline 
\end{tabular}
\vspace*{3mm}

\end{center}
\caption{List of contracted graphs when $r=2$.}\label{tab:r2}
\end{table}

\noindent This implies:
\begin{align*}
\esper[S_n(\edgegraph^2)] &= n^{\downarrow 4}\,t(\edgegraph,\gamma)^2 + 4\,n^{\downarrow 3}\,t(\begin{tikzpicture}[baseline=-1mm,scale=0.5]
\foreach \x in {(0,0),(1,0),(2,0)}
\fill \x circle (4pt);
\draw (0,0) -- (2,0);
\end{tikzpicture},\gamma) + 2\,n^{\downarrow 2}\,t(\edgegraph,\gamma) \\
&= n^{\downarrow 3}(n+1)\,p^2 + 2n^{\downarrow 2}\,p;\\
\kappa^{(2)}(Y_n(\edgegraph)) &= 2\,p(1-p)\left(1-\frac{1}{n}\right).
\end{align*}
Similarly, for $r=3$, the possible contractions are listed in Table \ref{tab:r3}. By using Corollary \ref{cor:tree_densities}, we can compute all the densities in $\gamma$ of these contracted graphs , except the density of the triangle $C_3$. The relation between moments and cumulants leads then to:

\begin{align*}
\esper[S_n(\edgegraph^3)] 
&= n^{\downarrow 4} (n^2 +3n +4 ) p^3 + 6 n^{\downarrow 3}(n+1)p^2 + 4n^{\downarrow 2}\,p + 8n^{\downarrow 3}\,t(C_3,\gamma);\\
\kappa^{(3)}(S_n(\edgegraph)) &= 8\,n^{\downarrow 3}\,(t(C_3,\gamma) -p^3)  + 4\,n^{\downarrow 2} p(p-1)(2p-1);\\
\kappa^{(3)}(Y_n(\edgegraph)) &= 8\,(t(C_3,\gamma) -p^3) + O\!\left(\frac{1}{n}\right).
\end{align*}

\begin{table}[ht]
\begin{center}
\begin{tabular}{|c|c|c|}
\hline $t(\pi)$ & contracted graph & multiplicity \\
\hline\hline $(1^6)$ & $\begin{tikzpicture}[baseline=1mm,scale=0.5]
\foreach \x in {(0,0),(1,0),(0,0.75),(1,0.75),(0,-0.75),(1,-0.75)}
\fill \x circle (4pt);
\draw (0,0) -- (1,0);
\draw (0,0.75) -- (1,0.75);
\draw (0,-0.75) -- (1,-0.75);
\end{tikzpicture}$ & $1$ \\[5mm]
\hline $(2\,1^4)$ & $\begin{tikzpicture}[baseline=1mm,scale=0.5]
\filldraw [black,fill=white!90!black] (-0.2,-0.2) rectangle (0.2,0.95);
\foreach \x in {(0,0),(1,0),(0,0.75),(1,0.75),(0,-0.75),(1,-0.75)}
\fill \x circle (4pt);
\draw (0,0) -- (1,0);
\draw (0,0.75) -- (1,0.75);
\draw (0,-0.75) -- (1,-0.75);
\begin{scope}[shift={(3,0)}]
\draw (-1,0) node {$\rightarrow$};
\foreach \x in {(0,0.75),(1,0.75),(2,0.75),(1,0),(0,0)}
\fill \x circle (4pt);
\draw (0,0) -- (1,0);
\draw (0,0.75) -- (2,0.75);
\end{scope}
\end{tikzpicture}$ & $12$\\[5mm]
\hline $(3\,1^3)$ & $\begin{tikzpicture}[baseline=1mm,scale=0.5]
\filldraw [black,fill=white!90!black] (-0.2,-0.95) rectangle (0.2,0.95);
\foreach \x in {(0,0),(1,0),(0,0.75),(1,0.75),(0,-0.75),(1,-0.75)}
\fill \x circle (4pt);
\draw (0,0) -- (1,0);
\draw (0,0.75) -- (1,0.75);
\draw (0,-0.75) -- (1,-0.75);
\begin{scope}[shift={(3,0)}]
\draw (-1,0) node {$\rightarrow$};
\foreach \x in {(0,0),(1,0.75),(1,0),(2,0)}
\fill \x circle (4pt);
\draw (0,0) -- (2,0);
\draw (1,0.75) -- (1,0);
\end{scope}
\end{tikzpicture}$ & $8$\\[5mm]
\hline $(2^2\,1^2)$ & $\begin{tikzpicture}[baseline=1mm,scale=0.5]
\filldraw [black,fill=white!90!black] (-0.2,-0.2) rectangle (0.2,0.95);
\filldraw [black,fill=white!90!black] (0.8,-0.2) rectangle (1.2,0.95);
\foreach \x in {(0,0),(1,0),(0,0.75),(1,0.75),(0,-0.75),(1,-0.75)}
\fill \x circle (4pt);
\draw (0,0) -- (1,0);
\draw (0,0.75) -- (1,0.75);
\draw (0,-0.75) -- (1,-0.75);
\begin{scope}[shift={(3,0)}]
\draw (-1,0) node {$\rightarrow$};
\foreach \x in {(0,0),(1,0),(0,0.75),(1,0.75)}
\fill \x circle (4pt);
\draw (0,0) -- (1,0);
\draw (0,0.75) -- (1,0.75);
\end{scope}
\end{tikzpicture}$ & $6$\\[5mm]
\hline $(2^2\,1^2)$ & $\begin{tikzpicture}[baseline=1mm,scale=0.5]
\filldraw [black,fill=white!90!black] (-0.2,-0.2) rectangle (0.2,0.95);
\filldraw [black,fill=white!90!black] (0.8,-0.95) rectangle (1.2,0.2);
\foreach \x in {(0,0),(1,0),(0,0.75),(1,0.75),(0,-0.75),(1,-0.75)}
\fill \x circle (4pt);
\draw (0,0) -- (1,0);
\draw (0,0.75) -- (1,0.75);
\draw (0,-0.75) -- (1,-0.75);
\begin{scope}[shift={(3,0)}]
\draw (-1,0) node {$\rightarrow$};
\foreach \x in {(0,0),(1,0),(2,0),(3,0)}
\fill \x circle (4pt);
\draw (0,0) -- (3,0);
\end{scope}
\end{tikzpicture}$ & $24$ \\[5mm]
\hline $(3\,2\,1)$& $\begin{tikzpicture}[baseline=1mm,scale=0.5]
\filldraw [black,fill=white!90!black] (-0.2,-0.95) rectangle (0.2,0.95);
\filldraw [black,fill=white!90!black] (0.8,-0.2) rectangle (1.2,0.95);
\foreach \x in {(0,0),(1,0),(0,0.75),(1,0.75),(0,-0.75),(1,-0.75)}
\fill \x circle (4pt);
\draw (0,0) -- (1,0);
\draw (0,0.75) -- (1,0.75);
\draw (0,-0.75) -- (1,-0.75);
\begin{scope}[shift={(3,0)}]
\draw (-1,0) node {$\rightarrow$};
\foreach \x in {(0,0),(1,0),(2,0)}
\fill \x circle (4pt);
\draw (0,0) -- (2,0);
\end{scope}
\end{tikzpicture}$ & $24$\\[5mm]
\hline $(2^3)$ & $\begin{tikzpicture}[baseline=1mm,scale=0.5]
\filldraw [black,fill=white!90!black] (-0.2,-0.2) rectangle (0.2,0.95);
\filldraw [black,fill=white!90!black] (0.8,-0.95) rectangle (1.2,0.2);
\filldraw [black,fill=white!90!black] (-0.2,-0.55) -- (0.4,-0.55) -- (0.4,0.95) -- (1.2,0.95) -- (1.2,0.55) -- (0.6,0.55) -- (0.6,-0.95) -- (-0.2,-0.95) -- (-0.2,-0.55);
\foreach \x in {(0,0),(1,0),(0,0.75),(1,0.75),(0,-0.75),(1,-0.75)}
\fill \x circle (4pt);
\draw (0,0) -- (1,0);
\draw (0,0.75) -- (1,0.75);
\draw (0,-0.75) -- (1,-0.75);
\begin{scope}[shift={(3,0)}]
\draw (-1,0) node {$\rightarrow$};
\foreach \x in {(0,0),(1,0),(0.5,0.75)}
\fill \x circle (4pt);
\draw (0,0) -- (1,0) -- (0.5,0.75) -- (0,0);
\end{scope}
\end{tikzpicture}$ & $8$\\[5mm]
\hline $(3^2)$ & $\begin{tikzpicture}[baseline=1mm,scale=0.5]
\filldraw [black,fill=white!90!black] (-0.2,-0.95) rectangle (0.2,0.95);
\filldraw [black,fill=white!90!black] (0.8,-0.95) rectangle (1.2,0.95);
\foreach \x in {(0,0),(1,0),(0,0.75),(1,0.75),(0,-0.75),(1,-0.75)}
\fill \x circle (4pt);
\draw (0,0) -- (1,0);
\draw (0,0.75) -- (1,0.75);
\draw (0,-0.75) -- (1,-0.75);
\begin{scope}[shift={(3,0)}]
\draw (-1,0) node {$\rightarrow$};
\foreach \x in {(0,0),(1,0)}
\fill \x circle (4pt);
\draw (0,0) -- (1,0);
\end{scope}
\end{tikzpicture}$ & $4$\\[5mm]
\hline 
\end{tabular}

\vspace*{3mm}
\end{center}
\caption{List of contracted graphs when $r=3$.}\label{tab:r3}
\end{table}

Finally, for $r=4$, the possible contractions and their multiplicities are listed in Table \ref{tab:r4}. Note that since the motive $\begin{tikzpicture}[baseline=-1mm,scale=0.5]
\foreach \x in {(0,0),(1,0),(2,0.5),(2,-0.5)}
\fill \x circle (4pt);
\draw (0,0) -- (1,0) -- (2,0.5) -- (2,-0.5) -- (1,0);
\end{tikzpicture}\,$ is join-transitive, we have
$$t\big(\begin{tikzpicture}[baseline=-1mm,scale=0.5]
\foreach \x in {(0,0),(1,0),(2,0.5),(2,-0.5)}
\fill \x circle (4pt);
\draw (0,0) -- (1,0) -- (2,0.5) -- (2,-0.5) -- (1,0);
\end{tikzpicture},\,\gamma\big) = p\,\times \,t(C_3,\gamma).$$\\

\begin{table}
\begin{center}
\begin{tabular}{|c|c|c||c|c|c|}
\hline $t(\pi)$ & contracted graph & multiplicity & $t(\pi)$ & contracted graph & multiplicity \\
\hline\hline $(1^8)$ & $\begin{tikzpicture}[baseline=1mm,scale=0.5]
\foreach \x in {(0,0),(1,0),(0,0.75),(1,0.75),(0,-0.75),(1,-0.75),(0,-1.5),(1,-1.5)}
\fill \x circle (4pt);
\draw (0,0) -- (1,0);
\draw (0,0.75) -- (1,0.75);
\draw (0,-0.75) -- (1,-0.75);
\draw (0,-1.5) -- (1,-1.5);
\end{tikzpicture}$ & $1$ & $(4\,2\,1^2)$ & $\begin{tikzpicture}[baseline=1mm,scale=0.5]
\filldraw [black,fill=white!90!black] (-0.2,-1.7) rectangle (0.2,0.95);
\filldraw [black,fill=white!90!black] (0.8,-0.2) rectangle (1.2,0.95);
\foreach \x in {(0,0),(1,0),(0,0.75),(1,0.75),(0,-0.75),(1,-0.75),(0,-1.5),(1,-1.5)}
\fill \x circle (4pt);
\draw (0,0) -- (1,0);
\draw (0,0.75) -- (1,0.75);
\draw (0,-0.75) -- (1,-0.75);
\draw (0,-1.5) -- (1,-1.5);
\begin{scope}[shift={(3,0)}]
\draw (-1,0) node {$\rightarrow$};
\foreach \x in {(0,0),(1,0),(2,0),(1,-0.75)}
\fill \x circle (4pt);
\draw (0,0) -- (2,0);
\draw (1,0) -- (1,-0.75);
\end{scope}
\end{tikzpicture}$ & $96$ \\[9mm]
\hline $(2\,1^6)$ & $\begin{tikzpicture}[baseline=1mm,scale=0.5]
\filldraw [black,fill=white!90!black] (-0.2,-0.2) rectangle (0.2,0.95);
\foreach \x in {(0,0),(1,0),(0,0.75),(1,0.75),(0,-0.75),(1,-0.75),(0,-1.5),(1,-1.5)}
\fill \x circle (4pt);
\draw (0,0) -- (1,0);
\draw (0,0.75) -- (1,0.75);
\draw (0,-0.75) -- (1,-0.75);
\draw (0,-1.5) -- (1,-1.5);
\begin{scope}[shift={(3,0)}]
\draw (-1,0) node {$\rightarrow$};
\foreach \x in {(0,0),(1,0),(0,0.75),(1,0.75),(2,0.75),(0,-0.75),(1,-0.75)}
\fill \x circle (4pt);
\draw (0,0) -- (1,0);
\draw (0,0.75) -- (2,0.75);
\draw (0,-0.75) -- (1,-0.75);
\end{scope}
\end{tikzpicture}$ & $24$ & $(3^2\,1^2)$ & $\begin{tikzpicture}[baseline=1mm,scale=0.5]
\filldraw [black,fill=white!90!black] (-0.2,-0.95) rectangle (0.2,0.95);
\filldraw [black,fill=white!90!black] (0.8,-0.95) rectangle (1.2,0.95);
\foreach \x in {(0,0),(1,0),(0,0.75),(1,0.75),(0,-0.75),(1,-0.75),(0,-1.5),(1,-1.5)}
\fill \x circle (4pt);
\draw (0,0) -- (1,0);
\draw (0,0.75) -- (1,0.75);
\draw (0,-0.75) -- (1,-0.75);
\draw (0,-1.5) -- (1,-1.5);
\begin{scope}[shift={(3,0)}]
\draw (-1,0) node {$\rightarrow$};
\foreach \x in {(0,0),(1,0),(0,-0.75),(1,-0.75)}
\fill \x circle (4pt);
\draw (0,0) -- (1,0);
\draw (0,-0.75) -- (1,-0.75);
\end{scope}
\end{tikzpicture}$ & $16$ \\[9mm]
\hline $(3\,1^5)$ & $\begin{tikzpicture}[baseline=1mm,scale=0.5]
\filldraw [black,fill=white!90!black] (-0.2,-0.95) rectangle (0.2,0.95);
\foreach \x in {(0,0),(1,0),(0,0.75),(1,0.75),(0,-0.75),(1,-0.75),(0,-1.5),(1,-1.5)}
\fill \x circle (4pt);
\draw (0,0) -- (1,0);
\draw (0,0.75) -- (1,0.75);
\draw (0,-0.75) -- (1,-0.75);
\draw (0,-1.5) -- (1,-1.5);
\begin{scope}[shift={(3,0)}]
\draw (-1,0) node {$\rightarrow$};
\foreach \x in {(0,0),(1,0),(2,0),(1,0.75),(0,-0.75),(1,-0.75)}
\fill \x circle (4pt);
\draw (0,0) -- (2,0);
\draw (1,0) -- (1,0.75);
\draw (0,-0.75) -- (1,-0.75);
\end{scope}
\end{tikzpicture}$ & $32$ & $(3^2\,1^2)$ & $\begin{tikzpicture}[baseline=1mm,scale=0.5]
\filldraw [black,fill=white!90!black] (-0.2,-0.95) rectangle (0.2,0.95);
\filldraw [black,fill=white!90!black] (0.8,-1.7) rectangle (1.2,0.2);
\foreach \x in {(0,0),(1,0),(0,0.75),(1,0.75),(0,-0.75),(1,-0.75),(0,-1.5),(1,-1.5)}
\fill \x circle (4pt);
\draw (0,0) -- (1,0);
\draw (0,0.75) -- (1,0.75);
\draw (0,-0.75) -- (1,-0.75);
\draw (0,-1.5) -- (1,-1.5);
\begin{scope}[shift={(3,0)}]
\draw (-1,0) node {$\rightarrow$};
\foreach \x in {(0,0),(1,0),(2,0),(3,0)}
\fill \x circle (4pt);
\draw (0,0) -- (3,0);
\end{scope}
\end{tikzpicture}$ & $96$ \\[9mm]
\hline $(2^2\,1^4)$ & $\begin{tikzpicture}[baseline=1mm,scale=0.5]
\filldraw [black,fill=white!90!black] (-0.2,-0.2) rectangle (0.2,0.95);
\filldraw [black,fill=white!90!black] (0.8,-0.2) rectangle (1.2,0.95);
\foreach \x in {(0,0),(1,0),(0,0.75),(1,0.75),(0,-0.75),(1,-0.75),(0,-1.5),(1,-1.5)}
\fill \x circle (4pt);
\draw (0,0) -- (1,0);
\draw (0,0.75) -- (1,0.75);
\draw (0,-0.75) -- (1,-0.75);
\draw (0,-1.5) -- (1,-1.5);
\begin{scope}[shift={(3,0)}]
\draw (-1,0) node {$\rightarrow$};
\foreach \x in {(0,0),(1,0),(0,0.75),(1,0.75),(0,-0.75),(1,-0.75)}
\fill \x circle (4pt);
\draw (0,0) -- (1,0);
\draw (0,0.75) -- (1,0.75);
\draw (0,-0.75) -- (1,-0.75);
\end{scope}
\end{tikzpicture}$ & $12$ & $(3\,2^2\,1)$ & $\begin{tikzpicture}[baseline=1mm,scale=0.5]
\filldraw [black,fill=white!90!black] (-0.2,-0.95) rectangle (0.2,0.95);
\filldraw [black,fill=white!90!black] (0.8,-0.2) rectangle (1.2,0.95);
\filldraw [black,fill=white!90!black] (0.8,-1.7) rectangle (1.2,-0.55);
\foreach \x in {(0,0),(1,0),(0,0.75),(1,0.75),(0,-0.75),(1,-0.75),(0,-1.5),(1,-1.5)}
\fill \x circle (4pt);
\draw (0,0) -- (1,0);
\draw (0,0.75) -- (1,0.75);
\draw (0,-0.75) -- (1,-0.75);
\draw (0,-1.5) -- (1,-1.5);
\begin{scope}[shift={(3,0)}]
\draw (-1,0) node {$\rightarrow$};
\foreach \x in {(0,0),(1,0),(2,0),(3,0)}
\fill \x circle (4pt);
\draw (0,0) -- (3,0);
\end{scope}
\end{tikzpicture}$ & $192$ \\[9mm]
\hline $(2^2\,1^4)$ & $\begin{tikzpicture}[baseline=1mm,scale=0.5]
\filldraw [black,fill=white!90!black] (-0.2,-0.2) rectangle (0.2,0.95);
\filldraw [black,fill=white!90!black] (0.8,-0.95) rectangle (1.2,0.2);
\foreach \x in {(0,0),(1,0),(0,0.75),(1,0.75),(0,-0.75),(1,-0.75),(0,-1.5),(1,-1.5)}
\fill \x circle (4pt);
\draw (0,0) -- (1,0);
\draw (0,0.75) -- (1,0.75);
\draw (0,-0.75) -- (1,-0.75);
\draw (0,-1.5) -- (1,-1.5);
\begin{scope}[shift={(3,0)}]
\draw (-1,0) node {$\rightarrow$};
\foreach \x in {(0,0),(1,0),(0,-0.75),(1,-0.75),(2,0),(3,0)}
\fill \x circle (4pt);
\draw (0,0) -- (3,0);
\draw (0,-0.75) -- (1,-0.75);
\end{scope}
\end{tikzpicture}$ & $96$ & $(3\,2^2\,1)$ & $\begin{tikzpicture}[baseline=1mm,scale=0.5]
\filldraw [black,fill=white!90!black] (-0.2,-0.95) rectangle (0.2,0.95);
\filldraw [black,fill=white!90!black] (-0.2,-1.3) -- (0.4,-1.3) -- (0.4,0.2) -- (1.2,0.2) -- (1.2,-0.2) -- (0.6,-0.2) -- (0.6,-1.7) -- (-0.2,-1.7) -- (-0.2,-1.3);
\filldraw [black,fill=white!90!black] (0.8,-1.7) rectangle (1.2,-0.55);
\foreach \x in {(0,0),(1,0),(0,0.75),(1,0.75),(0,-0.75),(1,-0.75),(0,-1.5),(1,-1.5)}
\fill \x circle (4pt);
\draw (0,0) -- (1,0);
\draw (0,0.75) -- (1,0.75);
\draw (0,-0.75) -- (1,-0.75);
\draw (0,-1.5) -- (1,-1.5);
\begin{scope}[shift={(3,0)}]
\draw (-1,0) node {$\rightarrow$};
\foreach \x in {(0,0),(1,0),(2,0.5),(2,-0.5)}
\fill \x circle (4pt);
\draw (0,0) -- (1,0) -- (2,0.5) -- (2,-0.5) -- (1,0);
\end{scope}
\end{tikzpicture}$ & $192$ \\[9mm]
\hline $(2^2\,1^4)$ & $\begin{tikzpicture}[baseline=1mm,scale=0.5]
\filldraw [black,fill=white!90!black] (-0.2,-0.2) rectangle (0.2,0.95);
\filldraw [black,fill=white!90!black] (-0.2,-1.7) rectangle (0.2,-0.55);
\foreach \x in {(0,0),(1,0),(0,0.75),(1,0.75),(0,-0.75),(1,-0.75),(0,-1.5),(1,-1.5)}
\fill \x circle (4pt);
\draw (0,0) -- (1,0);
\draw (0,0.75) -- (1,0.75);
\draw (0,-0.75) -- (1,-0.75);
\draw (0,-1.5) -- (1,-1.5);
\begin{scope}[shift={(3,0)}]
\draw (-1,0) node {$\rightarrow$};
\foreach \x in {(0,0),(1,0),(2,0),(2,-0.75),(0,-0.75),(1,-0.75)}
\fill \x circle (4pt);
\draw (0,0) -- (2,0);
\draw (0,-0.75) -- (2,-0.75);
\end{scope}
\end{tikzpicture}$ & $48$ & $(2^4)$ & $\begin{tikzpicture}[baseline=1mm,scale=0.5]
\filldraw [black,fill=white!90!black] (-0.2,-0.2) rectangle (0.2,0.95);
\filldraw [black,fill=white!90!black] (0.8,-0.2) rectangle (1.2,0.95);
\filldraw [black,fill=white!90!black] (-0.2,-1.7) rectangle (0.2,-0.55);
\filldraw [black,fill=white!90!black] (0.8,-1.7) rectangle (1.2,-0.55);
\foreach \x in {(0,0),(1,0),(0,0.75),(1,0.75),(0,-0.75),(1,-0.75),(0,-1.5),(1,-1.5)}
\fill \x circle (4pt);
\draw (0,0) -- (1,0);
\draw (0,0.75) -- (1,0.75);
\draw (0,-0.75) -- (1,-0.75);
\draw (0,-1.5) -- (1,-1.5);
\begin{scope}[shift={(3,0)}]
\draw (-1,0) node {$\rightarrow$};
\foreach \x in {(0,0),(1,0),(0,-0.75),(1,-0.75)}
\fill \x circle (4pt);
\draw (0,0) -- (1,0);
\draw (0,-0.75) -- (1,-0.75);
\end{scope}
\end{tikzpicture}$ & $12$ \\[9mm]
\hline $(4\,1^4)$ & $\begin{tikzpicture}[baseline=1mm,scale=0.5]
\filldraw [black,fill=white!90!black] (-0.2,-1.7) rectangle (0.2,0.95);
\foreach \x in {(0,0),(1,0),(0,0.75),(1,0.75),(0,-0.75),(1,-0.75),(0,-1.5),(1,-1.5)}
\fill \x circle (4pt);
\draw (0,0) -- (1,0);
\draw (0,0.75) -- (1,0.75);
\draw (0,-0.75) -- (1,-0.75);
\draw (0,-1.5) -- (1,-1.5);
\begin{scope}[shift={(3,0)}]
\draw (-1,0) node {$\rightarrow$};
\foreach \x in {(0,0),(1,0),(2,0),(1,-0.75),(1,0.75)}
\fill \x circle (4pt);
\draw (0,0) -- (2,0);
\draw (1,0.75) -- (1,-0.75);
\end{scope}
\end{tikzpicture}$ & $16$ & $(2^4)$ & $\begin{tikzpicture}[baseline=1mm,scale=0.5]
\filldraw [black,fill=white!90!black] (-0.2,-0.2) rectangle (0.2,0.95);
\filldraw [black,fill=white!90!black] (0.8,-0.95) rectangle (1.2,0.2);
\filldraw [black,fill=white!90!black] (-0.2,-1.7) rectangle (0.2,-0.55);
\filldraw [black,fill=white!90!black] (0.8,-1.7) -- (0.8,-1.3) -- (1.5,-1.3) -- (1.5,0.55) -- (0.8,0.55) -- (0.8,0.95) -- (1.9,0.95) -- (1.9,-1.7) -- (0.8,-1.7);
\foreach \x in {(0,0),(1,0),(0,0.75),(1,0.75),(0,-0.75),(1,-0.75),(0,-1.5),(1,-1.5)}
\fill \x circle (4pt);
\draw (0,0) -- (1,0);
\draw (0,0.75) -- (1,0.75);
\draw (0,-0.75) -- (1,-0.75);
\draw (0,-1.5) -- (1,-1.5);
\begin{scope}[shift={(3.8,0)}]
\draw (-1,0) node {$\rightarrow$};
\foreach \x in {(0,0),(1,0),(0,-0.75),(1,-0.75)}
\fill \x circle (4pt);
\draw (0,0) -- (1,0) -- (1,-0.75) -- (0,-0.75) -- (0,0);
\end{scope}
\end{tikzpicture}$ & $48$ \\[9mm]
\hline $(3\,2\,1^3)$ & $\begin{tikzpicture}[baseline=1mm,scale=0.5]
\filldraw [black,fill=white!90!black] (-0.2,-0.95) rectangle (0.2,0.95);
\filldraw [black,fill=white!90!black] (0.8,-0.2) rectangle (1.2,0.95);
\foreach \x in {(0,0),(1,0),(0,0.75),(1,0.75),(0,-0.75),(1,-0.75),(0,-1.5),(1,-1.5)}
\fill \x circle (4pt);
\draw (0,0) -- (1,0);
\draw (0,0.75) -- (1,0.75);
\draw (0,-0.75) -- (1,-0.75);
\draw (0,-1.5) -- (1,-1.5);
\begin{scope}[shift={(3,0)}]
\draw (-1,0) node {$\rightarrow$};
\foreach \x in {(0,0),(1,0),(2,0),(1,-0.75),(0,-0.75)}
\fill \x circle (4pt);
\draw (0,0) -- (2,0);
\draw (0,-0.75) -- (1,-0.75);
\end{scope}
\end{tikzpicture}$ & $96$ & $(4\,3\,1)$ & $\begin{tikzpicture}[baseline=1mm,scale=0.5]
\filldraw [black,fill=white!90!black] (-0.2,-1.7) rectangle (0.2,0.95);
\filldraw [black,fill=white!90!black] (0.8,-0.95) rectangle (1.2,0.95);
\foreach \x in {(0,0),(1,0),(0,0.75),(1,0.75),(0,-0.75),(1,-0.75),(0,-1.5),(1,-1.5)}
\fill \x circle (4pt);
\draw (0,0) -- (1,0);
\draw (0,0.75) -- (1,0.75);
\draw (0,-0.75) -- (1,-0.75);
\draw (0,-1.5) -- (1,-1.5);
\begin{scope}[shift={(3,0)}]
\draw (-1,0) node {$\rightarrow$};
\foreach \x in {(0,0),(1,0),(2,0)}
\fill \x circle (4pt);
\draw (0,0) -- (2,0);
\end{scope}
\end{tikzpicture}$ & $64$\\[9mm]
\hline $(3\,2\,1^3)$ & $\begin{tikzpicture}[baseline=1mm,scale=0.5]
\filldraw [black,fill=white!90!black] (-0.2,-0.95) rectangle (0.2,0.95);
\filldraw [black,fill=white!90!black] (0.8,-1.7) rectangle (1.2,-0.55);
\foreach \x in {(0,0),(1,0),(0,0.75),(1,0.75),(0,-0.75),(1,-0.75),(0,-1.5),(1,-1.5)}
\fill \x circle (4pt);
\draw (0,0) -- (1,0);
\draw (0,0.75) -- (1,0.75);
\draw (0,-0.75) -- (1,-0.75);
\draw (0,-1.5) -- (1,-1.5);
\begin{scope}[shift={(3,0)}]
\draw (-1,0) node {$\rightarrow$};
\foreach \x in {(0,0),(1,0),(2,0),(3,0),(1,-0.75)}
\fill \x circle (4pt);
\draw (0,0) -- (3,0);
\draw (1,0) -- (1,-0.75);
\end{scope}
\end{tikzpicture}$ & $192$ & $(4\,2^2)$ & $\begin{tikzpicture}[baseline=1mm,scale=0.5]
\filldraw [black,fill=white!90!black] (-0.2,-1.7) rectangle (0.2,0.95);
\filldraw [black,fill=white!90!black] (0.8,-0.2) rectangle (1.2,0.95);
\filldraw [black,fill=white!90!black] (0.8,-1.7) rectangle (1.2,-0.55);
\foreach \x in {(0,0),(1,0),(0,0.75),(1,0.75),(0,-0.75),(1,-0.75),(0,-1.5),(1,-1.5)}
\fill \x circle (4pt);
\draw (0,0) -- (1,0);
\draw (0,0.75) -- (1,0.75);
\draw (0,-0.75) -- (1,-0.75);
\draw (0,-1.5) -- (1,-1.5);
\begin{scope}[shift={(3,0)}]
\draw (-1,0) node {$\rightarrow$};
\foreach \x in {(0,0),(1,0),(2,0)}
\fill \x circle (4pt);
\draw (0,0) -- (2,0);
\end{scope}
\end{tikzpicture}$ & $48$\\[9mm]
\hline $(2^3\,1^2)$ & $\begin{tikzpicture}[baseline=1mm,scale=0.5]
\filldraw [black,fill=white!90!black] (-0.2,-0.2) rectangle (0.2,0.95);
\filldraw [black,fill=white!90!black] (0.8,-0.2) rectangle (1.2,0.95);
\filldraw [black,fill=white!90!black] (-0.2,-1.7) rectangle (0.2,-0.55);
\foreach \x in {(0,0),(1,0),(0,0.75),(1,0.75),(0,-0.75),(1,-0.75),(0,-1.5),(1,-1.5)}
\fill \x circle (4pt);
\draw (0,0) -- (1,0);
\draw (0,0.75) -- (1,0.75);
\draw (0,-0.75) -- (1,-0.75);
\draw (0,-1.5) -- (1,-1.5);
\begin{scope}[shift={(3,0)}]
\draw (-1,0) node {$\rightarrow$};
\foreach \x in {(0,0),(1,0),(2,0),(0,-0.75),(1,-0.75)}
\fill \x circle (4pt);
\draw (0,0) -- (2,0);
\draw (0,-0.75) -- (1,-0.75);
\end{scope}
\end{tikzpicture}$ & $48$ & $(3^2\,2)$ & $\begin{tikzpicture}[baseline=1mm,scale=0.5]
\filldraw [black,fill=white!90!black] (-0.2,-0.95) rectangle (0.2,0.95);
\filldraw [black,fill=white!90!black] (0.8,-1.7) rectangle (1.2,0.2);
\filldraw [black,fill=white!90!black] (-0.2,-1.3) -- (0.4,-1.3) -- (0.4,0.95) -- (1.2,0.95) -- (1.2,0.55) -- (0.6,0.55) -- (0.6,-1.7) -- (-0.2,-1.7) -- (-0.2,-1.3);
\foreach \x in {(0,0),(1,0),(0,0.75),(1,0.75),(0,-0.75),(1,-0.75),(0,-1.5),(1,-1.5)}
\fill \x circle (4pt);
\draw (0,0) -- (1,0);
\draw (0,0.75) -- (1,0.75);
\draw (0,-0.75) -- (1,-0.75);
\draw (0,-1.5) -- (1,-1.5);
\begin{scope}[shift={(3,0)}]
\draw (-1,0) node {$\rightarrow$};
\foreach \x in {(0,0),(1,0),(0.5,0.75)}
\fill \x circle (4pt);
\draw (0,0) -- (1,0) -- (0.5,0.75) -- (0,0);
\end{scope}
\end{tikzpicture}$ & $48$ \\[9mm]
\hline $(2^3\,1^2)$ & $\begin{tikzpicture}[baseline=1mm,scale=0.5]
\filldraw [black,fill=white!90!black] (-0.2,-0.2) rectangle (0.2,0.95);
\filldraw [black,fill=white!90!black] (0.8,-0.95) rectangle (1.2,0.2);
\filldraw [black,fill=white!90!black] (-0.2,-1.7) rectangle (0.2,-0.55);
\foreach \x in {(0,0),(1,0),(0,0.75),(1,0.75),(0,-0.75),(1,-0.75),(0,-1.5),(1,-1.5)}
\fill \x circle (4pt);
\draw (0,0) -- (1,0);
\draw (0,0.75) -- (1,0.75);
\draw (0,-0.75) -- (1,-0.75);
\draw (0,-1.5) -- (1,-1.5);
\begin{scope}[shift={(3,0)}]
\draw (-1,0) node {$\rightarrow$};
\foreach \x in {(0,0),(1,0),(2,0),(3,0),(4,0)}
\fill \x circle (4pt);
\draw (0,0) -- (4,0);
\end{scope}
\end{tikzpicture}$ & $192$ & $(4^2)$ & $\begin{tikzpicture}[baseline=1mm,scale=0.5]
\filldraw [black,fill=white!90!black] (-0.2,-1.7) rectangle (0.2,0.95);
\filldraw [black,fill=white!90!black] (0.8,-1.7) rectangle (1.2,0.95);
\foreach \x in {(0,0),(1,0),(0,0.75),(1,0.75),(0,-0.75),(1,-0.75),(0,-1.5),(1,-1.5)}
\fill \x circle (4pt);
\draw (0,0) -- (1,0);
\draw (0,0.75) -- (1,0.75);
\draw (0,-0.75) -- (1,-0.75);
\draw (0,-1.5) -- (1,-1.5);
\begin{scope}[shift={(3,0)}]
\draw (-1,0) node {$\rightarrow$};
\foreach \x in {(0,0),(1,0)}
\fill \x circle (4pt);
\draw (0,0) -- (1,0);
\end{scope}
\end{tikzpicture}$ & $8$ \\[9mm]
\hline $(2^3\,1^2)$ & $\begin{tikzpicture}[baseline=1mm,scale=0.5]
\filldraw [black,fill=white!90!black] (-0.2,-0.2) rectangle (0.2,0.95);
\filldraw [black,fill=white!90!black] (0.8,-0.95) rectangle (1.2,0.2);
\filldraw [black,fill=white!90!black] (-0.2,-0.55) -- (0.4,-0.55) -- (0.4,0.95) -- (1.2,0.95) -- (1.2,0.55) -- (0.6,0.55) -- (0.6,-0.95) -- (-0.2,-0.95) -- (-0.2,-0.55);
\foreach \x in {(0,0),(1,0),(0,0.75),(1,0.75),(0,-0.75),(1,-0.75),(0,-1.5),(1,-1.5)}
\fill \x circle (4pt);
\draw (0,0) -- (1,0);
\draw (0,0.75) -- (1,0.75);
\draw (0,-0.75) -- (1,-0.75);
\draw (0,-1.5) -- (1,-1.5);
\begin{scope}[shift={(3,0)}]
\draw (-1,0) node {$\rightarrow$};
\foreach \x in {(0,0),(1,0),(0.5,0.75),(0,-0.75),(1,-0.75)}
\fill \x circle (4pt);
\draw (0,0) -- (1,0) -- (0.5,0.75) -- (0,0);
\draw (0,-0.75) -- (1,-0.75);
\end{scope}
\end{tikzpicture}$ & $32$ & & &\\[9mm]
\hline
\end{tabular}

\vspace*{3mm}
\end{center}
\caption{List of contracted graphs when $r=4$.}\label{tab:r4}
\end{table}

\noindent By using a computer algebra system in order to keep track of all the terms, we get:
\begin{align*}
\kappa^{(4)}(S_n(\edgegraph)) &= 48\,n^{\downarrow 4}\,(t(C_4,\gamma) - p^4) + 48\,(1-4p)\,n^{\downarrow 3}\,t(C_3,\gamma) \\
&\quad + 8p\,n^{\downarrow 2}\,(p^3 (24n-54) - 12p^2(n-3) - 7p + 1);\\
\kappa^{(4)}(Y_n(\edgegraph)) &= 48\,(t(C_4,\gamma) - p^4)+O\!\left(\frac{1}{n}\right).
\end{align*}
These equations lead to the following surprising result:
\begin{proposition}\label{prop:gaussian_edge_density}
Let $\gamma$ be a singular graphon with edge density $p$. The following assertions are equivalent:
\begin{enumerate}
    \item The graphon $\gamma$ is the constant graphon $\gamma_p$.
    \item The limit in distribution $Y_\infty(\edgegraph) = \lim_{n \to \infty} n(t(\edgegraph,G_n(\gamma)) - \esper[t(\edgegraph,G_n(\gamma))])$ has a Gaussian distribution.
\item The fourth cumulant of the limit $Y_\infty(\edgegraph)$ vanishes.
\end{enumerate}
\end{proposition}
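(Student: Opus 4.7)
The plan is to read off the three implications directly from the explicit formulas for the limiting cumulants of $Y_n(\edgegraph)$ that were derived just above the proposition, and to invoke the Chung--Graham--Wilson theorem recalled in the introduction for the only non-trivial direction. The implication (1) $\Rightarrow$ (2) is contained in Example \ref{ex:erdos_renyi}: when $\gamma = \gamma_p$, the $W$-random graph is the Erd\H{o}s--R\'enyi model $G(n,p)$, and the asymptotic normality of the scaled vector of subgraph densities is classical (see \cites{Jan88,Now89,FMN16}). The implication (2) $\Rightarrow$ (3) is immediate, since all cumulants of order at least $3$ of a Gaussian random variable vanish.

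The substantive step is (3) $\Rightarrow$ (1). By the fourth cumulant computation performed just above, we have
\[
\kappa^{(4)}(Y_\infty(\edgegraph)) = 48\,\big(t(C_4,\gamma) - p^4\big),
\]
so the hypothesis that this quantity vanishes is exactly the identity $t(C_4,\gamma) = p^4 = (t(\edgegraph,\gamma))^4$. I would then invoke the Chung--Graham--Wilson theorem (\cite{CGW89}; see also \cite{Lov12}*{Section 11.8.1}), which asserts precisely that a graphon $\gamma$ with edge density $p$ is the constant graphon $\gamma_p$ if and only if $t(C_4,\gamma) = p^4$. This closes the loop.

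The only genuine technical step is therefore the fourth cumulant computation itself, which was the main obstacle and has already been carried out. Its difficulty lay in enumerating all the loopless contractions of $\edgegraph^4$ and in controlling the densities of the contracted graphs. The point where Section \ref{sec:spectral} is crucial is that Corollary \ref{cor:tree_densities} and Theorem \ref{thm:simple_equations} force the density of every tree or join-transitive motive appearing in the expansion to factor as a product of edge densities; only the unavoidable cyclic motives $C_3$ and $C_4$ escape this factorisation, and the $C_3$ contribution is absorbed into the $O(n^{-1})$ remainder, so that only the term $48\,(t(C_4,\gamma)-p^4)$ survives in the limit. Hence the proof of the proposition is essentially a one-line application of Chung--Graham--Wilson once this cumulant has been evaluated.
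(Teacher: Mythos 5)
Your proposal is correct and follows essentially the same route as the paper: the implication (3) $\Rightarrow$ (1) via the computed identity $\kappa^{(4)}(Y_\infty(\edgegraph)) = 48\,(t(C_4,\gamma)-p^4)$ combined with the Chung--Graham--Wilson criterion, the implication (1) $\Rightarrow$ (2) via the Janson--Nowicki central limit theorem for $G(n,p)$, and the trivial implication (2) $\Rightarrow$ (3). Your additional remarks on why only the $C_3$ and $C_4$ densities escape the factorisation over join-transitive motives, and why the $C_3$ term disappears in the $O(n^{-1})$ remainder after rescaling, accurately reflect the computation preceding the proposition.
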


\begin{proof}
By the previous computations, the limiting fourth cumulant is proportional to the difference $t(C_4,\gamma)-p^4$, and by the Chung--Graham--Wilson criterion, this difference vanishes if and only if $\gamma=\gamma_p$. In this case, the central limit theorem due to Janson and Nowicki holds.
\end{proof}
\bigskip

\section*{Appendix: general form of the matrix-tree theorem}
In this appendix, we prove the generalisations of the matrix-tree theorem which were used during the proofs of Propositions \ref{prop:expectation_laplacian}, \ref{prop:upper_bound_alpha} and \ref{prop:upper_bound_beta}. The original form of the matrix tree theorem is due to Kirchhoff \cite{Kirch47}: it states that for any finite unoriented graph $G$, the product of the non-zero eigenvalues of the Laplacian matrix $L_G$ is the number of spanning trees $|\ST_G|$. A modern exposition of this result can be found in \cite{Biggs93}*{Chapter 7}. Recently, a combinatorial approach of this result and its generalisations has been made popular by Zeilberger \cite{Zeil85}; see also \cite{KL20} for a version where the entries of the Laplacian matrix are taken in arbitrary rings. In the sequel, we follow the Zeilberger approach in order to establish the Equations  \eqref{eq:MT1}, \eqref{eq:MT2} and \eqref{eq:MT3}.\medskip

As is well known since the works of Tutte \cite{Tut48}, the matrix tree theorems are natural in the setting of \emph{directed} graphs, so in the sequel we exceptionally consider a directed graph $G\dir = (V,E)$ with $V = \lle 1,n\rre$ and $E \subset V^2$ set of pairs $(i \to j)$. We use the index $\mathrm{d}$ in order to indicate that a graph is directed. A \emph{directed spanning tree} of $G\dir$ is a collection $T\dir$ of $n-1$ edges in $E$ which are not loops $(i \to i)$, and such that every vertex in $\lle 1,n\rre$ but one vertex $r$ is the starting point of an edge $(i \to j) \in T\dir$. For instance, if 
$$ G\dir = \begin{tikzpicture}[baseline=0mm,scale=1.5]
\foreach \x in {(0,1),(225:1),(315:1),(0.5,0.1),(-0.5,0.1)}
\fill \x circle (1.5pt);
\draw [->] (180:1) arc (180:0:1);
\draw (0:1) arc (0:-90:1);
\draw [->] (225:1) arc (225:270:1);
\draw [->] (225:1) arc (225:180:1);
\draw [->] (225:1) -- (-0.6,-0.3);
\draw (-0.6,-0.3) -- (-0.5,0.1);
\draw (315:1) -- (0.6,-0.3);
\draw [<-] (0.6,-0.3) -- (0.5,0.1);
\draw [->] (-0.5,0.1) -- (0,0.1);
\draw (0,0.1) -- (0.5,0.1);
\draw [->] (-0.5,0.1) -- (-0.25,0.55);
\draw [->] (-0.25,0.55) -- (0,1) -- (0.25,0.55);
\draw (0.25,0.55) -- (0.5,0.1);
\draw (0,1.2) node {$1$};
\draw (-0.75,-0.9) node {$2$};
\draw (0.75,-0.9) node {$3$};
\draw (-0.7,0.1) node {$5$};
\draw (0.7,0.1) node {$4$};
\end{tikzpicture}\,\,,$$
then a possible directed spanning tree (with root $r=3$) is
$$ T\dir = \begin{tikzpicture}[baseline=0mm,scale=1.5]
\draw [->,Red,thick] (90:1) arc (90:0:1);
\draw [Red,thick] (0:1) arc (0:-45:1);
\draw [->,Red,thick] (225:1) -- (-0.6,-0.3);
\draw [Red,thick] (-0.6,-0.3) -- (-0.5,0.1);
\draw [->,Red,thick] (0.5,0.1) -- (0.6,-0.3);
\draw [Red,thick] (0.6,-0.3) -- (315:1);
\draw [->,Red,thick] (-0.5,0.1) -- (-0.25,0.55);
\draw [->,Red,thick] (-0.25,0.55) -- (0,1);
\draw (0,1.2) node {$1$};
\draw (-0.75,-0.9) node {$2$};
\draw (0.75,-0.9) node {$3$};
\draw (-0.7,0.1) node {$5$};
\draw (0.7,0.1) node {$4$};
\foreach \x in {(0,1),(225:1),(315:1),(0.5,0.1),(-0.5,0.1)}
\fill \x circle (1.5pt);
\end{tikzpicture}\,\,.$$
More generally, we call \emph{directed forest} on $G\dir$ a collection $F\dir$ of edges in $E$ which is a disjoint union of directed spanning trees on subsets of $V$. If $k$ is the number of connected components of $F\dir$ (including the trivial components without edges), then its number of edges is $n-k$. \medskip

Given a set of commutative variables $(a_{ij})_{i, j}$ and a directed graph $G\dir=(V,E)$ with vertex set $V=\lle 1,n\rre$, we define the \emph{weight} of $G\dir$ by
$$\wt(G\dir) = \prod_{(i \to j) \in E} a_{ij}.$$
For instance, the two previous graphs yield $\wt(G\dir) = a_{13}a_{14}a_{21}a_{23}a_{25}a_{43}a_{51}a_{54}$ and $\wt(T\dir)=a_{13}a_{25}a_{43}a_{51}$. Given a \emph{cycle} $C\dir$, that is a directed graph on a set $\{i_1,i_2,\ldots,i_l\}$ with edges $(i_1 \to i_2)$, $(i_2\to i_3)$, \emph{etc.}, $(i_l \to i_1)$, the \emph{modified weight} of $C\dir$ is:
$$\mwt(C\dir) = -\wt(C\dir) = - a_{i_1i_2}a_{i_2i_3}\cdots a_{i_li_1}.$$
By convention, if $l=1$, then $\wt(C\dir)=a_{i_1i_1}$ and $\mwt(C\dir)=-a_{i_1i_1}$. More generally, given a disjoint union of cycles $C\dir = C_{1,\mathrm{d}} \sqcup C_{2,\mathrm{d}} \sqcup \cdots \sqcup C_{r,\mathrm{d}}$, we set
$$\mwt(C\dir) = \prod_{i=1}^r \mwt(C_{i,\mathrm{d}}) = (-1)^r \,\wt(C\dir).$$
Note that we have:
\begin{align*}
\det((-a_{ij})_{1\leq i,j\leq n}) &= \sum_{\sigma \in \sym(n)} (-1)^{c(\sigma)} \left(\prod_{C\dir \text{ cycle of }\sigma} \wt(C\dir)\right) \\
&=\sum_{\sigma \in \sym(n)}  \left(\prod_{C\dir \text{ cycle of }\sigma} \mwt(C\dir)\right)= \sum_{\sigma \in \sym(n)} \mwt(\sigma).
\end{align*}
More generally, if $(d_i)_{1\leq i \leq n}$ is a set of diagonal coefficients, then
\begin{align*}
\det((\delta_{ij}d_i-a_{ij})_{1\leq i,j\leq n}) &= \sum_{\sigma \in \sym(n)} \left(\prod_{\substack{C\dir \text{ cycle of }\sigma \\ \ell(C\dir)\geq 2}} \mwt(C\dir)\right)\left(\prod_{i \text{ fixed point of }\sigma} d_i-a_{ii}\right)\\
&= \sum_{\substack{V_H \subset \lle 1,n\rre \\ H\dir\text{ union of cycles on }V_H}} \mwt(H\dir)\left(\prod_{i \notin V_H} d_i\right).
\end{align*}
Given commutative variables $z_1,\ldots,z_n,\delta_1,\ldots,\delta_n$, let us consider the special case of the identity above where:
\begin{itemize}
     \item $A_{G\dir}(z)=(a_{ij})_{1\leq i,j\leq n}$ is the marked adjacency matrix of a loopless directed graph $G\dir=(V,E)$: $a_{ij}=z_iz_j$ if $(i \to j) \in E$, $a_{ij}=0$ if $(i \to j)\notin E$, and $E$ does not contain a loop $(i \to i)$, so $a_{ii}=0$;
     \item $d_i=\delta_i + \sum_{j \neq i}a_{ij}$.
 \end{itemize}  
 The matrix 
 $$L_{G\dir}(z) = \begin{pmatrix}
\sum_{j \neq 1}a_{1j} & -a_{12} & \cdots & - a_{1n} \\
 -a_{21} & \sum_{j \neq 2}a_{2j} & \ddots & \vdots \\
 \vdots & \ddots & \ddots & -a_{(n-1)n}\\
 -a_{n1} & \cdots & -a_{n(n-1)} & \sum_{j \neq n}a_{nj}
 \end{pmatrix}$$
 is the \emph{marked directed Laplacian} of $G\dir$, and we are thus interested in $\det (\Delta+L_{G\dir}(z))$ for a general diagonal matrix $\Delta$. Note that given a union of cycles $H\dir$ on a subset $V_H$ of $\lle 1,n\rre$, we have $\mwt(H\dir)=0$ if $H\dir$ contains a cycle of length $1$. Otherwise, $$\mwt(H\dir)=(-1)^{\text{number of cycles of $H\dir$}}\,\prod_{i \in V_H}(z_i)^2.$$ So,
 $$\det(\Delta+L_{G\dir}(z)) = \sum_{\substack{V_H \subset \lle 1,n\rre \\ H\dir\text{ union of cycles of length $\geq 2$ on }V_H}} (-1)^{\text{number of cycles of $H\dir$}}\left(\prod_{i \in V_H}z_i\right)^{\!2}\left(\prod_{i \notin V_H} d_i\right).$$
 The product of diagonal elements expands as follows:
 $$\prod_{i \notin V_H} d_i = \sum_{\substack{E_{K\dir} \text{ set of edges $(k_1\to k_2)$ with distinct} \\ \text{starting points $k_1$ in a set }S_K \subset \lle 1,n\rre \setminus V_H}} \left(\prod_{(k_1 \to k_2) \in E_{K\dir}} z_{k_1}z_{k_2}\right)\left(\prod_{i \notin V_H \cup S_K} \delta_i\right).$$
 So, we have an expansion
 $$\det(\Delta+L_{G\dir}(z)) = \sum_{(H\dir,K\dir)} (-1)^{\text{number of cycles of }H\dir} \left(\prod_{i \in V_H}z_i\right)^{\!2}\left(\prod_{(k_1 \to k_2) \in E_{K\dir}} z_{k_1}z_{k_2}\right)\left(\prod_{i \notin V_H \cup S_K} \delta_i\right),$$
where the pairs $(H\dir,K\dir)$ run over some set $\mathcal{G}$ of pairs of directed subgraphs of $G\dir$ that consist in disjoint edges. This set $\mathcal{G}$ of pairs of subgraphs is endowed with the following involution $\phi$: given a pair $(H\dir,K\dir)$, if at least one cycle appears in $H\dir$ or in $K\dir$, we consider the cycle which contains the smallest element, and we move it from $H\dir$ to $K\dir$ or from $K\dir$ to $H\dir$. We convene that $\phi(H\dir,K\dir) = (H\dir,K\dir)$ if $H\dir = \emptyset$ and $K\dir$ does not contain any cycle. Note that the involution $\phi$:
\begin{itemize}
     \item leaves invariant the factor $\prod_{i \notin V_H \cup S_K} \delta_i$;
     \item also leaves invariant the factor $\prod_{i \in V_H}(z_i)^2\,\prod_{(k_1 \to k_2) \in E_{K\dir}} z_{k_1}z_{k_2}$, because we are moving a cycle;
     \item changes the sign $(-1)^{\text{number of cycles of }H\dir}$ if $\phi(H\dir,K\dir) \neq (H\dir,K\dir)$.
 \end{itemize} 
Therefore, in the expansion of $\det(\Delta+L_{G\dir}(z))$, all the terms for which $H\dir \neq \emptyset$ or $K\dir$ contains a cycle cancel one another. We thus obtain:
$$\det (\Delta+L_{G\dir}(z)) = \sum_{\substack{E_{K\dir} \text{ set of edges $(k_1 \to k_2)$ with} \\ \text{distinct starting points $k_1$ in a set}\\ S_K \subset \lle 1,n\rre,\text{ and without cycle}}} \left(\prod_{(k_1 \to k_2) \in E_{K\dir}} z_{k_1}z_{k_2}\right)\left(\prod_{i \notin S_K} \delta_i\right).$$
Now, a set of directed edges of $G\dir$ without cycle and with distinct starting points is the same as a directed forest, and the $i$'s which are not starting points are exactly the roots of the trees that form the forest. Thus, we obtain:
$$\det (\Delta+L_{G\dir}(z)) = \sum_{F\dir \text{ directed forest on }G\dir} \left(\prod_{(T\dir,r) \text{ rooted tree in } F\dir}\left(\delta_r \prod_{(i \to j) \in T\dir} z_iz_j\right)\right).$$
Consider the particular case where $G=G\dir$ is not oriented: $(i,j) \in E$ if and only if $(j,i) \in E$. Then, the adjacency matrix of $G$ is symmetric, and on the other hand, given a non-oriented forest on $G$, a choice of orientation of the forest is equivalent to a choice of a root for every tree in the forest. So, for any simple unoriented graph $G = (\lle 1,n\rre,E)$ and any diagonal matrix $\Delta_n=(\delta_1,\ldots,\delta_n)$, we obtain:
\begin{equation}
\det(\Delta_n+L_G(z)) = \sum_{F \text{ forest on }G} \left(\prod_{i=1}^n (z_i)^{\deg(i,F)}\right)\left(\prod_{T \subset F} \left(\sum_{x \in T} \delta_x\right)\right). \label{eq:MT}\tag{MT}
\end{equation}

\begin{example}
Consider the non-oriented simple graph 
$$G = \begin{tikzpicture}[scale=1.2,baseline=5mm]
\foreach \x in {(0,0),(0,1),(1,0),(1,1)}
\fill \x circle (1.5pt);
\draw (0,0) -- (0,1) -- (1,1) -- (1,0) -- (0,1);
\draw (-0.15,-0.15) node {$1$};
\draw (1.15,-0.15) node {$2$};
\draw (1.15,1.15) node {$3$};
\draw (-0.15,1.15) node {$4$};
\end{tikzpicture}\,.$$
A computer algebra system allows one to compute:
\begin{align*}
 &\det (\Delta_4 + L_G(z))\\
  &=\delta_1\delta_2\delta_3\delta_4 + z_1z_4(\delta_1+\delta_4)\delta_2\delta_3 + z_2z_3(\delta_2+\delta_3)\delta_1\delta_4 + z_2z_4(\delta_2+\delta_4)\delta_1\delta_3 \\
 &\quad + z_3z_4(\delta_3+\delta_4)\delta_1\delta_2 + z_1z_2z_3z_4(\delta_1+\delta_4)(\delta_2+\delta_3) + z_1z_2(z_4)^2\delta_3(\delta_1+\delta_2+\delta_4) \\
 &\quad + z_1z_3(z_4)^2\delta_2(\delta_1+\delta_3+\delta_4) + z_2(z_3)^2z_4\delta_1(\delta_2+\delta_3+\delta_4) + (z_2)^2z_3z_4\delta_1(\delta_2+\delta_3+\delta_4) \\
 &\quad + z_2z_3(z_4)^2\delta_1(\delta_2+\delta_3+\delta_4) + z_1z_2(z_3)^2(z_4)^2(\delta_1+\delta_2+\delta_3+\delta_4)\\
 &\quad + z_1(z_2)^2z_3(z_4)^2(\delta_1+\delta_2+\delta_3+\delta_4) + z_1z_2z_3(z_4)^3 (\delta_1+\delta_2+\delta_3+\delta_4) ,
\end{align*}

\noindent and this polynomial is indeed the sum of the contributions of the $14$ directed forests:
\vspace*{2mm}
\begin{align*}
&\begin{tikzpicture}[scale=0.66,baseline=2mm]
\foreach \x in {(0,0),(0,1),(1,0),(1,1)}
\fill \x circle (3pt);
\end{tikzpicture}\qquad\qquad
\begin{tikzpicture}[scale=0.66,baseline=2mm]
\foreach \x in {(0,0),(0,1),(1,0),(1,1)}
\fill \x circle (3pt);
\draw (0,0) -- (0,1);
\end{tikzpicture}\qquad\qquad
\begin{tikzpicture}[scale=0.66,baseline=2mm]
\foreach \x in {(0,0),(0,1),(1,0),(1,1)}
\fill \x circle (3pt);
\draw (1,1) -- (1,0);
\end{tikzpicture}\qquad\qquad
\begin{tikzpicture}[scale=0.66,baseline=2mm]
\foreach \x in {(0,0),(0,1),(1,0),(1,1)}
\fill \x circle (3pt);
\draw (0,1) -- (1,0);
\end{tikzpicture}\qquad\qquad
\begin{tikzpicture}[scale=0.66,baseline=2mm]
\foreach \x in {(0,0),(0,1),(1,0),(1,1)}
\fill \x circle (3pt);
\draw (0,1) -- (1,1);
\end{tikzpicture}\qquad\qquad
\begin{tikzpicture}[scale=0.66,baseline=2mm]
\foreach \x in {(0,0),(0,1),(1,0),(1,1)}
\fill \x circle (3pt);
\draw (0,0) -- (0,1);
\draw (1,0) -- (1,1);
\end{tikzpicture}\qquad\qquad
\begin{tikzpicture}[scale=0.66,baseline=2mm]
\foreach \x in {(0,0),(0,1),(1,0),(1,1)}
\fill \x circle (3pt);
\draw (0,0) -- (0,1) -- (1,0);
\end{tikzpicture}\\
&\\
&\begin{tikzpicture}[scale=0.66,baseline=2mm]
\foreach \x in {(0,0),(0,1),(1,0),(1,1)}
\fill \x circle (3pt);
\draw (0,0) -- (0,1) -- (1,1);
\end{tikzpicture}\qquad\qquad
\begin{tikzpicture}[scale=0.66,baseline=2mm]
\foreach \x in {(0,0),(0,1),(1,0),(1,1)}
\fill \x circle (3pt);
\draw (0,1) -- (1,1) -- (1,0);
\end{tikzpicture}\qquad\qquad
\begin{tikzpicture}[scale=0.66,baseline=2mm]
\foreach \x in {(0,0),(0,1),(1,0),(1,1)}
\fill \x circle (3pt);
\draw (1,1) -- (1,0) -- (0,1);
\end{tikzpicture}\qquad\qquad
\begin{tikzpicture}[scale=0.66,baseline=2mm]
\foreach \x in {(0,0),(0,1),(1,0),(1,1)}
\fill \x circle (3pt);
\draw (1,0) -- (0,1) -- (1,1);
\end{tikzpicture}\qquad\qquad
\begin{tikzpicture}[scale=0.66,baseline=2mm]
\foreach \x in {(0,0),(0,1),(1,0),(1,1)}
\fill \x circle (3pt);
\draw (0,0) -- (0,1) -- (1,1) -- (1,0);
\end{tikzpicture}\qquad\qquad
\begin{tikzpicture}[scale=0.66,baseline=2mm]
\foreach \x in {(0,0),(0,1),(1,0),(1,1)}
\fill \x circle (3pt);
\draw (0,0) -- (0,1) -- (1,0) -- (1,1);
\end{tikzpicture}\qquad\qquad
\begin{tikzpicture}[scale=0.66,baseline=2mm]
\foreach \x in {(0,0),(0,1),(1,0),(1,1)}
\fill \x circle (3pt);
\draw (0,0) -- (0,1) -- (1,1);
\draw (0,1) -- (1,0);
\end{tikzpicture}
\end{align*}
\end{example}
\bigskip

We now examine special cases of the Formula \eqref{eq:MT}. First, if we set $z_i=1$ for any $i \in \lle 1,n\rre$, then we recover Equation \eqref{eq:MT1}. On the other hand, let us consider the case of the complete graph $K_n$ on $n$ vertices. We introduce an additional variable $z_0$, and we consider the diagonal matrix 
$$\Delta_n=\diag(z_0z_1,z_0z_2,\ldots,z_0z_n).$$
The right-hand side of Equation \eqref{eq:MT} can then be interpreted as follows. A forest $F=T_1\sqcup T_2\sqcup \cdots \sqcup T_k$ on the graph $G=K_n$ together with a choice of roots $i_1 \in T_1,i_2 \in T_2,\ldots,i_k \in T_k$ can be considered as a Cayley tree $T$ of size $n+1$ rooted at $0$: we connect the root $0$ to all the vertices $i_1,i_2,\ldots,i_k$. Note then that with $\delta_i=z_0z_i$,
 $$\left(\prod_{i=1}^n (z_i)^{\deg(i,F)}\right) \left(\prod_{j=1}^k z_0z_{i_j}\right) = \prod_{i=0}^n (z_i)^{\deg(i,T)}.$$
 So, the matrix-tree theorem reads as follows:
 $$\det(\diag(z_0z_1,\ldots,z_0z_n) + L_{K_n}(z)) = \sum_{T \text{ Cayley tree on }\lle 0,n\rre} \left(\prod_{i=0}^n (z_i)^{\deg(i,T)}\right).$$
The right-hand side is the generating series of the degrees of the Cayley trees in size $n+1$. During the proof of Proposition \ref{prop:upper_bound_alpha}, we claimed that this was equal to $(z_0+\cdots+z_n)^{n-1} z_0z_1\cdots z_n$. In order to show this, we compute explicitly the determinant:
\begin{align*}
\det &= \left|\begin{matrix}
z_1 (z_0+z_2+\cdots+z_n) &-z_1z_2 & \cdots &-z_1z_n \\
-z_2z_1 & z_2(z_0+z_1+\cdots+z_n) & \cdots &-z_2z_n \\
\vdots & & &\vdots \\
-z_nz_1 &-z_nz_2 & \cdots & z_n(z_0+z_1+\cdots+z_{n-1})
\end{matrix}\right| \\
&=z_1\cdots z_n \left|\begin{matrix}
z_0+z_2+\cdots+z_n &-z_2 & \cdots &-z_n \\
-z_1 & z_0+z_1+\cdots+z_n & \cdots &-z_n \\
\vdots & & &\vdots \\
-z_1 &-z_2 & \cdots & z_0+z_1+\cdots+z_{n-1}
\end{matrix}\right| \\
&=z_1\cdots z_n\det ((z_0+z_1+\cdots+z_n)I_n - J_n(z_1,z_2,\ldots,z_n)),
\end{align*}
where $J_n(z_1,z_2,\ldots,z_n)$ is the matrix with rank $1$ and with all its entries equal to $z_j$ on the $j$-th column. The spectrum of this matrix is $\{0^{n-1},(z_1+z_2+\cdots+z_n)^1\}$, so we get:
\begin{align*}
\det &= z_1\cdots z_n (z_0+z_1+\cdots+z_n)^{n-1} ((z_0+z_1+\cdots+z_n)-(z_1+\cdots+z_n)) \\&
= z_0z_1\cdots z_n(z_0+z_1+\cdots+z_n)^{n-1}.
\end{align*}
Hence, Equation \eqref{eq:MT3} is proved. Finally, consider again the complete graph on $n$ vertices, and set $z_1=z_2=\cdots=z_n=1$ in the general form \eqref{eq:MT} of the matrix tree theorem. We obtain:
$$\det(\Delta_n + L_{K_n}) = \sum_{k=1}^n \sum_{1\leq i_1<i_2<\cdots<i_k\leq n} \delta_{i_1}\cdots \delta_{i_k}\,C_k(i_1,\ldots,i_k),$$
where $C_k(i_1,\ldots,i_k)$ is the number of families of disjoint trees $T_1,\ldots,T_k$ such that $i_j \in T_j$ for any index $j$, and such that $T_1 \sqcup \cdots \sqcup T_k = F$ is a forest on $K_n$. Since all the vertices play a symmetric role in $K_n$, the number $C_k=C_k(i_1,\ldots,i_k)$ does not depend on the fixed roots $i_1,\ldots,i_k$. Therefore,
$$C_k \times \binom{n}{k} = \text{number of families of $k$ disjoint rooted trees covering a set of $n$ vertices}.$$
Equation \eqref{eq:MT2} states that the right-hand side is equal to $n^{n-k}\,\binom{n-1}{k-1}$. Indeed, if $\Delta_n=x\,I_n$ and $J_n = J_n(1,1,\ldots,1)$, then
$$\det(\Delta_n + L_{K_n}) = \det((x+n)I_n - J_n) = x (x+n)^{n-1} = \sum_{k=1}^n \binom{n-1}{k-1}\,n^{n-k}\,x^k,$$
We have thus recovered Formula \eqref{eq:MT2}, and on the other hand,
$$C_k = \frac{\binom{n-1}{k-1}}{\binom{n}{k}}\,n^{n-k} = k\,n^{n-k-1},$$
so we get the expansion in elementary symmetric functions:
$$\det(\diag(\delta_1,\ldots,\delta_n)+L_{K_n}) =  \sum_{k=1}^n k\,n^{n-k-1}\,e_k(\delta_1,\ldots,\delta_n).$$ 
\bigskip
\bigskip


\bibliography{singular.bib}

\end{document}